\documentclass[a4paper,12pt,oneside]{article}
\usepackage{graphicx}

\usepackage[centertags]{amsmath}
\usepackage{graphicx}
\usepackage{amsfonts}
\usepackage{amssymb}
\usepackage[all]{xy}
\usepackage{color}
\usepackage{enumerate}

\usepackage[utf8]{inputenc}
\usepackage{tikz-cd}
\usepackage{hyperref}
\usepackage{mathtools}
\usepackage{import}

\usepackage{relsize}
\usepackage[all]{xy}
\usepackage{microtype}
\usepackage{xspace}
\usepackage{soul}
\usepackage{xcolor}

\usepackage{geometry}





\def\squarebox#1{\hbox to #1{\hfill\vbox to #1{\vfill}}}
\newcommand{\qed}{\hspace*{\fill}
\vbox{\hrule\hbox{\vrule\squarebox{.667em}\vrule}\hrule}\smallskip}

\newtheorem{theorem}{Theorem}[section]
\newtheorem{lemma}[theorem]{Lemma}
\newtheorem{corollary}[theorem]{Corollary}
\newtheorem{proposition}[theorem]{Proposition}
\newtheorem{definition}[theorem]{Definition}
\newtheorem{example}[theorem]{Example}

\newenvironment{proof}{\noindent {\bf Proof:}}{\hfill $\qed $ \newline}



\newcommand{\R}{{\mathbb R}}
\newcommand{\N}{{\mathbb N}}
\newcommand{\Z}{{\mathbb Z}}

\newcommand{\F}{\mathbb{F}}
\newcommand{\B}{{\mathbb B}}

\newcommand{\simto}{\stackrel{\sim}{\longrightarrow}}
\renewcommand{\o}[1]{{\overline{#1}}}

\renewcommand{\S}{{\mathcal S}}
\renewcommand{\B}{{\mathcal B}}

\DeclareMathOperator{\spa}{span}
\DeclareMathOperator{\diag}{diag}


\newcommand{\ad}{{\rm ad}}
\newcommand{\Ad}{{\rm Ad}}

\newcommand{\Sl}{{\rm Sl}}
\newcommand{\SO}{{\rm SO}}

\newcommand{\cl}{\mathrm{cl}}
\newcommand{\e}{{\rm e}}



\newcommand{\g}{\mathfrak{g}}
\renewcommand{\k}{\mathfrak{k}}
\newcommand{\s}{\mathfrak{s}}
\renewcommand{\a}{\mathfrak{a}}

\newcommand{\m}{\mathfrak{m}}

\newcommand{\n}{\mathfrak{n}}

\newcommand{\p}{\mathfrak{p}}

\renewcommand{\sl}{\mathfrak{sl}}

\newcommand{\wt}[1]{{\widetilde{#1}}}

\newcommand{\la}{\lambda}

\begin{document}
\thispagestyle{empty}

\title{Order of dynamical and control systems \\ on maximal compact subgroups}

\author{Mauro Patrão \and Laércio dos Santos}

\maketitle

 \begin{abstract}
In this paper, we characterize the dynamical orders of minimal Morse components and, partially, of control sets on maximal compact subgroups of a semisimple Lie group as an algebraic order similar to the Bruhat order of the Weyl group. We call this order the \emph{extended Bruhat order of the extended Weyl group (or Weyl-Tits group)}, since it projects onto the Bruhat order and is obtained in an analogous way as the Bruhat order was historically obtained, as the incidence order of the Schubert cells in the maximal flag manifold, but now the Schubert cells in the \emph{maximal extended flag manifold}.
 \end{abstract}

\vskip 0.5cm
\noindent
\textbf{Keywords:} \normalfont{Semisimple Lie groups, Semigroups, Order of minimal Morse components, Order of control sets}\\

\noindent
\textbf{Mathematics Subject Classification:} \normalfont{22E46, 22F30, 37B05, 37B35, 54H15, 93B05}

\section{Introduction}

The dynamics and semigroup actions induced by elements of a connected semisimple Lie group acting on some of its compact homogeneous spaces are extensively studied in the literature. For example, the iteration of a linear isomorphism acting on the projective space is a classical topic, with applications to the study of skew product flows on projective bundles (Selgrade's Theorem, see \cite{sel}). However, some interesting results, such as the normal hyperbolicity of the minimal Morse components on the projective space, were established only recently, using techniques from Lie groups. More generally, the dynamics and semigroup actions of translations of elements of a linear connected semisimple Lie group acting on a generalized flag manifold are reasonably understood, and there are applications of these results that allow us to obtain the full generalization of Selgrade's Theorem to skew product flows on generalized flag bundles (see \cite{psm1,psm2}). This broader context of generalized flag manifolds encompasses other interesting cases such as the classical flag manifolds of real or complex nested subspaces and also symplectic grassmanians, which were extensively studied in the literature (see for example \cite{ammar, ayala, batterson, kleinsteuber, hermann, pss, shub}). Despite this, there are many interesting dynamics of translations on compact homogeneous spaces of a connected semisimple Lie group which are not generalized flag manifolds. For example, spheres or grassmanians of oriented subspaces in the case of the special linear group. But all these examples of dynamics are projections of dynamics of translations on the respective maximal compact subgroups. 

In \cite{ps1}, the results about dynamics of translations on flag manifolds presented in \cite{dkv,fps,ps} are extended to dynamics of translations on maximal compact subgroups, while, in \cite{patrao-santos}, the results about semigroup actions on flag manifolds presented in \cite{sm93,smt} are extended to semigroup actions on maximal compact subgroups. The description of the minimal Morse components of the action of a given arbitrary element and the description of the control sets of the action of a given arbitrary open semigroup on the maximal flag manifold of a semisimple Lie group are given through the elements of its Weyl group, which are used as labels. In this context, the dynamical orders of minimal Morse components and of control sets are given by the Bruhat order of the Weyl group (see \cite{smo}). On the other hand, in the case of maximal compact subgroups of a semisimple Lie group, it is necessary to replace the previous labels by the elements of a bigger finite group which projects onto the Weyl group.

In this paper, we characterize the dynamical orders of minimal Morse components and, partially, of control sets on maximal compact subgroups of a semisimple Lie group as an algebraic order similar to the Bruhat order of the Weyl group. More precisely, let $K$ be a maximal compact subgroup of a connected semisimple Lie group $G$ with finite center. We look at $K$ as the homogeneous space $G/AN$, where $G = KAN$ is the associated Iwasawa decomposition, and hence the action of $G$ on $K$ is given through the natural action of $G$ on $G/AN$. Instead of the Weyl group $W$, we use the finite group $U$ of connected components of the normalizer of $A$ in $K$ in order to label the minimal Morse components and the control sets. This group was called \emph{extended Weyl group} by Tits in \cite{tits} (in fact \emph{Groupe de Weyl étendu} in French). Since this name is already used in the literature to name another class of groups, we can also call $U$ the \emph{Weyl-Tits group of $G$}. We have that $W$ is isomorphic to $U/C$, where
$C$ is the subgroup of $U$ given by the connected components of the centralizer of $A$ in $K$, and hence there exist a natural homomorphism $\pi$ from $U$ to $W$. The algebraic order of $U$ which describes the dynamical orders of minimal Morse components and control sets projects onto the Bruhat order of $W$ and is obtained in an analogous way as the Bruhat order was historically obtained, as the incidence order of the Schubert cells in the maximal flag manifold (see \cite{ehresmann}). We call this order the \emph{extended Bruhat order of $U$}.

The first main result of the present article is the algebraic characterization in $U$ of the incidence order of the Schubert cells in the maximal compact subgroup. A Schubert cell of $K$ is the closure of the unstable manifold of a minimal Morse component of the action of a regular element. Since the minimal Morse components are parametrized by the elements of $U$, the same is true for the Schubert cells, which are denoted by $\S(u)$, where $u \in U$. For $u,v \in U$ we denote $u \leq v$ if and only if  $\S(u) \subset \S(v)$. In \cite{ps2}, the attaching maps introduced in \cite{lonardo} to calculate the cellular homology of the maximal flag manifolds were lifted to calculate the cellular homology of the maximal compact subgroups in the case of split real forms. And this allowed the algebraic characterization in $U$ of the incidence order $\leq$. Here we extend this approach to an arbitrary semisimple Lie group. For each simple root $\alpha$ of $\g$, the Lie algebra of $G$, there exist a suitable element $s_\alpha$ in $U$ such that $\pi(s_\alpha) = r_\alpha$, the element of $W$ given by the reflection about the hyperplane perpendicular to $\alpha$. Since every element of $W$ can be written as a product of elements $r_\alpha$, the elements of $U$ can be written as a product of elements $s_\alpha$ times some element of $C$.

\begin{theorem}\label{theorem1}
Let $u, u' \in U$. Then $u' \leq u$ if and only if for some (or equivalently for each) reduced expression $\pi(u) = r_1 \cdots r_d$ and $u = s_1 \cdots s_dc$, for some $c \in C$, then $u'=s_1^{k_1}\cdots s_d^{k_d}c$, where
\[
k_i=
\left\{
\begin{array}{lcc}
     0 ~or~ 2, & if & i \in \{i_1, \ldots, i_l\} \\
     1, & if & i \notin \{i_1, \ldots, i_l\}
\end{array}
\right.
\]
with $w_0=\pi(u),$ $w_l=\pi(u')$ and $w_k$
is a reduced expression for each $0\leq k \leq l$, where
\[
w_k=\prod_{i \notin \{i_1,\ldots,i_k\}}r_i.
\]
\end{theorem}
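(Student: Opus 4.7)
The plan is to extend the approach of \cite{ps2}, where the analogous statement was proved for split real forms via attaching maps for cellular homology of $K$, to the general semisimple setting. I would organize the proof in three main steps, following the geometry of the Schubert decomposition and then reading off the algebraic condition.

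\textbf{Step 1 (Geometric setup).} First I would describe the Schubert cell $\S(u)$ of dimension $d = \ell(\pi(u))$ as the image of an attaching map $\Phi_u \colon D^{d} \to K$, obtained by lifting the Bott--Samelson/Lonardo parameterization of Schubert cells in the maximal flag manifold $\F = K/M$ to the total space $K$. Concretely, given the lift $u = s_1 \cdots s_d c$ of the reduced expression $\pi(u) = r_1 \cdots r_d$, the map $\Phi_u$ is built as a product of $\SU(2)$- or $\SO(2)$-blocks associated to the $s_i$'s, followed by right multiplication by $c$. The boundary of the cell is then a finite union of images of coordinate subspheres, and a generic point on the $i$-th subsphere has $i$-th factor equal either to the identity (``$k_i = 0$'') or to $s_i^2 \in C$ (``$k_i = 2$''), which is the source of the trichotomy in the statement.

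\textbf{Step 2 (Sufficiency).} Assuming $u' = s_1^{k_1} \cdots s_d^{k_d} c$ with the stated reducedness conditions on the intermediate words $w_k$, I would inductively construct a nested chain $\S(u) \supset \S(v_1) \supset \cdots \supset \S(v_l) = \S(u')$ by peeling off one deleted factor at a time. Each inclusion comes from the attaching map of Step 1 restricted to a single coordinate subsphere. The reducedness of each $w_k$ is exactly what guarantees that $\S(v_k)$ has the expected dimension $d-k$ and that the factor $c$ propagates unchanged, since right multiplication by $C$ preserves the cell decomposition.

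\textbf{Step 3 (Necessity).} For the converse I would induct on the codimension $\ell(\pi(u)) - \ell(\pi(u'))$. The base case of equal length forces $\pi(u') = \pi(u)$ and, together with $u' \leq u$, forces $u' = u$ (i.e.\ all $k_i = 1$). For the inductive step, if $\S(u') \subsetneq \S(u)$, then $\S(u')$ lies in the topological boundary $\partial \S(u)$, which by Step 1 is the union of Schubert cells $\S(v)$ obtained by deleting exactly one $s_i$ such that $r_1 \cdots \wh{r_i} \cdots r_d$ is reduced; this is the geometric incarnation of the Exchange Condition in $W$. Applying the inductive hypothesis to such an intermediate $v$ and concatenating with the chosen deletion at position $i$ yields the required expression for $u'$.

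\textbf{Main obstacle.} The crux of the argument is making Step 1 rigorous in the non-split setting: in the split case, $s_\alpha^2$ generates a $\Z/2$-summand of $C$ coming from a single $\SU(2)$- or $\SO(2)$-block, but for general semisimple $G$ the component group $M/M_0$ can be more intricate and the location of $s_\alpha^2$ inside $C$ must be tracked carefully. Closely related is the claim ``for some (or equivalently for each) reduced expression'', which reduces, via the braid relations in $W$, to showing that two reduced expressions of $\pi(u)$ related by a braid move produce the same element of $U$ after any common pattern of substitutions $s_i \mapsto s_i^{k_i}$; this is essentially a Matsumoto-type theorem for the Weyl--Tits group $U$, and I expect it to be the technical heart of the proof.
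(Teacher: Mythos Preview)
Your overall strategy---lift the Bott--Samelson/Lonardo parameterizations from $\F$ to $K$ (equivalently to $\F_0 = K/M_0$), read off the boundary of $\S(u)$ as a union of codimension-one Schubert cells obtained by replacing one $s_i$ by $s_i^0$ or $s_i^2$, and then induct on length---is exactly the approach the paper takes. But the way you describe Step 1 contains a genuine dimensional error that would make the argument break in the non-split case, and your ``main obstacle'' is misidentified.

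In the general semisimple case the Bruhat cell $\B(u)$ does \emph{not} have real dimension $d=\ell(\pi(u))$; it has dimension $m=m_1+\cdots+m_d$ where $m_i$ is the real dimension of the flag manifold of the rank-one subgroup $G(\alpha_i)$, i.e.\ a sphere $\mathbb{S}^{m_i}$. The relevant rank-one groups are not just $\SU(2)$ or $\SO(2)$ but $\SO(1,n)$, $\SU(1,n)$, $\Sp(1,n)$, and the attaching map is $\Psi_u\colon B^{m_1}\times\cdots\times B^{m_d}\to\F_0$, a product of the rank-one maps $\psi_i\colon B^{m_i}\to K(\alpha_i)$ of the paper's Lemma~3.1. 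Your picture of the $i$-th boundary face as a codimension-one sphere with two distinguished values $1$ and $s_i^2$ is correct \emph{only} when $m_i=1$; when $m_i>1$ the $i$-th face is $\mathbb{S}^{m_i-1}$, the map $\psi_i$ sends the whole of it into $M(\alpha_i)$, and because $M(\alpha_i)$ is then connected (Knapp, Theorem 7.66) one has $s_i^2=1$. So the ``$k_i=0$ or $2$'' dichotomy collapses to a single case precisely when $m_i>1$, and the real work is constructing the higher-dimensional $\psi_i$ (this is the paper's Lemma~3.1, for which it supplies a corrected proof of \cite{lonardo}, Lemma~1.6). Tracking $s_\alpha^2$ inside a complicated $C$ is not the issue.

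Finally, the ``for some (or equivalently for each) reduced expression'' clause does not require a Matsumoto-type theorem for $U$. The relation $u'\leq u$ is \emph{defined} geometrically as $\S(u')\subset\S(u)$, which is independent of any choice of reduced expression; once you prove, for a single reduced expression of $\pi(u)$, that $\partial\B(u)=\bigcup_{i\in S}\S(u_i^0)\cup\S(u_i^1)$ (the paper's Proposition~3.7), the equivalence of ``some'' and ``each'' follows automatically.
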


The second main result of the present article is the algebraic characterization in $U$ of the dynamical order of the minimal Morse components of an arbitrary element $g \in G$ acting on the maximal compact subgroup. These minimal Morse components of $g$ coincide with the minimal Morse components of $h = \exp(H)$, where $H \in \cl \mathfrak{a}^+$, the hyperbolic component of the multiplicative Jordan decomposition of $g$, and are in bijection with the elements of the quotient $U_H\backslash U$, where $U_H$ is the subgroup of the elements of $U$ contained in the connected component of the identity of the centralizer of $H$ in $K$. We denote by $\S^H(u)$ the Schubert cell of the minimal Morse component parametrized by $U_Hu$, where $u \in U$, which is the closure of the respective unstable manifold, and denote $U_Hu \leq_H U_Hv$ if and only if $\S^H(u) \subset \S^H(v)$, which is the inverse of the dynamical order of the minimal Morse components.

\begin{theorem}\label{theorem2}
The inverse of the order of the minimal Morse components of $g \in G$ acting on $K$ is characterized by the following equivalent conditions:
    \begin{enumerate}
        \item $u\leq_H v$.
        \item $\S^H(u) \subset \S^H(v)$.
        \item For every $u' \in U_Hu$, there exists $v'\in U_Hv$ such that $\S(u') \subset \S(v')$.
        \item For every $u' \in U_Hu$, there exists $v'\in U_Hv$ such that $u'\leq v'$.
    \end{enumerate}
\end{theorem}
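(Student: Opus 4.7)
Conditions (1) and (2) are equivalent by the very definition of $\leq_H$ given just before the theorem, while (3) and (4) are equivalent by the definition $u'\leq v'\iff \S(u')\subset \S(v')$ (characterized algebraically by Theorem~\ref{theorem1}). So the substantive content is the equivalence (2) $\Leftrightarrow$ (3), and the plan is to reduce both directions to a single cell-decomposition identity:
\[
\S^H(u)=\bigcup_{u'\in U_Hu}\S(u').
\]

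I would derive this identity dynamically. First, fix a regular element $g_{\reg}\in G$ whose Morse decomposition on $K$ refines that of $h=\exp(H)$, with $g_{\reg}$-Morse components labelled by $u\in U$ and $h$-Morse components labelled by $U_H\backslash U$, the refinement matching the $g_{\reg}$-component at $u'$ with the $h$-component at $U_Hu'$. A point $x\in K$ then has its $h$-backward limit inside $M^H_{U_Hu}$ if and only if its $g_{\reg}$-backward limit lies in one of the $g_{\reg}$-Morse components packed inside $M^H_{U_Hu}$, namely those labelled by $u'\in U_Hu$. Consequently the open $h$-unstable manifold of $M^H_{U_Hu}$ decomposes as the disjoint union of the open $g_{\reg}$-unstable manifolds $C_{u'}$ over $u'\in U_Hu$, and taking closures produces the displayed identity. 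The main technical obstacle lies here: justifying the refinement of Morse components on $K$ and the matching of backward limits, which should rest on the structure theory for translations on maximal compact subgroups developed in \cite{ps1, patrao-santos}.

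Granted the identity, both remaining implications are short. For (3) $\Rightarrow$ (2), union $\S(u')\subset \S(v')$ over $u'\in U_Hu$ to get $\S^H(u)\subset\S^H(v)$. For (2) $\Rightarrow$ (3), fix $u'\in U_Hu$ and observe that
\[
C_{u'}\subset \S(u')\subset \S^H(u)\subset \S^H(v)=\bigcup_{v'\in U_Hv}\S(v').
\]
Each $\S(v')$ is itself a union of the open cells $\{C_w:w\in U\}$, which partition $K$, so the single cell $C_{u'}$ must coincide with one of the cells appearing in some $\S(v')$; that is, $C_{u'}\subset \S(v')$ for some $v'\in U_Hv$. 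Passing to closures gives $\S(u')\subset \S(v')$, which is precisely condition (3).
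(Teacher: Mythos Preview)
Your plan is correct and hinges on the same identity the paper isolates,
\[
\S^H(u)=\bigcup_{u'\in U_Hu}\S(u'),
\]
after which your deductions of (3)$\Rightarrow$(2) and (2)$\Rightarrow$(3) are exactly the right ones (the paper states the theorem as ``almost immediate'' from this identity and does not spell out the cell-partition argument you give for (2)$\Rightarrow$(3), so your version is in fact more explicit there).

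The difference is in how the identity is obtained. You propose to deduce it dynamically, by choosing a regular translation whose Morse decomposition refines that of $h=\exp(H)$ and then matching backward limit sets; you correctly flag this matching as the technical obstacle. The paper instead proves it by a two-line algebraic computation: using the preliminary lemma $K_H^0ub_0=G(H)ub_0$ together with $N=N_HN(H)$ and $N(H)\subset G(H)$, one gets
\[
NK_H^0ub_0=N_HN(H)K_H^0ub_0=N_HK_H^0ub_0=\B^H(u),
\]
so that $\B^H(u)$ is $N$-invariant and hence, since the Bruhat cells $\B(u')=Nu'b_0$ partition $\F_0$, it equals $\bigcup_{u'\in U_Hu}\B(u')$; taking closures of this finite union gives the Schubert identity. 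Your dynamical route can be made rigorous, but doing so amounts to computing $h^{-t}nu'b_0$ via the same splitting $n=n_1n_2\in N_HN(H)$, so you end up using the identical algebraic ingredients in a more roundabout way. The paper's direct approach is shorter and sidesteps any appeal to general facts about refinements of Morse decompositions, which, as you suspected, do not by themselves force unstable sets to decompose compatibly.
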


Our third main result is a partial algebraic characterization in $U$ of the dynamical order of control sets of semigroup $S \subset G$ with nonempty interior acting on the maximal compact subgroup. We also establishes a bijection between the control sets on the maximal compact subgroup and the elements of a quotient of $U$ similar to what happens with control sets on the maximal flag manifold (see \cite{smt, smo}). The control sets are parametrized by the elements of $U$ and denoted by $\Bbb{D}(u)$, where $u \in U$ and the dynamical order of control sets is also denoted by $\leq$.

\begin{theorem}\label{theorem3}
The subset
\[
U(S) = \{u \in U: \Bbb{D}(u) = \Bbb{D}(1)\}
\]
is a subgroup of $U$ and the control sets of $S$ are in bijection with the elements of the quotient $U(S)\backslash U$. Furthermore, if $U(S)u\leq U(S)v$, then $\Bbb{D}(v) \leq \Bbb{D}(u)$, where $U(S)u\leq U(S)v$ if and only if for every $v'$ in $U(S)v$ there exists $u'$ in $U(S)u$ such that $u' \leq v'$. Conversely, if $\Bbb{D}(u) \leq \Bbb{D}(v)$ and $s_1\cdots s_d \in U(S)u$ and $\pi(s_1\cdots s_d)=r_1 \cdots r_d$ is a reduced expression, then $s_1^{k_1}\cdots s_d^{k_d} \in U(S)v$, for some  $k_i\in \{0,1,2,3\}$.
\end{theorem}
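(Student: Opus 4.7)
The plan is to reduce each assertion to the corresponding statement on the maximal flag manifold (call it $\F$) via the $K$-equivariant projection $K \to \F$ and the group homomorphism $\pi : U \to W$, and then lift the conclusions back to $U$. The external inputs I would use are: the parametrization of the control sets of $S$ on $K$ by elements of $U$ from \cite{patrao-santos}; the flag-manifold analogue of this theorem, where control sets on $\F$ are parametrized by $W(S) \backslash W$ with a Bruhat-order characterization, from \cite{smt,smo}; and Theorem \ref{theorem1}, relating the extended Bruhat order on $U$ to inclusion of Schubert cells on $K$.

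For the subgroup property of $U(S)$ and the bijection $U(S)\backslash U \leftrightarrow \{\text{control sets of }S\}$, I would first show that the projection $K \to \F$ sends each $S$-control set on $K$ into a unique $S$-control set on $\F$, yielding a commutative square between the parametrizations $u \mapsto \Bbb{D}(u)$ and $w \mapsto \Bbb{D}_\F(w)$. The fibers of the induced map on control sets are permuted by the right $C$-action on $K$, which commutes with the left $S$-action. Combining this with the known subgroup structure of $W(S) = \{w : \Bbb{D}_\F(w) = \Bbb{D}_\F(1)\}$ and the flag-manifold bijection, I would identify $U(S)$ as a subgroup of $U$ sitting over a subgroup of $W(S)$, and deduce the bijection on $K$ from the bijection on $\F$ together with the parametrization of the fibers.

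For the forward direction of the order (item 3), pick any $v' \in U(S)v$ and use the hypothesis to find $u' \in U(S)u$ with $u' \leq v'$ in the extended Bruhat order; by Theorem \ref{theorem1} this gives $\S(u') \subset \S(v')$. Since $\S(u')$ is the closure of the unstable manifold of a regular element in the interior of $S$, this inclusion yields trajectories under $S$ witnessing $\Bbb{D}(v') \leq \Bbb{D}(u')$ in the control-set order; as $\Bbb{D}(u') = \Bbb{D}(u)$ and $\Bbb{D}(v') = \Bbb{D}(v)$, the conclusion $\Bbb{D}(v) \leq \Bbb{D}(u)$ follows.

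For the partial converse (item 4), assume $\Bbb{D}(u) \leq \Bbb{D}(v)$ and fix $s_1 \cdots s_d \in U(S)u$ with reduced expression $\pi(s_1 \cdots s_d) = r_1 \cdots r_d$. Projecting yields $\Bbb{D}_\F(\pi(u)) \leq \Bbb{D}_\F(\pi(v))$, so the flag-manifold analogue supplies exponents $k_i \in \{0,1,2\}$ with $r_1^{k_1} \cdots r_d^{k_d} \in W(S)\pi(v)$. The lift $s_1^{k_1} \cdots s_d^{k_d}$ then lies in $\pi^{-1}(W(S)\pi(v))$, which agrees with $U(S)v$ only up to a central element $c \in C$. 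Because $s_i^2 \in C$, incrementing a chosen exponent $k_i$ by $2$ modifies the product by a central factor while preserving its image in $W$; by tracking these central contributions one can arrange some lift to land in $U(S)v$ with every $k_i \in \{0,1,2,3\}$. The main obstacle is precisely this last step: showing that the required central correction can always be realized as a product of squares $s_i^2$ drawn from the fixed reduced expression, so that only positions with $k_i \in \{0,1\}$ need be bumped up into $\{2,3\}$. This is the delicate combinatorial-algebraic point where the gap between $U(S)$ and $\pi^{-1}(W(S))$ must be balanced against the braid-type relations among the $s_\alpha$, and is what accounts for the enlarged exponent range $\{0,1,2,3\}$ compared to the range $\{0,1,2\}$ in Theorem \ref{theorem1}.
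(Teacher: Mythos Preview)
Your strategy for the converse direction contains a real gap, and it is the step you yourself flag as ``the main obstacle.''  Projecting $\Bbb D(u)\le\Bbb D(v)$ to $\F$ and invoking \cite{smo} gives you an element $r_1^{k_1}\cdots r_d^{k_d}$ in $W(S)\pi(v)$, hence a lift $s_1^{k_1}\cdots s_d^{k_d}$ landing somewhere in $U(S)vC$.  The problem is that there is no reason the required coset of $C/C(S)$ can be realized as a product of the specific conjugates $(s_1^{k_1}\cdots s_{i-1}^{k_{i-1}})\,s_i^{2}\,(s_1^{k_1}\cdots s_{i-1}^{k_{i-1}})^{-1}$ coming from the fixed reduced word; the $s_i^2$ generate only a subgroup of $C$ which need not surject onto $C/C(S)$, and even when it does, a particular reduced expression may not supply enough of them.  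You have not provided an argument closing this, and I do not see one along the lines you suggest.

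The paper avoids this difficulty entirely by never projecting to $\F$ in the converse.  Instead it argues directly on $K/M_0$, fibering over $K/K(\alpha_d)M_0$: given $\widehat g\in S$ with $\widehat g\,b_{\widehat u}(gB)=b_u(gB)$, the point $\widehat g\,b_{\widehat u s_d^{-1}}(gB)$ lies in the fiber through $b_u(gB)$, where a regular $h\in\lambda(gB)\cap S$ has exactly the fixed points $b_{us_d^j}(gB)$, $j\in\{0,1,2,3\}$, one of them an attractor.  Iterating $h$ then produces $g_{d-1}\in S$ carrying $b_{\widehat u_{d-1}}(gB)$ into some $\Bbb D(us_d^{-k_d})_0$, and induction on $d$ finishes.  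So the exponent range $\{0,1,2,3\}$ arises from the four fixed points in each rank-one fiber of $K/M_0\to K/K(\alpha_i)M_0$, not from a posteriori central corrections.  For the subgroup property the paper also argues directly on $K/M_0$ (showing $b_{uu'}(gB)\in\Bbb D_0$ via the factorization $b_{uu'}(gB)=n\,b_{u'}(g'B)$ and contraction by $h\in S$), rather than by descending to $W(S)$; your sketch for that part is too vague to count as a proof, since knowing $\pi(U(S))\subset W(S)$ does not by itself show $U(S)$ is multiplicatively closed.
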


The structure of the article is the following. In Section 2, we set the notation and recall the necessary definitions and results about semisimple Lie theory, dynamics, and semigroup actions. In Section 3, we prove Theorems \ref{theorem1} and \ref{theorem2}, while, in Section 4, we prove Theorem \ref{theorem3}. We end this introduction with some examples which illustrate our main results.

\begin{example}\label{ex:so3-dinamica}\rm
Let $G = \Sl(3,\R)$ and maximal compact subgroup $K = \SO(3)$, with the other Iwasawa components given by the subgroup $A$ of diagonal matrices with positive diagonal elements and the subgroup $N$ of upper triangular matrices with diagonal elements equal to one. We have that $U$ has 24 elements which are given by permutation matrices with signs and determinant one such as
\[
\begin{pmatrix}
 0 & 1 &  0 \\
 0 & 0 & -1 \\
-1 & 0 &  0
\end{pmatrix}
\]
while
\[
C
=
\left\{
\begin{pmatrix}
1 & 0 & 0 \\
0 & 1 & 0 \\
0 & 0 & 1
\end{pmatrix},
\begin{pmatrix}
-1 & 0 & 0 \\
0 & -1 & 0 \\
0 & 0 & 1
\end{pmatrix},
\begin{pmatrix}
1 & 0 & 0 \\
0 & -1 & 0 \\
0 & 0 & -1
\end{pmatrix},
\begin{pmatrix}
-1 & 0 & 0 \\
0 & 1 & 0 \\
0 & 0 & -1
\end{pmatrix}
\right\}.
\]
Calling
\[
s_1 =
\begin{pmatrix}
1 & 0 & 0 \\
0 & 0 & -1 \\
0 & 1 & 0
\end{pmatrix}
\qquad
\mbox{and}
\qquad
s_2 =
\begin{pmatrix}
0 & -1 & 0 \\
1 & 0  & 0 \\
0 & 0  & 1
\end{pmatrix}
\]
it follows that
\[
C = \{1, ~s_1^2, ~s_2^2, ~s_1^2 s_2^2\}.
\]
The order $\leq$ on $U$ is given in the following diagram (see \cite{ps2}), where $u'\leq u$ if there exists a directed path of arrows starting from $u$ and ending in $u'$.

\begin{center}
\begin{tikzpicture}[node distance={2cm}, thick, main/.style = {draw} ]
\node[main] (1) {$s_1s_2s_1$};
\node[main] (2) [right of=1] {$s_1s_2s_1s_1^2$};
\node[main] (3) [right of=2] {$s_1s_2s_1s_2^2$};
\node[main] (4) [right of=3] {$s_1s_2s_1s_1^2s_2^2$};
\node[main] (5) [below of=1] {$s_1s_2s_2^2$};
\node[main] (6) [below of=2] {$s_1s_2s_1^2s_2^2$};
\node[main] (7) [below of=3] {$s_2s_1$};
\node[main] (8) [below of=4] {$s_2s_1s_1^2$};
\node[main] (9) [below of=5] {$s_1s_2^2$};
\node[main] (10) [below of=6] {$s_1s_1^2s_2^2$};
\node[main] (11) [below of=7] {$s_2$};
\node[main] (12) [below of=8] {$s_2s_1^2$};
\node[main] (13) [below of=9] {$1$};
\node[main] (14) [below of=10] {$s_1^2$};
\node[main] (15) [below of=11] {$s_2^2$};
\node[main] (16) [below of=12] {$s_1^2s_2^2$};
\node[main] (17) [left of=5] {$s_1s_2s_1^2$};
\node[main] (18) [left of=17] {$s_1s_2$};
\node[main] (19) [right of=8] {$s_2s_1s_2^2$};
\node[main] (20) [right of=19] {$s_2s_1s_1^2s_2^2$};
\node[main] (21) [left of=9] {$s_1s_1^2$};
\node[main] (22) [left of=21] {$s_1$};
\node[main] (23) [right of=12] {$s_2s_2^2$};
\node[main] (24) [right of=23] {$s_2s_1^2s_2^2$};

\draw[->] (1) -- (18);
\draw[->] (1) -- (17);
\draw[->] (1) -- (7);
\draw[->] (1) -- (19);
\draw[->] (2) -- (18);
\draw[->] (2) -- (17);
\draw[->] (2) -- (8);
\draw[->] (2) -- (20);
\draw[->] (3) -- (5);
\draw[->] (3) -- (6);
\draw[->] (3) -- (7);
\draw[->] (3) -- (19);
\draw[->] (4) -- (5);
\draw[->] (4) -- (6);
\draw[->] (4) -- (8);
\draw[->] (4) -- (20);
\draw[->] (18) -- (22);
\draw[->] (18) -- (9);
\draw[->] (18) -- (11);
\draw[->] (18) -- (24);
\draw[->] (17) -- (21);
\draw[->] (17) -- (10);
\draw[->] (17) -- (12);
\draw[->] (17) -- (23);
\draw[->] (5) -- (22);
\draw[->] (5) -- (9);
\draw[->] (5) -- (12);
\draw[->] (5) -- (23);
\draw[->] (6) -- (21);
\draw[->] (6) -- (10);
\draw[->] (6) -- (11);
\draw[->] (6) -- (24);
\draw[->] (7) -- (22);
\draw[->] (7) -- (10);
\draw[->] (7) -- (11);
\draw[->] (7) -- (12);
\draw[->] (8) -- (21);
\draw[->] (8) -- (9);
\draw[->] (8) -- (11);
\draw[->] (8) -- (12);
\draw[->] (19) -- (21);
\draw[->] (19) -- (9);
\draw[->] (19) -- (23);
\draw[->] (19) -- (24);
\draw[->] (20) -- (22);
\draw[->] (20) -- (10);
\draw[->] (20) -- (23);
\draw[->] (20) -- (24);
\draw[<-] (13) -- (22);
\draw[<-] (13) -- (21);
\draw[<-] (13) -- (11);
\draw[<-] (13) -- (23);
\draw[<-] (14) -- (22);
\draw[<-] (14) -- (21);
\draw[<-] (14) -- (12);
\draw[<-] (14) -- (24);
\draw[<-] (15) -- (9);
\draw[<-] (15) -- (10);
\draw[<-] (15) -- (11);
\draw[<-] (15) -- (23);
\draw[<-] (16) -- (9);
\draw[<-] (16) -- (10);
\draw[<-] (16) -- (12);
\draw[<-] (16) -- (24);
\end{tikzpicture}
\end{center}

Now consider the dynamics given by the translations on $\SO(3)$ of the following element
\[
g^t
=
\begin{pmatrix}
\e^{2t} & 0 & 0 \\
0 & \e^{-t} & t\e^{-t} \\
0 & 0 & \e^{-t}
\end{pmatrix}.
\]
Its Jordan decomposition is given by $g^t = h^tu^t$, where
\[
h^t
=
\exp(tH)
=
\begin{pmatrix}
\e^{2t} & 0 & 0 \\
0 & \e^{-t} & 0 \\
0 & 0 & \e^{-t}
\end{pmatrix},
\qquad
\mbox{with}
\qquad
H
=
\begin{pmatrix}
2 & 0 & 0 \\
0 & -1 & 0 \\
0 & 0 & -1
\end{pmatrix},
\]
and
\[
u^t
=
\begin{pmatrix}
1 & 0 & 0 \\
0 & 1 & t \\
0 & 0 & 1
\end{pmatrix}.
\]
Since
\[
K_H^0
=
\left\{
\begin{pmatrix}
1 & 0 & 0 \\
0 & \cos \alpha & -\sin \alpha \\
0 & \sin \alpha & \cos \alpha
\end{pmatrix}
: \alpha \in \R
\right\},
\]
which is isomorphic to $\SO(2)$, and $U_H=\{1, s_1, s_1^2, s_1^3\}$, there are 6 minimal Morse components given by $K_H^0u$, for $u \in U_H \backslash U$, all diffeomorphic to the circle. The recurrent set is given by ${\rm fix} (h^t) \cap {\rm fix} (u^t)$, which has 12 elements, two in each minimal Morse component. For example, inside $K_H^0$, the recurrent set is given by $\{1, s_1^2\}$. The six elements of $U_H \backslash U$ are given by
\[
U_H = \{1, s_1, s_1^2, s_1^3\},
\quad
U_Hs_2^2 = \{s_2^2, s_1s_2^2, s_1^2s_2^2, s_1^3s_2^2\}
\]
\[
U_Hs_2 = \{s_2, s_1s_2, s_2s_1^2s_2^2, s_1s_2s_1^2s_2^2\},
\quad
U_Hs_2s_1^2 = \{s_2s_1^2, s_1s_2s_1^2, s_2^3, s_1s_2^3\}
\]
\[
U_Hs_2s_1 = \{s_2s_1, s_1s_2s_1, s_2s_1s_2^2, s_1s_2s_1s_2^2\},
\]
\[
U_Hs_2s_1^3 = \{s_2s_1^3, s_1s_2s_1^3, s_2s_1^3s_2^2, s_1s_2s_1^3s_2^2\}
\]
and the order $\leq $ on $U_H \backslash U$ is given by
\begin{center}
\begin{tikzpicture}[node distance={2cm}, thick, main/.style = {draw} ]
\node[main] (1) {$U_Hs_2s_1$};
\node[main] (2) [right of=1] {$U_Hs_2s_1^3$};
\node[main] (3) [below of=1] {$U_Hs_2$};
\node[main] (4) [right of=3] {$U_Hs_2s_1^2$};
\node[main] (5) [below of=3] {$U_H$};
\node[main] (6) [right of=5] {$U_Hs_2^2$};

\draw[->] (1) -- (3);
\draw[->] (1) -- (4);
\draw[->] (2) -- (3);
\draw[->] (2) -- (4);
\draw[->] (3) -- (5);
\draw[->] (3) -- (6);
\draw[->] (4) -- (5);
\draw[->] (4) -- (6);
\end{tikzpicture}
\end{center}
where $U_Hu'\leq U_Hu$ if there exists a directed path of arrows starting from $U_Hu$ and ending in $U_Hu'$.
\qed
\end{example}

\begin{example}\rm
    Let $G = \Sl(3,\R)$ and maximal compact subgroup $K = \SO(3)$, with the other notation in Example \ref{ex:so3-dinamica}. Let $S = \mathrm{Sl}^+(3,\R)$ be the semigroup of matrices, in $G$, with positive entries. We have that $S$ is an open semigroup of $G$, has 3 control sets on the maximal flag manifold of $\Sl(3,\R)$ and $W(S)=\{1,r_1\}$, where $r_1=\pi(s_1)$ (see \cite{smt}, Example 5.4 and \cite{gfsm}, Corollary 3). Furthermore, $S$ has 6 control sets on $\SO(3)$ and $C(S)=\{1,s_1^2\}$ (see \cite{patrao-santos}, Example 3.28). Thus
    \[
    U(S)=\{1,s_1,s_1^2,s_1^3\}.
    \]
    Moreover, the order on $U(S) \backslash U$ coincides with the order on $U_H \backslash U$, in Example \ref{ex:so3-dinamica}, and the order $\leq $ of the control sets of $S$ on $\SO(3)$ contains the arrows shown in the following diagram
\begin{center}
\begin{tikzpicture}[node distance={2cm}, thick, main/.style = {draw} ]
\node[main] (1) {$\Bbb{D}(s_2s_1)$};
\node[main] (2) [right of=1] {$\Bbb{D}(s_2s_1^3)$};
\node[main] (3) [below of=1] {$\Bbb{D}(s_2)$};
\node[main] (4) [right of=3] {$\Bbb{D}(s_2s_1^2)$};
\node[main] (5) [below of=3] {$\Bbb{D}(1)$};
\node[main] (6) [right of=5] {$\Bbb{D}(s_2^2)$};

\draw[->] (1) -- (3);
\draw[->] (1) -- (4);
\draw[->] (2) -- (3);
\draw[->] (2) -- (4);
\draw[->] (3) -- (5);
\draw[->] (3) -- (6);
\draw[->] (4) -- (5);
\draw[->] (4) -- (6);
\end{tikzpicture}
\end{center}
where $\Bbb{D}'< \Bbb{D}$ if there exists a directed path of arrows starting from $\Bbb{D}'$ and ending in $\Bbb{D}$. It is possible that control sets $\Bbb{D}(s_2)$ and $\Bbb{D}(s_2s_1^2)$ are related, and the same for $\Bbb{D}(s_2s_1)$ and $\Bbb{D}(s_2s_1^3)$.
\qed
\end{example}

\begin{example}\rm
    Let $G=\SO(2,4)_0$ be the connected component of the identity of the group $\SO(2,4)$. A Cartan decomposition of the Lie algebra $\mathfrak{so}(2,4)$ is given by $\mathfrak{so}(2,4)=\mathfrak{k}\oplus\mathfrak{s}$ where $\mathfrak{k}\simeq \mathfrak{so}(2)\oplus \mathfrak{so}(4)$ and
    \[
    \mathfrak{s} 
        =\left\{ 
        \left( 
        \begin{array}{cc} 
        0 & A \\ 
        A^t & 0 \\
        \end{array}
        \right)
        : A ~\text{is a}~ 2\times 4 ~\text{matrix}\right\}.
    \]
    Let $E_{rs}$ be the $6\times 6$ matrix with the only nonzero entry 1 in position $(r,s)$. If $H_1=E_{13}+E_{31}$ and $H_2=E_{24}+E_{42}$, then $\mathfrak{a}=\spa\{H_1,H_2\}$ is a maximal abelian subspace in $\mathfrak{s}$. For each $i \in \{1,2\}$, let $\lambda_i$ be defined by $\lambda_i(a_1H_1+a_2H_2)=a_i$. Thus, $\Sigma=\{\lambda_1-\lambda_2, \lambda_2\}$ is a simple system of roots of $\mathfrak{so}(2,4)$.
    If $r_1$ and $r_2$ denote the orthogonal reflections with respect to $\lambda_1-\lambda_2$ and $\lambda_2$, respectively, then
    \[
    r_1(a_1H_1+a_2H_2)=a_2H_1+a_1H_2 ~~\text{and}~~r_2(a_1H_1+a_2H_2)=a_1H_1-a_2H_2
    \]
    and the Weyl group is given by
    \[
    W=\{1, ~r_1, ~r_2, ~r_1r_2, ~r_2r_1, ~r_1r_2r_1, ~r_2r_1r_2, ~(r_1r_2)^2=(r_2r_1)^2\}. 
    \]
    If $s_1$ and $s_2$ are the block diagonal matrices
    \begin{equation*}
        s_1=\diag\left(k_1,k_1,I_2\right) ~~\text{and}~~ s_2=\diag\left(I_2,k_2,k_2\right)
    \end{equation*}
    where 
    \begin{equation*}
        k_1 =
        \left( 
        \begin{array}{cc} 
        0 & 1 \\ 
        -1 & 0 \\ 
        \end{array} 
        \right),~~
        k_2=
        \left( 
        \begin{array}{cc} 
        1 & 0 \\ 
        0 & -1 \\ 
        \end{array} 
        \right)
        ~\text{ and }~
        I_2=
        \left( 
        \begin{array}{cc} 
        1 & 0 \\ 
        0 & 1 \\ 
        \end{array} 
        \right),
    \end{equation*}
    then $\pi(s_1)=r_1$, $\pi(s_2)=r_2$, $s_1^2=\diag\left(-I_2,-I_2,I_2\right)\in C$ and $s_2^2=I_6$, where $I_6$ is the $6 \times 6$ identity matrix. Thus, $C=\{1,s_1^2\}$ and is contained in the center of $U$, since $C$ is a normal subgroup of $U$. Furthermore, since $(s_1s_2)^2=(s_2s_1)^2$
    it follows that the 16 elements of $U$ are given by
    \[
    1, ~s_1^2, ~s_1, ~s_1^3, ~s_2, ~s_2s_1^2, ~s_1s_2, ~s_1s_2s_1^2, ~s_2s_1, ~s_2s_1^3,
    \]
    \[
    s_1s_2s_1, ~s_1s_2s_1^3, ~s_2s_1s_2, ~s_2s_1s_2s_1^2, ~(s_1s_2)^2, ~(s_1s_2)^2s_1^2
    \]
    and the order $<$ on $U$ is given by
\begin{center}
\begin{tikzpicture}[node distance={2.2cm}, thick, main/.style = {draw} ]
\node[main] (1) {$s_1s_2s_1s_2$};
\node[main] (2) [right of=1] {$s_1s_2s_1s_2s_1^2$};
\node[main] (3) [below of=1] {$s_1s_2s_1s_1^2$};
\node[main] (4) [below of=2] {$s_2s_1s_2$};
\node[main] (5) [below of=3] {$s_1s_2s_1^2$};
\node[main] (6) [below of=4] {$s_2s_1$};
\node[main] (7) [below of=5] {$s_1s_1^2$};
\node[main] (8) [below of=6] {$s_2$};
\node[main] (9) [below of=7] {$1$};
\node[main] (10) [below of=8] {$s_1^2$};
\node[main] (11) [left of=3] {$s_1s_2s_1$};
\node[main] (12) [right of=4] {$s_2s_1s_2s_1^2$};
\node[main] (13) [left of=5] {$s_1s_2$};
\node[main] (14) [right of=6] {$s_2s_1s_1^2$};
\node[main] (15) [left of=7] {$s_1$};
\node[main] (16) [right of=8] {$s_2s_1^2$};

\draw[->] (1) -- (11);
\draw[->] (1) -- (4);
\draw[->] (1) -- (12);
\draw[->] (2) -- (3);
\draw[->] (2) -- (4);
\draw[->] (2) -- (12);
\draw[->] (3) -- (13);
\draw[->] (3) -- (5);
\draw[->] (3) -- (6);
\draw[->] (3) -- (14);
\draw[->] (4) -- (13);
\draw[->] (4) -- (6);
\draw[->] (11) -- (13);
\draw[->] (11) -- (5);
\draw[->] (11) -- (6);
\draw[->] (11) -- (14);
\draw[->] (12) -- (5);
\draw[->] (12) -- (14);
\draw[->] (5) -- (7);
\draw[->] (5) -- (8);
\draw[->] (5) -- (16);
\draw[->] (6) -- (15);
\draw[->] (6) -- (8);
\draw[->] (6) -- (16);
\draw[->] (13) -- (15);
\draw[->] (13) -- (8);
\draw[->] (13) -- (16);
\draw[->] (14) -- (7);
\draw[->] (14) -- (8);
\draw[->] (14) -- (16);
\draw[->] (7) -- (9);
\draw[->] (7) -- (10);
\draw[->] (8) -- (9);
\draw[->] (15) -- (9);
\draw[->] (15) -- (10);
\draw[->] (16) -- (10);
\end{tikzpicture}
\end{center}
where $u'\leq u$ if there is a directed path of arrows starting from $u$ and ending in $u'$.
    \qed
\end{example}

\section{Preliminaries}\label{sec:preliminaries}

In this section we introduce some definitions and results that will be used in this paper. For more details see, for instance, \cite{dkv, helgason, neeb, humphreys, knapp, w} (for Subsection \ref{subsec:lie-theory}), \cite{conley,fps,ps1} (for Subsection \ref{subsec:dynamics}) and \cite{psm1, psm2, patrao-santos, sm93, smt} (for Subsection \ref{subsec:controlsets}).

\subsection{Lie theory}\label{subsec:lie-theory}

\subsubsection{Homogeneous spaces}
\label{homogspaces}

For the theory of Lie groups and its homogeneous spaces we refer to Hilgert-Neeb \cite{neeb} and Knapp \cite{knapp}.  Let $G$ be a real Lie group with Lie algebra $\g$. Let a Lie group $G$ act on a manifold $F$ on the left by the differentiable map
$
G \times F \to F$,  $(g,x) \mapsto gx
$.
Fix a point $x \in F$.  The isotropy subgroup $G_{x}$ is the set of all $g \in G$ such that $gx = x$.  We say that the action is transitive or, equivalently, that $F$ is a homogeneous space of $G$, if $F$ equals the orbit $Gx$ of $x$ (and hence the orbit of every point of $F$).
In this case, the map
$$
G \to F, \qquad g \mapsto g x,
$$
is a submersion onto $F$ which is a differentiable locally trivial principal fiber bundle with structure group the isotropy subgroup $G_{x}$.
Quotienting by $G_{x}$ we get the diffeomorphism
$$
G/G_{x} \simto F, \qquad g G_{x} \mapsto g x.
$$

\subsubsection{Semi-simple Lie theory}\label{semisimple-lie-theory}

For the theory of real semisimple Lie groups and their flag manifolds we refer to Duistermat-Kolk-Varadarajan \cite{dkv}, Hilgert-Neeb \cite{neeb} and Knapp \cite{knapp}.
Let $G$ be a connected real Lie group with a semisimple Lie algebra $\g$, a finite center, and having a complexification.
Fix a Cartan decomposition $\g = \k \oplus \s$ and let $K$ be the connected subgroup with Lie algebra $\k$. We have that $K$ is a maximal compact subgroup of $G$.
Since $\ad(X)$ is skew-symmetric for $X \in \k$, the Cartan inner product is $K$-invariant.
Since $\ad(X)$ is symmetric for $X \in \s$, a maximal abelian subspace $\frak{a} \subset \frak{s}$ can be simultaneously diagonalized so that $\g$ splits as an orthogonal sum of
$$
\g_\alpha = \{ X \in \g:\, \ad(H)X = \alpha(H)X, \, \forall H \in \a \},
$$
where $\alpha \in \a^*$ (the dual of $\a$). We have that $\g_0 = \m \oplus \a$, where $\m$ is the centralizer of $\a$ in $\k$. A root is a functional $\alpha \neq 0$ such that its root space $\g_\alpha \neq 0$. Denoting the set of roots by $\Pi$, we thus have the root space decomposition of $\g$, given by the orthogonal sum
$$
\g = \m \oplus \a \oplus \sum_{\alpha \in \Pi} \g_\alpha.
$$
Fix a Weyl chamber $\frak{a}^{+}\subset \frak{a}$ and let $\Pi^{+}$ be the corresponding positive roots, $\Pi^- = - \Pi^+$ the negative roots and $\Sigma $ the set of simple roots.  Consider the nilpotent subalgebras
\[
\n^\pm = \sum_{\alpha \in \Pi^{\pm}}\frak{g}_{\alpha }
\]
such that
\[
\g = \m \oplus \a \oplus \n \oplus \n^-.
\]

In this paper, we look at $K$ as the homogeneous manifold $G/AN$, denoting by $b$ its base point $AN$, where $G=KAN$ is the associated Iwasawa decomposition of $G$. The natural action of $G$ on $G/AN$ is given by left multiplication (as in Section 10.1 of \cite{neeb}). From the Iwasawa decomposition, it follows that the map $K \to G/AN$ given by $k \mapsto k b$, is a $K$-equivariant diffeomorphism. The isotropy subalgebra of the base $b$ is given by $\a \oplus \n$. We also look at the maximal and the maximal extended flag manifolds of $G$, denoted, respectively, by $\F$ and $\F_0$, as the homogeneous manifolds $G/MAN$ and $G/M_0AN$, diffeomorphic, respectively, to $K/M$ and $K/M_0$, where $M$ is the centralizer of $\a$ in $K$ and $M_0$ denotes its connected component. We denote by $b_\F$ the base point $MAN$ and by $b_0$ the base point $M_0AN$.

The Weyl group $W$ is the finite group generated by the reflections over the root hyperplanes $\alpha=0$ in $\frak{a}$, $\alpha \in \Pi$. The Weyl group $W$ acts on $\frak{a}$ by isometries and alternatively can be described as $W=M_{*}/M$ where $M_{*}$ is the normalizer of $\a$ in $K$. An element $w$ of the Weyl group $W$ can act in $\g$ by taking a representative in $M_*$.  This action normalizes $\a$ and $\m$, permutes the roots $\Pi$ and thus permutes the root spaces $\g_\alpha$, where $w \g_\alpha = \g_{w \alpha}$ does not depend on the representative chosen in $M_*$.

Let $H \in \frak{a}$ and denote the centralizer of $H$ in $G$ and $K$ respectively by $G_{H}$ and $K_H$. We have that
\[
 G_{H} = G_{H}^0M
 \qquad \mbox{and} \qquad
 K_{H} = K_{H}^0M,
\]
where $G_{H}^0$ and $K_{H}^0$ are the connected components of the identity respectively of $G_{H}$ and $K_H$.
Consider the nilpotent subalgebras
\[
\frak{n}^\pm_H = \sum_{\pm \alpha(H) > 0} \g_\alpha,
\]
given by the the sum of the positive/negative eigenspaces of $\ad(H)$ in $\g$ and let $N_H^{\pm}$ be the corresponding connected Lie subgroups. Since $G_H$ leaves invariant each eigenspace of $\ad(H)$, it follows that $\frak{n}^\pm_H$ and $N_H^{\pm}$ are $G_H$-invariant.

For each $\alpha \in \Pi$, let $H_{\alpha} \in \mathfrak{a}$ such that $\langle H_{\alpha},H\rangle=\alpha(H)$, for all $H \in \mathfrak{a}$. Let $H \in \cl \mathfrak{a}^+$ and consider $\mathfrak{a}(H)$ the subalgebra of $\mathfrak{a}$ generated by $H_{\alpha}$ such that $\alpha \in \Sigma$ and $\alpha(H)=0$ and $\mathfrak{g}(H)$ the subalgebra of $\mathfrak{g}$ generated by
\[
\mathfrak{a}(H)\oplus \sum\mathfrak{g}_{\alpha}
\]
where the sum is taken over all $\alpha \in \Pi$ such that $\alpha(H)=0$. Let us also consider $\mathfrak{k}(H)=\mathfrak{k}\cap\mathfrak{g}(H)$ and
\[
\mathfrak{n}(H) = \sum\mathfrak{g}_{\alpha}
\]
where the sum is taken over all $\alpha \in \Pi^+$ such that $\alpha(H)=0$.
Let $G(H)$, $K(H)$ and $N(H)$
be the connected subgroups of $G$ generated by $\mathfrak{g}(H)$, $\mathfrak{k}(H)$ and $\mathfrak{n}(H)$,
respectively. We have that 
\[
N=N_HN(H), \quad N(H)=G_H \cap N, \quad K_H=K(H)M
\]
and
\[ 
G_H=K_HAN(H).
\]

\subsubsection{Canonical objects}\label{subsec:canonicalobjects}

One of the main differences between the theory of dynamics and semigroup actions on flag manifolds developed in \cite{dkv, fps, ps, smt} and the theory of dynamics and semigroup actions on maximal compact subgroups developed in \cite{ps1, patrao-santos} is the following. In flag manifolds, the objects are describe by the Weyl chambers which are in bijection with the classes of $G/MA$, while, in maximal compact subgroups, the objects are describe by the classes of $G/M_0A$, since the connected components of $M$ have a decisive role in these descriptions. 

Let $\g$ be a real semisimple Lie algebra, $\theta$ a Cartan involution of $\g$ and $\g=\k\oplus \s$ the Cartan decomposition of $\g$ defined by $\theta$. Given a maximal abelian subalgebra $\a$ of $\s$, we call $(\theta,\a)$ an \textit{admissible pair} of $\g$. Fixing a Weyl chamber $\a^+ \subset \a$, we call $(\theta,\a,\a^+)$ an \emph{admissible triple}. Let $G$ be a connected Lie group with finite center and Lie algebra $\g$, and $K$ be the connected subgroup of $G$ generated by $\exp(\k)$. An admissible triple $(\theta,\a,\a^+)$ determines the following objects:
$(1)$ the inner product $\langle \cdot, \cdot \rangle_{\theta}$, restricted to $\a$, 
$(2)$ the maximal abelian subgroup $A=\exp\a$, 
$(3)$ the normalizer $M_*A$ and the centralizer $MA$, of $\a$ in $G$, where $M_*$ and $M$ are the normalizer and the centralizer of $\a$ in $K$,
$(4)$ the quotient groups $U=M_*A/M_0A \simeq M_*/M_0$ \label{def:grupoU(B)} and $C=MA/M_0A\simeq M/M_0$, where $M_0$ is the identity component of $M$ and $M_*$, 
$(5)$ the Weyl group $W$, 
$(6)$ the system of roots $\Pi$,  
$(7)$ the simple system of roots $\Sigma$ and the set of positive roots $\Pi^+$, 
$(8)$ the minimal parabolic subalgebra $\p$ and the minimal parabolic subgroup $P$, and 
$(9)$ the components of the Iwasawa decompositions of $\p$ and $P$. Since $G$ has finite center, it follows that $K$, $M_*$ and $M$ are compact subgroups and $U$ and $C$ are finite groups. We have that $M$ is a normal subgroup of $M_*$ and $C$ is a normal subgroup of $U$. 
\textcolor{black}{Let us assume that $G$ has a complexification to guarantee that $C$ is an abelian group (see \cite{knapp}, Theorem 7.53). The above assumptions hold for all linear groups.}

The group $G$ acts on itself by conjugation, on its Lie algebra through adjoint action and on the dual of the algebra by coadjoint action. The following notation will be used for the adjoint action of $G$ on these objects
\begin{enumerate}[$(i)$]
    \item $gwg^{-1} = \Ad(g)w\Ad(g^{-1})$ for $w \in W$,
    \item $g\alpha = \Ad(g)^*\alpha$ for $\alpha \in \Pi$, and
    \item $gX = \Ad(g)X$ for $X \in \g$.
\end{enumerate}

The \emph{sets of maximal abelian subgroups and Weyl chambers} in $G$ are defined, respectively, by
\begin{equation*}
    \mathcal{A} = \{\exp(\a) : (\theta,\a) \,\, \mbox{admissible pair}\}
\end{equation*}
and
\begin{equation*}
    \mathcal{A}^+ = \{\exp(\a^+) : (\theta,\a,\a^+) \,\, \mbox{admissible triple}\}.
\end{equation*}
The group $G$ acts on $\mathcal{A}$ and $\mathcal{A}^+$ by conjugation in such a way that the sets $\mathcal{A}$ and $\mathcal{A}^+$ are identified with the homogeneous spaces $G/M_*A$ and $G/MA$, respectively. Since the subgroup $M_0A$ is the identity component of both $M_*A$ and $MA$ it follows that $M_0A$ is a normal subgroup of $M_*A$ and $MA$. Therefore, the canonical projections
\begin{equation}\label{eq:fibabcam}
    G/M_0A \to G/M_*A ~~~\text{and}~~~ G/M_0A \to G/MA,
\end{equation}
are principal bundles with structure groups $U$ and $C$, respectively. From the projections on (\ref{eq:fibabcam}), each coset $gM_0A$ determines the cosets $gM_*A$ and $gMA$ which, in turn, are identified respectively with the maximal abelian subgroup $gAg^{-1}=\exp g\a$ and with the Weyl chamber $gA^+g^{-1}=\exp g\a^+$. Consider the notation $B=M_0A$. The Weyl chamber determined by $gB$ will be denoted by $\la(gB)$, i.e., $\la(gB)=gA^+g^{-1}=\exp g\a^+$.

\begin{lemma}\label{lema:aplicacoeslambda}
    If $(\theta,\a,\a^+)$ and $(\o{\theta},\o{\a},\o{\a}^+)$ are admissible triple such that $\a^+=\o{\a}^+$, then they determine the same objects $(1)-(9)$ mentioned above. Thus, these objects are determined by each coset $gB$, $g \in G$, and will be denoted by the juxtaposition of $(gB)$ on the right hand side. In addition, given $g \in G$, we have
    \begin{enumerate}[$(i)$]
        \item\label{enum:objconjprimeiro} $\p(g B) = g\p(B)$ and the same for $\a$, $\m$, $\n$, $\Pi$ and $\Sigma$,
        \item $\langle gH, g\widetilde{H} \rangle({gB}) = \langle H, \widetilde{H} \rangle({B})$, for all $ H,      \widetilde{H} \in \a(B)$,
        \item $(M_*A)(gB) = g(M_*A)(B)g^{-1}$ and the same for $A$, $M$, $M_0$ and $N$, and
        \item\label{enum:objconjultimo} $U(gB)=gU(B)g^{-1}$ and the same for $C$ and $W$.
    \end{enumerate}
\end{lemma}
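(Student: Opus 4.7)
My plan is to split the lemma into two parts: first, establishing the well-definedness claim that two admissible triples sharing the positive Weyl chamber $\a^+$ produce the same objects $(1)$--$(9)$; and second, deriving the equivariance formulas (i)--(iv), which become routine once the objects are known to depend only on the coset $gB$.

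For the first part, let $(\theta,\a,\a^+)$ and $(\o{\theta},\o{\a},\o{\a}^+)$ satisfy $\a^+ = \o{\a}^+$; then $\a = \o{\a}$ since $\a$ is the real linear span of $\a^+$. The root system $\Pi$, the positive roots $\Pi^+$, the simple roots $\Sigma$, and the nilpotent subalgebras $\n^{\pm}$ (sums of root spaces determined by $\a$ and $\a^+$) all coincide for the two triples. The subgroup $A=\exp\a$, the centralizer $MA=Z_G(\a)$, the normalizer $M_*A=N_G(\a)$, and their common identity component $M_0A$ are intrinsic to $\a$ and $G$; hence $B$ and the quotients $U$, $C$, $W$ coincide, and the parabolic $\p=Z_\g(\a)\oplus\n$ together with its normalizer $P$ is intrinsic. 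For the inner product on $\a$, the chain $\langle H,H'\rangle_\theta = -B_\g(H,\theta H') = B_\g(H,H')$ (where $B_\g$ denotes the Killing form and the second equality uses $\theta|_\a=-\id$, valid because $\a\subset\s$) shows that the restriction equals $B_\g|_\a$, independently of $\theta$; the same holds with $\o{\theta}$. For the residual $\theta$-dependent items $M$, $M_0$, $\m$, and $N$, I would invoke the conjugacy of Cartan involutions with common $\a$: there exists $b\in Z_G(\a)_0=M_0A$ with $\o{\theta}=\Ad(b)\theta\Ad(b)^{-1}$, so the barred objects are $b$-conjugates of the unbarred ones; since $M_0A$ normalizes $M_0$ (the identity component of a normal subgroup of $MA$), $\m$, and $N$, these conjugations are trivial and the objects coincide.

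For the second part, fix $g\in G$ and let $(\theta,\a,\a^+)$ be an admissible triple representing $B$. Conjugation by $g$ gives $(\Ad(g)\theta\Ad(g)^{-1},\Ad(g)\a,\Ad(g)\a^+)$, which is again admissible and whose associated $M_0A$ is $gM_0Ag^{-1}=gBg^{-1}$; under the identifications in (\ref{eq:fibabcam}) this conjugated triple corresponds to the coset $gB$. Since all canonical items are built functorially from the admissible triple, their values for $gB$ are the images under $\Ad(g)$ of the Lie-algebra items and under conjugation by $g$ of the group items attached to $B$. This yields (i) and (iii) directly, and (iv) follows on the quotients. For (ii), the identity
\begin{equation*}
\langle gH,g\widetilde{H}\rangle_{(gB)}=B_\g(\Ad(g)H,\Ad(g)\widetilde{H})=B_\g(H,\widetilde{H})=\langle H,\widetilde{H}\rangle_{(B)}
\end{equation*}
combines the first part's identification of the inner product with $B_\g|_\a$ and the $\Ad$-invariance of the Killing form.

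The main obstacle is the well-definedness step for the $\theta$-dependent items: a naive reading suggests that $M$, $M_0$, $\m$, or the inner product on $\a$ might vary with the choice of Cartan involution even for fixed $\a^+$. The resolution is twofold: the inner product reduces to the Killing form on $\a$ (which sees no $\theta$), and two Cartan involutions having the same $\a$ differ by conjugation by an element of $M_0A$, which acts trivially on the relevant normal factors. Once well-definedness is in place, the equivariance statements (i)--(iv) follow from the functoriality of the construction under conjugation by $g\in G$, without further delicate analysis.
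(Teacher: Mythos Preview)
The paper does not supply a proof of this lemma: it appears in the preliminaries (Section~\ref{subsec:canonicalobjects}) as a foundational fact, stated without argument. There is therefore no ``paper's own proof'' to compare against.

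Your argument is correct. The two-step structure (well-definedness for fixed $\a^+$, then equivariance under conjugation by $g$) is the natural one, and each step is sound. In particular, your handling of the genuinely $\theta$-dependent pieces $M$, $M_0$, $\m$ is valid: two Cartan involutions with $\a$ in their $(-1)$-eigenspace are indeed conjugate by an element of $(Z_G(\a))_0=M_0A$ (one first gets a conjugating element in $N_G(\a)=M_*A$ and then corrects by an element of $M_*\subset K$, which commutes with $\theta$, to land in $M_0A$), and $M_0A$ normalizes $M$ because $A$ is central in $MA=Z_G(\a)$. A small simplification: $N=\exp(\n)$ with $\n=\sum_{\alpha\in\Pi^+}\g_\alpha$ is already intrinsic to $(\a,\a^+)$ and needs no conjugation argument, so you can drop it from the list of ``$\theta$-dependent'' items. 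Alternatively, one can bypass the conjugation argument for $M$ altogether by observing that $M$ is the unique maximal compact subgroup of $MA=Z_G(\a)$ (since $MA=M\times A$ with $A\cong\R^r$), which is manifestly independent of $\theta$.

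The equivariance part (i)--(iv) is, as you say, a routine consequence of functoriality once well-definedness is in place; your derivation of (ii) via $\langle\cdot,\cdot\rangle_\theta|_\a=B_\g|_\a$ and $\Ad$-invariance of the Killing form is the right argument.
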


Defining
\begin{equation*}\label{eq:preU}
	\mathcal{U} = \{(g'B, u) : g' \in G, u \in U(g'B)\},
\end{equation*}
we have that the adjoint action of $G$ on $\mathcal{U}$, given by
\begin{equation*}
    g(g'B, u) = (g g'B, g u g^{-1})
\end{equation*}
where $g \in G$, is well defined since by Lemma \ref{lema:aplicacoeslambda}, $g u g^{-1} \in U(g g'B)$. The next result is Proposition 2.3 of \cite{patrao-santos}, which shows that the quotient of $\mathcal{U}$ by this action of $G$ is a group isomorphic to the group $U$ introduced in page \pageref{def:grupoU(B)}, called \emph{canonical extended Weyl group} or \emph{canonical Weyl-Tits group of $G$}. Similarly, canonical objects are constructed for other objects determined in Lemma \ref{lema:aplicacoeslambda} by cosets $gB$, $g \in G$. In Subsection \ref{subsec:controlsets} and Section \ref{sec:controlsets}, the canonical Weyl-Tits group will be denoted by $U$ while the group $U$ defined on page \pageref{def:grupoU(B)} will be denoted by $U(B)$, and the same for the other objects. An element of $U$ is an orbit of a pair $(g'B, u)$ denoted by $[(g'B, u)]$. 

\begin{proposition}\label{prop:Ucanonico}
    For each $g \in G$ and $u \in U$, there is a unique $u(gB) \in U(gB)$ such that $u = [(gB, u(gB))]$. For every $g \in G$, the map $u \mapsto u(gB)$ is a canonical isomorphism between the canonical group $U$ and the group $U(gB)$, where for all $u, \widetilde{u} \in U$, the product given by
    \begin{equation*}\label{eq:produtoU}
        u\widetilde{u} = [(gB,u(gB)\widetilde{u}(gB))]
    \end{equation*}
    and the inverse given by
    \begin{equation*}\label{eq:inversaU}
	u^{-1} = [(gB, u(gB)^{-1})]
    \end{equation*}
    are well defined. Furthermore, for all $g,g' \in G$, we have that
    \[
    u(gg'B) = g u(g'B) g^{-1}.
    \]
\end{proposition}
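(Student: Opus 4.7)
The plan is to exploit the transitivity of the $G$-action on $G/B$ together with the normality of the isotropy subgroup $(M_0A)(gB)$ inside $(M_*A)(gB)$. I would first prove existence and uniqueness of $u(gB)$, then deduce the transformation law $u(gg'B) = g u(g'B) g^{-1}$ directly from uniqueness, and finally use this law to check that the product and the inverse on $U$ do not depend on the choice of the representative $gB$.

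For existence, given any representative $(g'B, u')$ of the orbit $u$, transitivity of $G$ on $G/B$ furnishes $h \in G$ with $hg'B = gB$, and Lemma \ref{lema:aplicacoeslambda}(iv) guarantees $hu'h^{-1} \in U(hg'B) = U(gB)$; setting $u(gB) := hu'h^{-1}$ provides a representative of $u$ with first coordinate $gB$. Once uniqueness is in place, the transformation law is immediate: applying $g$ to $(g'B, u(g'B))$ yields the valid representative $(gg'B, g u(g'B) g^{-1})$, which by uniqueness must equal $(gg'B, u(gg'B))$.

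The main obstacle is uniqueness, which amounts to showing that conjugation by the isotropy subgroup acts trivially on $U(gB)$. Suppose $(gB, u_1)$ and $(gB, u_2)$ lie in the same $G$-orbit, so that $hgB = gB$ and $hu_1 h^{-1} = u_2$ for some $h \in G$. The first condition forces $h \in (M_0A)(gB)$. Choosing a representative $m \in (M_*A)(gB)$ of $u_1$, I would write
\[
hmh^{-1} m^{-1} = h \cdot (m h^{-1} m^{-1}),
\]
and normality of $(M_0A)(gB)$ inside $(M_*A)(gB)$ places $m h^{-1} m^{-1}$ in $(M_0A)(gB)$, so the whole product lies in $(M_0A)(gB)$. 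Hence $hmh^{-1}$ represents the same coset as $m$, giving $u_2 = hu_1 h^{-1} = u_1$ in $U(gB)$.

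With existence, uniqueness, and the transformation law established, well-definedness of the product follows: if $g'' = hg$, the transformation law gives
\[
u(g''B)\widetilde{u}(g''B) = h u(gB) h^{-1} \cdot h \widetilde{u}(gB) h^{-1} = h\bigl(u(gB)\widetilde{u}(gB)\bigr) h^{-1},
\]
so $[(g''B, u(g''B)\widetilde{u}(g''B))] = [(gB, u(gB)\widetilde{u}(gB))]$. The same argument handles the inverse. The map $u \mapsto u(gB)$ is then a bijection by existence and uniqueness, and a group isomorphism by the very definition of the product on $U$.
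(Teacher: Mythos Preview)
The paper does not prove this proposition; it simply cites it as Proposition~2.3 of \cite{patrao-santos}. Your argument is correct and self-contained. The crucial step is uniqueness, and your handling of it is exactly right: the isotropy of $gB$ in $G/B$ is $(M_0A)(gB)$, and since the paper records that $M_0A$ is the identity component of $M_*A$ (hence normal), conjugation by any $h \in (M_0A)(gB)$ fixes every coset in $U(gB) = (M_*A)(gB)/(M_0A)(gB)$. Existence via transitivity, the transformation law via uniqueness, and well-definedness of the product and inverse via the transformation law then follow formally, as you outline. Your proof would be a suitable replacement for the citation.
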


If $g \in G$ and $u \in U$, then 
\[
u(gB)gB=gu_*B
\]
where $u(B)=u_*(M_0A)(B)$, $u_*\in M_*$. 

Let $g \in G$, $\alpha \in \Pi(B)$ and $H \in \a(gB)$. From Lemma \ref{lema:aplicacoeslambda}, $g\alpha \in \Pi(gB)$, $g^{-1}H \in \a(B)$ and
\begin{equation*}
    \langle H,H_{g\alpha}\rangle(gB) = (g\alpha)(H)
	=\alpha(g^{-1}H) 
	= \langle g^{-1}H,H_{\alpha}\rangle(B)
	=\langle H,gH_{\alpha}\rangle(gB).
\end{equation*}
From this, we have that $H_{g\alpha}=gH_{\alpha}$ and thus $r_{g\alpha}=gr_{\alpha}g^{-1}$. Therefore, if $w \in W$ and $w(B)=r_{\alpha_1}\cdots r_{\alpha_d}$ is a decomposition of $w(B)$ as a product of reflections, then 
\[
w(gB)=gw(B)g^{-1}=(gr_{\alpha_1}g^{-1})\cdots (gr_{\alpha_d}g^{-1})=r_{g\alpha_1}\cdots r_{g\alpha_d}.
\]

The proof of the next proposition is analogous to that given for Lemma 2.2 and Proposition 2.3 of \cite{patrao-santos}.

\begin{proposition}\label{prop:orderW}
    Let $g \in G$ and $w, w' \in W$. Then
    \begin{enumerate}
        \item $w(B)=r_{\alpha_1}\cdots r_{\alpha_d}$ is a reduced expression of $w(B)$ if and only if $w(gB)=r_{g\alpha_1}\cdots r_{g\alpha_d}$ is a reduced expression of $w(gB)$.
        \item $w'(B) \leq w(B)$ if and only if $w'(gB)\leq w(gB)$.
        \item The relation $\leq$ on the canonical Weyl group $W$, defined by $w'\leq w$ if and only if $w'(gB) \leq w(gB)$, is the Bruhat-Chevalley order.
    \end{enumerate}
\end{proposition}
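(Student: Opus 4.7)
The plan is to exploit the fact that conjugation by $g$ realises an isomorphism of Coxeter systems between $W(B)$ and $W(gB)$, and then to invoke the standard combinatorial characterisation of the Bruhat--Chevalley order. The needed ingredients are already at hand. By Lemma \ref{lema:aplicacoeslambda}\,\ref{enum:objconjultimo} the map
\[
\Phi_g \colon W(B) \longrightarrow W(gB), \qquad w(B) \longmapsto g w(B) g^{-1} = w(gB),
\]
is a group isomorphism; by Lemma \ref{lema:aplicacoeslambda}\,\ref{enum:objconjprimeiro} together with the identity $H_{g\alpha}=gH_\alpha$ (hence $r_{g\alpha}=gr_\alpha g^{-1}$) established just before the statement, $\Phi_g$ carries the set of simple reflections of $(W(B),\Sigma(B))$ bijectively onto that of $(W(gB),\Sigma(gB))$. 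Thus $\Phi_g$ is an isomorphism of Coxeter systems.

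Part (1) would then follow at once, since any Coxeter-system isomorphism preserves the length function: the word $r_{\alpha_1}\cdots r_{\alpha_d}$ is a reduced expression for $w(B)$ if and only if its image $r_{g\alpha_1}\cdots r_{g\alpha_d}$ is a reduced expression for $w(gB)$. For part (2) I would use the subword characterisation of the Bruhat--Chevalley order: $w'(B)\leq w(B)$ if and only if some (equivalently every) reduced expression $w(B)=r_{\alpha_1}\cdots r_{\alpha_d}$ admits indices $i_1<\cdots<i_k$ for which $r_{\alpha_{i_1}}\cdots r_{\alpha_{i_k}}$ is itself a reduced expression equal to $w'(B)$. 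Applying $\Phi_g$ to such a subword factorisation, and using part (1), produces the analogous subword factorisation inside a reduced expression of $w(gB)$ that certifies $w'(gB)\leq w(gB)$; the reverse implication is symmetric, obtained by applying $\Phi_{g^{-1}}$.

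Part (3) is then a formal consequence of part (2): the relation defined on the canonical Weyl group $W$ by $w'\leq w$ whenever $w'(gB)\leq w(gB)$ does not depend on the choice of $g$, so it is well defined. Setting $g=e$ and using the canonical isomorphism $w\mapsto w(B)$ identifies this relation with the Bruhat--Chevalley order of $W(B)$, which is thereby transported to $W$.

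No step is genuinely hard; the whole argument runs parallel to the proofs of Lemma 2.2 and Proposition 2.3 of \cite{patrao-santos}. The one point that deserves attention is the verification that the ``subword of a reduced expression'' criterion transports through $\Phi_g$ in both directions, which is precisely why the argument leans on the equivalence of the ``some/every'' versions of the subword criterion rather than on a single chosen reduced expression.
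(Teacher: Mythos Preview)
Your proposal is correct and follows exactly the route the paper indicates: the paper does not spell out a proof here but simply states that it is ``analogous to that given for Lemma 2.2 and Proposition 2.3 of \cite{patrao-santos}'', and your argument via the Coxeter-system isomorphism $\Phi_g$ and the subword criterion is precisely that analogue. Your explicit observation that $\Phi_g$ carries simple reflections to simple reflections (using $r_{g\alpha}=gr_\alpha g^{-1}$) is the key point, and the rest is the standard combinatorics of Bruhat order transported through this isomorphism.
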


\subsection{Dynamics}\label{subsec:dynamics}

We recall some concepts of topological dynamics (for more
details, see \cite{conley}). Let $\phi :\mathbb{T}\times X \to X$
be a continuous dynamical system on a compact metric space $(X, d)$, with
discrete $\mathbb{T}={\mathbb Z}$ or continuous
$\mathbb{T}={\mathbb R}$ time.
Denote by $\omega(x)$, $\omega^*(x)$, respectively, the forward and
backward omega limit sets of $x$. A Morse decomposition of $\phi^t$ is given by a finite collection of disjoint subsets $
\{{\mathcal M}_{1},\ldots ,{\mathcal M}_{n}\}$ of $X$ such that
\begin{enumerate}[$(i)$]
\item each ${\mathcal M}_i$ is compact and
$\phi\,^t$-invariant,

\item for all $x \in X$ we have $\omega(x),\, \omega^*(x)
\subset \bigcup_i {\mathcal M}_i$,

\item if $\omega(x),\, \omega^*(x) \subset {\mathcal M}_j$
then $x \in {\mathcal M}_j$.
\end{enumerate}
Each element of a decomposition is called a {\em Morse component} and the unstable (or stable) set of a Morse component ${\mathcal M}_i$ is the set ${\rm un}({\mathcal M}_i)$ (or ${\rm st}({\mathcal M}_i)$) of all points whose backward (or forward) omega limit set is contained in ${\mathcal M}_i$. The order between Morse components is given by ${\mathcal M}_i \leq {\mathcal M}_j$ if and only if ${\cl(\rm un}({\mathcal M}_j)) \subset \cl({\rm un}({\mathcal M}_i)$) (or $\cl({\rm st}({\mathcal M}_i)) \subset \cl({\rm st}({\mathcal M}_j))$). The {\em minimal} Morse decomposition is a Morse decomposition which is contained in every other Morse decomposition. Each set ${\mathcal M}_i$ of a minimal Morse decomposition is called a {\em minimal Morse component}. 

In the following theorems, we collect some previous results about the dynamics of a flow $g^t$ of translations of a real semisimple Lie group $G$ acting on its maximal flag manifold $\F \simeq G/MAN $, on its maximal compact subgroup $K \simeq G/AN $, and on its maximal extended flag manifold $\F_0 \simeq G/M_0AN $, where $t \in \mathbb{T}$ with $\mathbb{T} = \Z$ or $\mathbb{T} = \R$. The flow $g^t$ is either given by the iteration of some $g \in G$, when $\mathbb{T} = \Z$, or by $\exp(tX)$, when $\mathbb{T} = \R$, where $X \in \g$, and $g^t$ acts on $\F$, $K$, and $\F_0$ by left translations. 

The usual additive Jordan decomposition writes a matrix as a commuting sum of a semisimple and a nilpotent matrix and we can further decompose the semisimple part as the commuting sum of its imaginary and its real part, where each part commutes with the nilpotent part and the matrix is diagonalizable over the complex numbers if and only if its nilpotent part is zero.
This generalizes to a multiplicative Jordan decomposition of
the flow $g^t$ in the semisimple Lie group $G$ (see Section 2.3 of \cite{fps}), providing us with a commutative decomposition
\[
g^t = e^t h^t u^t.
\]
There exist a Cartan decomposition of $\g$ with a corresponding maximal compact subgroup $K$ and a Weyl chamber $\a^+$ such that the elliptic component $e^t$ lies in $K$, the hyperbolic component is such that $h^t = \exp(tH)$, where $H \in \cl \a^+$, and the unipotent component is such that $u^t = \exp(tN)$, with $N \in \g$ nilpotent. Furthermore, we have that $h^t$, $e^t$ and $u^t$ lie in $G_H$, the centralizer of $H$ in $G$. Finally the hyperbolic component $H$ dictates the minimal Morse components (see Proposition 5.1 and Theorem 5.2 of \cite{fps}).
First we state the dynamical results involving $\F$ presented in \cite{fps}.

\begin{theorem}
Let $g^t$ be a general translation on $\F \simeq G/MAN$ and $g^t = e^t h^t u^t$ be its Jordan decomposition. Then
\begin{enumerate}[$(i)$]
\item The minimal Morse components of $g^t$ are given by
\[
 {\cal M}(g^t,w) = G^0_Hwb_\F = K^0_Hwb_\F.
\]

\item The minimal Morse components of $g^t$ are in bijection with the elements of $W_H \backslash W$, where $W_H$ is the centralizer of $H$ in $W$.

\item The unstable manifold of ${\cal M}(g^t,w)$ is given by
\[
 {\rm un}(g^t,w) = N_H {\cal M}(g^t,w).
\]

\item When $g$ is a regular element of $G$, then
\[
{\cal M}(g^t,w) = wb_\F
\qquad
\mbox{and}
\qquad
{\rm un}(g^t,w) = Nwb_\F =: \B(w)
\]
is a Bruhat cell.
\end{enumerate}
\end{theorem}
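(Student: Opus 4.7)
The plan is to reduce first to the hyperbolic case, then to exploit the Bruhat decomposition refined by the eigenspace decomposition of $\ad(H)$, and finally to identify Morse components with components of the fixed-point set of $h^t$.

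The first step is to show that $g^t$ and $h^t$ have the same minimal Morse decomposition on $\F$. The three components $e^t$, $h^t$, $u^t$ commute and all lie in $G_H$, while $e^t$ lies in the compact group $K$ (so has bounded orbits) and $u^t = \exp(tN)$ grows only polynomially. Thus the exponential rates of attraction/repulsion along $g^t$ are governed entirely by $\Ad(h^t) = \exp(t\,\ad H)$ acting on the tangent bundle of $\F$. A standard perturbation/normal hyperbolicity argument then shows that the Morse sets of $g^t$ and $h^t$ coincide, and one is reduced to analyzing $h^t$.

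Next, I would fix a Weyl chamber containing $H$ and write the Bruhat decomposition $\F = \bigsqcup_{w \in W} N w b_\F$. Each $wb_\F$ is an $h^t$-fixed point, since $w^{-1}h^t w = \exp(t\,w^{-1}H) \in A \subset MAN$. Decomposing $N = N_H N(H)$, where $N(H) = N \cap G_H$ is the part centralized by $H$, and using that $\Ad(h^{-t})$ acts on $\n_H = \sum_{\alpha(H)>0}\g_\alpha$ by $e^{-t\alpha(H)}$ with $\alpha(H) > 0$, one sees that for any $n \in N_H$ and $n' \in N(H)$,
\[
h^{-t}(nn' w b_\F) \;=\; (\Ad(h^{-t})n)\,n'\,w b_\F \;\longrightarrow\; n' w b_\F \quad\text{as } t\to\infty.
\]
Hence every point of $Nwb_\F$ has backward $\omega$-limit inside $N(H)wb_\F \subset G_H wb_\F$. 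Since the $h^t$-fixed-point set equals $\bigcup_w G_H w b_\F$ and $G_H = G_H^0 M$, the connected components of the fixed set are precisely $G_H^0 w b_\F$; because $G_H^0 = K_H^0 A N(H)$ and $A N(H) \subset MAN$ fixes $b_\F$, these components coincide with $K_H^0 w b_\F$, proving $(i)$.

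For $(ii)$, I would show $K_H^0 w b_\F = K_H^0 w' b_\F$ iff $w' \in W_H w$. The inclusion from right to left is immediate because $W_H = (K_H \cap M_*)/M$ acts through representatives in $K_H^0 M$; the converse follows from the transitivity of $K_H^0$ on its Morse component together with the fact that distinct cosets in $W_H\backslash W$ label disjoint Bruhat cells and therefore disjoint fixed-point components. For $(iii)$, one combines the convergence statement above with the $N(H)$-invariance of $G_H^0 w b_\F$: every point of $N_H \mathcal{M}(g^t,w)$ has backward limit inside $\mathcal{M}(g^t,w)$, and conversely the decomposition $\F = \bigsqcup_w N_H G_H^0 w b_\F$ (which follows from Bruhat together with $N = N_H N(H)$ and $N(H) \subset G_H$) exhausts $\F$, so no other points can belong to the unstable manifold. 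Finally $(iv)$ is a direct specialization: if $g$ is regular, then $H$ lies in the open chamber, $G_H = MA$, so $G_H^0 w b_\F = M_0 A w b_\F = w b_\F$; hence $W_H = W$ trivially acts and the minimal components are the $|W|$ isolated fixed points $wb_\F$, whose unstable manifolds $N_H wb_\F = N w b_\F = \B(w)$ are the Bruhat cells. The main delicate point is the reduction of step one, where one must verify that the bounded elliptic factor and the polynomially growing unipotent factor do not create new recurrence and do not perturb the identification of $\omega$-limits with $K_H^0$-orbits; this is handled using the commutativity of $e^t$, $h^t$, $u^t$ and the invariance of the $\ad(H)$-eigenspace decomposition under $G_H$.
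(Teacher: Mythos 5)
The paper does not actually prove this theorem: it is stated in Subsection \ref{subsec:dynamics} as a collection of previously known results, quoted from Proposition 5.1 and Theorem 5.2 of \cite{fps}, so there is no internal proof to compare against. Your sketch follows the same strategy as that reference (reduce to the hyperbolic component, refine the Bruhat decomposition via $N=N_HN(H)$ with $N(H)\subset G_H$, and identify the components of the fixed-point set of $h^t$), and most of the individual steps are sound.

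Two points, however, need repair. First, the opening reduction --- ``a standard perturbation/normal hyperbolicity argument shows that the Morse sets of $g^t$ and $h^t$ coincide'' --- is not a perturbation statement, and it is precisely the hard part of the theorem. The finest Morse decomposition is governed by chain recurrence, so one must show that each $G_H^0wb_\F$ is chain transitive under $g^t$, not merely invariant; in \cite{fps} this is done by observing that $e^tu^t$ restricts to a flow on the flag manifold $G_H^0wb_\F$ of the reductive group $G_H^0$ with trivial hyperbolic part, and applying the theorem recursively to conclude that this restricted flow is chain transitive. Normal hyperbolicity of the components is a separate and later result (\cite{ps}) and is not what makes the two minimal Morse decompositions agree; as written, your step conflates the minimal Morse decomposition of $g^t$ with the fixed-point decomposition of $h^t$. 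Second, two local justifications are off: the identity $G_H^0wb_\F=K_H^0wb_\F$ does not follow from ``$AN(H)\subset MAN$ fixes $b_\F$'', because the relevant isotropy is that of $wb_\F$, not of $b_\F$, and $w^{-1}N(H)w$ need not lie in $MAN$; one should instead note that $G_H^0wb_\F$ is a compact orbit (a flag manifold of $G_H^0$) and hence coincides with the orbit of the maximal compact subgroup $K_H^0$. Likewise, in part $(iv)$ regularity of $H$ gives $W_H=\{1\}$, not ``$W_H=W$''; the conclusion that there are $|W|$ isolated fixed points with Bruhat cells as unstable manifolds is of course correct.
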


Now we state the dynamical results involving $K$ presented in \cite{ps1} and the results involving $\F_0$. Note that the results involving $\F_0$ follow immediately from the results in $K$, since $M_0 \subset K_H^0$.

\begin{theorem}
Let $g^t$ be a general translation on $K \simeq G/AN$ and $g^t = e^t h^t u^t$ be its Jordan decomposition. Then
\begin{enumerate}[$(i)$]
\item The minimal Morse components of $g^t$ are given by
\[
 {\cal M}(g^t,u) = G^0_Hub \cup gG^0_Hub,
\]
where
\[
G^0_Hub = K^0_Hub.
\]
If $g$ is in the connected component of the identity $G_H^0$ of the centralizer of $H$ in $G$, which always happens when $\mathbb{T} = \R$, then ${\cal M}(g^t,u)$ is connected and equal to $G^0_Hub$. If $g \notin G_H^0$, then ${\cal M}(g^t,u)$ has two connected components $G^0_Hub$ and $gG^0_Hub$.

\item The minimal Morse components of $g^t$ are in bijection with the elements of $U_H^g \backslash U$, where
\[
U_H^g := U_H \cup c_gU_H,
\]
for an element $c_g \in C$ such that $G^0_Hc_gub = gG^0_Hub$, and 
\[
U_H := \{u \in U: u \subset K^0_H\}.
\]
The attractors are given by ${\cal M}(g^t,c)$ where $c \in C$.

\item The unstable manifold of ${\cal M}(g^t,u)$ is given by
\[
 {\rm un}(g^t,u) = N_H {\cal M}(g^t,u).
\]
\end{enumerate}
The same results are valid for a general translation $g^t$ on $\F_0 \simeq G/M_0AN$ just replacing $b$ by $b_0$, ${\cal M}$ by ${\cal M}_0$, and ${\rm un}$ by ${\rm un}_0$. In this case, when $g$ is a regular element of $G$, then
\[
{\cal M}_0(g^t,u) = ub_0
\qquad
\mbox{and}
\qquad
{\rm un}_0(g^t,u) = Nub_0 =: \B(u)
\]
is a Bruhat cell.
\end{theorem}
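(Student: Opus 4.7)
The plan is to lift the corresponding theorem on $\F$ recalled above (from \cite{fps}) through the principal $M$-bundle $\pi_\F\colon K\to\F$ associated with the inclusion $AN\subset MAN$, and then project further to $\F_0=K/M_0$ via the intermediate $M_0$-bundle. I would first invoke the multiplicative Jordan decomposition $g^t=e^th^tu^t$: since the factors commute and $e^t,u^t\in G_H^0$ contribute no hyperbolic behaviour, the standard reduction (Proposition 5.1 and Theorem 5.2 of \cite{fps}, carried over to $K$) identifies the minimal Morse components of $g^t$ with the minimal closed $g$-invariant subsets of the recurrent set of $h^t=\exp(tH)$. Hence the analysis splits into (a) describing the $h^t$-components on $K$ and (b) accounting for the permutation induced by $g$.

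For (a), each minimal Morse component $K_H^0 w b_\F$ of $h^t$ on $\F$ has preimage $K_H^0 w_*Mb$ in $K$, for a representative $w_*\in M_*$, and using the identifications $M_*/M_0=U$ and $M/M_0=C\subset U$ this preimage decomposes as the finite disjoint union of the sets $K_H^0 ub$ with $u\in U$ ranging over the $C$-coset $\pi^{-1}(w)$. I would then verify that each $K_H^0 ub$ is compact, connected, $h^t$-invariant (because $h^t\in A$ centralizes $K_H^0$ and fixes $b$ modulo $AN$), and contained in the recurrent set of $h^t$, and prove minimality by a connectedness argument. Two such sets coincide precisely when the corresponding elements of $U$ differ by an element of $U_H=\{u\in U:u\subset K_H^0\}$, yielding the bijection between the minimal Morse components of $h^t$ on $K$ and $U_H\backslash U$.

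For (b), Jordan commutation forces $g\in G_H=G_H^0M$; writing $g=g_0c_g$ with $g_0\in G_H^0$ and $c_g\in M$, the class of $c_g$ in $M/M_0=C\subset U$ is well defined, and left-multiplication by $g$ sends $K_H^0ub$ to $K_H^0c_gub$ by normality of $K_H^0$ in $K_H$ together with $c_g\in M\subset K_H$. Therefore the $g$-orbits of the $h^t$-components are the sets $K_H^0ub\cup K_H^0c_gub$, and these constitute the minimal Morse components of $g^t$; the bijection with $U_H^g\backslash U$ follows directly from the definition $U_H^g=U_H\cup c_gU_H$. The attractors lie over the unique attractor $b_\F$ of $h^t$ on $\F$, whose fibre $Mb$ splits as $\bigcup_{c\in C}K_H^0 cb$, yielding the attractor Morse components ${\cal M}(g^t,c)$, $c\in C$. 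For the unstable manifold I would use the splitting $\n=\n_H\oplus\n(H)$: since $\ad(H)$ expands $\n_H$ and vanishes on $\n(H)$, a standard stable/unstable manifold argument combined with the $G_H$-invariance of ${\cal M}(g^t,u)$ gives $\mathrm{un}(g^t,u)=N_H\,{\cal M}(g^t,u)$.

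The statements for $\F_0=K/M_0$ follow immediately, since $M_0\subset K_H^0$ implies that $K_H^0ub$ already contains its entire $M_0$-fiber; hence it projects injectively to $K_H^0 ub_0$ in $\F_0$, and the same holds for unstable manifolds. When $g$ is regular, $K_H^0=M_0$ and $N_H=N$, so $K_H^0ub_0=\{ub_0\}$ collapses to a single point and its unstable set is the classical Bruhat cell $Nub_0=\B(u)$. The main technical obstacle I anticipate is the careful identification of $c_g$ as a canonical element of $C$ compatible with the $U$-labels of the lifted components, which forces the joint use of the normality of $K_H^0$ in $K_H$, the identification $M/M_0=C\subset U$, and the principal-bundle structure of $G\to G/M_0A$ recalled in Subsection \ref{subsec:canonicalobjects}.
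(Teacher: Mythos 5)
First, a point of comparison: the paper does not prove this theorem. It is recalled in the preliminaries from \cite{ps1}, and the only argument given is the one--line remark that the statements for $\F_0$ follow from those for $K$ because $M_0\subset K_H^0$ --- a remark you reproduce correctly. Judged on its own merits, your sketch has the right architecture (reduce to $h^t$, decompose the fibre of $K\to\F$ over each component, account for the permutation by $g$), but it contains two genuine gaps. The first is the reduction principle you start from: it is \emph{not} true that the minimal Morse components of $g^t$ are ``the minimal closed $g$-invariant subsets of the recurrent set of $h^t$''. Such minimal invariant subsets are orbit closures of single points and are in general much smaller than the Morse components; in Example \ref{ex:so3-dinamica} the recurrent set of $g^t$ has $12$ points while the six minimal Morse components are circles. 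What actually has to be proved is (a) that the sets $G_H^0ub\cup gG_H^0ub$ form a Morse decomposition, i.e.\ that every $\omega$- and $\omega^*$-limit set lands in one of them --- this is precisely where the identity ${\rm un}(g^t,u)=N_H{\cal M}(g^t,u)$ and the covering of $K$ by the sets $N_HK_H^0ub$ must be established, and you defer both to ``a standard stable/unstable manifold argument''; and (b) that each such set is chain transitive under $g^t$, so the decomposition cannot be refined. Your ``connectedness argument'' settles (b) only for $g=h$, where every point of $K_H^0ub$ is fixed; for general $g$ the elliptic and unipotent parts act nontrivially inside the component and chain transitivity is a separate, nontrivial claim.

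The second gap is in your step for the $g$-permutation. To get $gK_H^0ub=K_H^0c_gub$ you write $g=g_0c_g$ with $g_0\in G_H^0$ and then need $g_0K_H^0c_gub=K_H^0c_gub$, i.e.\ the identity $G_H^0u'b=K_H^0u'b$ appearing in the statement itself. You never prove it, and it does not follow formally from $G_H=K_HAN(H)$, since $u_*^{-1}N(H)u_*$ need not lie in $AN$; the paper itself has to invoke Proposition 3.3 of \cite{ps1} for exactly this point in Section 3. Relatedly, for ${\cal M}(g^t,u)=G_H^0ub\cup gG_H^0ub$ to be $g$-invariant the $g$-orbit of $K_H^0ub$ must close up after two steps, i.e.\ $c_g^2\in U_H$; this holds because under the standing complexification hypothesis $C\cong M/M_0$ is an elementary abelian $2$-group, but it must be stated, since otherwise the orbit could a priori run through several components $K_H^0c_g^kub$. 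The remaining parts of your sketch --- the decomposition of the fibre over $K_H^0wb_\F$ into the cells $K_H^0ub$ indexed by the $C$-coset $\pi^{-1}(w)$, the identification of the stabilizer with $U_H$, the passage to $\F_0$, and the regular case --- are correct and consistent with the cited source.
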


\subsection{Semigroup actions}\label{subsec:controlsets}

Let $X$ be a topological space. A \emph{local map} on $X$ is a continuous map $\phi:\mathrm{dom}\phi \to X$, where $\mathrm{dom }\phi \subset X$ is an open set. A \emph{local semigroup on $X$} is a family $S$ of local maps that is closed to the composition of maps, in the following sense: if $\phi, \psi \in S$ and $\phi^{-1}(\mathrm{dom}\psi) \neq \emptyset$ then the composition
\[
\psi\circ \phi:\phi^{-1}(\mathrm{dom}\psi) \to X
\]
also belongs to $S$. A local semigroup $S$ on $X$ acts on $X$ by evaluation of maps. For $x \in X$, we denote its
orbit and its backward orbit, respectively, by $Sx$ and $S^*x$, i.e.,
\[
Sx=\{\phi(x):\phi \in S, x \in \mathrm{dom}\phi\} ~\textrm{ and }~ S^*x=\{y: \exists \phi\in S, \phi(y)=x\}.
\]
The action of $S$ on $X$ induces transitive relations $\preceq$, $\preceq_w$ and $\preceq_s$ on $X$, called respectively \emph{algebraic, weak and strong relations} and defined in the following way. Given $x, y \in X$,
\begin{enumerate}
    \item $x \preceq y$ if and only if $y\in Sx$.
    \item $x \preceq_w y$ if and only if $y\in \mathrm{cl}(Sx)$.
    \item $x \preceq_s y$ if and only if $x\in \mathrm{int}(S^{*}y)$.
\end{enumerate}
In general, given a relation $\leq$ on a set, its \emph{symmetrization} $\sim$ is defined by $x\sim y$ if and only if $x\leq y$ and $y\leq x$. We denote by $\sim$, $\sim_w$ and $\sim_s$ the symmetrizations of $\preceq$, $\preceq_w$ and $\preceq_s$, respectively. It is clear that $\sim$, $\sim_w$ and $\sim_s$ are transitive and symmetric but may fail to be reflexive. 
We also denote by $[x]$, $[x]_w$ and $[x]_s$ the classes of $x$ with respect to $\sim$, $\sim_w$ and $\sim_s$, respectively. We have $[x]_s\subset [x]\subset [x]_w$, for all $x \in X$. It is not difficult to see that $[x]_s \neq \emptyset$ if and only if $x\sim_sx$. In this case, $x$ is said to be \emph{$S$-self-accessible}.

A \emph{control set of $S$} is a  weak class $[x]_w$ of an element $x\in X$ that is $S$-self-accessible. Let $D=[x]_w\subset X$ be a control set of $S$. If $y \in D$ is self-accessible, then $[y]_s=[x]_s$ (see \cite{psm1}, Corollary 4.7), i.e., in each control set there is a unique strong class, called the \emph{transitivity set} of $D$ and denoted by $D_0$. We have
\[
D_0=\{y \in D: y \text{ is self-accessible}\}.
\]
A control set $D$ is called \emph{$S$-invariant} if $Sx\subset D$, for all $x \in D$. Traditionally, in the context of a semigroup $S$ with nonempty interior of a Lie group $G$, a control set $D$ is a weak class such that there exist $x \in D$ and $g$ in the interior of $S$ with $gx = x$, while its transitivity set is given 
\[
D_0 = \{x \in D : gx = x \mbox{ for some } g \mbox{ in the interior of } S \}
\]
and it can be proven that $D_0$ is dense in $D$ (see \cite{smm}). Since this traditional definition of transitivity set depends only on the interior of $S$, without lost of generality, we can assume that we are dealing with open semigroups of Lie groups. And for open semigropus, it follows that the traditional definition of transitivity set coincides with the one previously presented. 

The \emph{order between control sets} is defined in such way that, if $D$ and $D'$ are control sets, then $D\leq D'$ if, and only if, $x\preceq x'$, for some $x\in D_0$ and $x' \in D'_0$. It is well known (see e.g. \cite{psm1}, Proposition 4.11) that $D\leq D'$ if and only if $x\preceq x'$ for all $x \in D_0$ and $x' \in D'_0$.

\subsubsection{Control sets on maximal flag manifolds}

The \emph{maximal flag manifold $\Bbb{F}$ of $\g$} is the set
\begin{equation*}
    \Bbb{F} = \{\p(gB) : g \in G\}
\end{equation*}
of all minimal parabolic subalgebras of $\g$, which is diffeomorphic to the homogeneous space $G/P$. The adjoint action of
$G$ on $\Bbb{F}$, given by $g'\p(gB)$, is well defined, because, by Lemma \ref{lema:aplicacoeslambda}, we have that
\begin{equation*}\label{eqacaoGemFT}
    g'\p(gB) = \p(g' gB).
\end{equation*}

Let $g \in G$. For every $h$ in the Weyl chamber $\la(gB)$, the set of fixed points of the action of $h$ on $\F$ is given by
\[
\{w(gB)\p(gB):w \in W\}.
\]
Furthermore, $\p(gB)$ is the only attractor of $h$. For each $w\in W$ the fixed point $w(gB)\p(gB)$ is denoted by $\p_w(gB)$ and called \emph{fixed point of type $w$ in $\F$ determined by $gB$}. For every $g \in G$ we have
\begin{equation*}\label{eq:gpw}
    g\p_w(B)=\p_w(gB).
\end{equation*}

Let $S\subset G$ be an open semigroup. The semigroup $S$ acts on $\F$ by restricting the action of $G$. The \emph{sets of attractors $\mathrm{atr}(S)$ and fixed points $\mathrm{fix}_w(S)$ of type $w$ of $S$}, in $\F$, are defined by
\[
\mathrm{atr}(S)=\{\p(gB): \la(gB) \cap S \neq \emptyset\}
\]
and
\[
\mathrm{fix}_w(S)=\{\p_w(gB): \la(gB) \cap S \neq \emptyset\}.
\]
In particular, $\mathrm{fix}_1(S)=\mathrm{atr}(S)$.
The following result characterizes the control sets of $S$ on the maximal flag manifold $\F$ of $\g$ (see \cite{sm93, smt}).

\begin{theorem}\label{teo:controleweyl}
    For every $w \in W$, there exists a control set $\Bbb{C}(w)$ on $\F$ such that
    \begin{equation*}
        \Bbb{C}(w)_0 = \mathrm{fix}_w(S)
    \end{equation*}
    and these are all control sets of $S$ on $\Bbb{F}$. There is a unique invariant control set $\Bbb{C} = \Bbb{C}(1)$ and $\Bbb{C}_0 = \mathrm{atr}(S)$. Furthermore, the set
    \[
    W(S)=\{w \in W:\Bbb{C}(w)=\Bbb{C}\}
    \]
    is a subgroup of $W$.
\end{theorem}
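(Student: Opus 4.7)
The plan is to use the multiplicative Jordan decomposition in $G$ together with the open semigroup hypothesis on $S$ to identify each control-set transitivity set on $\F$ explicitly as a set of fixed points of regular elements of $S$, and then to exploit the canonical Weyl-group action on $\F$ to obtain the subgroup property of $W(S)$.

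First I would show that $S$ contains regular elements. Any $g \in S = \mathrm{int}(S)$ Jordan-decomposes as $g = ehu$ with $h = \exp(H)$, $H \in \cl \a^+$; perturbing $g$ inside the open set $S$ pushes $H$ into the open chamber $\a^+$, so $g$ is regular. Consequently the set
\[
\mathrm{at}(S) = \{\la(gB) : \la(gB) \cap S \neq \emptyset\}
\]
is nonempty and open. For any $\la(gB) \in \mathrm{at}(S)$ and any regular $h \in \la(gB) \cap S$, the fixed points of $h$ on $\F$ are exactly $\{\p_w(gB) : w \in W\}$, with $\p(gB)$ the unique attractor, and each $\p_w(gB)$ is $S$-self-accessible because $h \in \mathrm{int}(S)$ fixes it.

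Next I would show all fixed points of type $w$ lie in a single control set $\Bbb{C}(w)$ with $\Bbb{C}(w)_0 = \mathrm{fix}_w(S)$. Given $\la(gB), \la(g'B) \in \mathrm{at}(S)$, openness of $\mathrm{at}(S)$ allows one to connect them by a finite chain of pairwise close chambers in $\mathrm{at}(S)$; on each step, products of regular elements drawn from the two neighboring chambers yield $\sim$-equivalence of the corresponding fixed points of type $w$ via their attracting Bruhat cells. Conversely, any transitivity point is fixed by some $\phi \in \mathrm{int}(S)$ which, after Jordan decomposition and perturbation inside $S$, can be taken regular, so the point equals $\p_w(gB)$ for some $w$ and $\la(gB) \in \mathrm{at}(S)$, showing every control set is $\Bbb{C}(w)$ for some $w$. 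For the invariant control set, any regular $h \in S$ contracts an open dense subset of $\F$ onto its attractor, so every forward orbit accumulates in $\cl(\mathrm{atr}(S))$; this forces $\Bbb{C}(1) = \cl(\mathrm{atr}(S))$ to be the unique $S$-invariant control set, with $\Bbb{C}_0 = \mathrm{atr}(S)$.

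Finally, I would verify that $W(S)$ is a subgroup of $W$. Clearly $1 \in W(S)$, and $W$ being finite reduces closure under inverses to closure under products. For $w_1, w_2 \in W(S)$ and $\la(gB) \in \mathrm{at}(S)$: since $w_1 \in W(S)$, $\p_{w_1}(gB) \in \mathrm{atr}(S) = \Bbb{C}(1)_0$, so there is $g'$ with $\la(g'B) \in \mathrm{at}(S)$ and $\p(g'B) = \p_{w_1}(gB)$; since $w_2 \in W(S)$, also $\p_{w_2}(g'B) \in \Bbb{C}(1)_0$. Using the formula $\p_w(gB) = g\, w(B) P$ together with the conjugation rule $w(g'B) = g'w(B)g'^{-1}$ from Lemma \ref{lema:aplicacoeslambda} and the Bruhat decomposition $G = \bigsqcup_{w \in W} PwP$ to relate the two chambers, one identifies $\p_{w_2}(g'B)$ as a point whose control set is $\Bbb{C}(w_1 w_2)$. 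Hence $\Bbb{C}(w_1 w_2) \cap \Bbb{C}(1)_0 \ne \emptyset$, which forces $\Bbb{C}(w_1 w_2) = \Bbb{C}(1)$ and $w_1 w_2 \in W(S)$. The main obstacles I expect are: (i) in the accessibility step, establishing the $\sim$-equivalence of fixed points of the same type across chambers in $\mathrm{at}(S)$ requires a delicate analysis of the attracting Bruhat cell structure of regular elements combined with the open-semigroup hypothesis; and (ii) the algebraic identification in the subgroup argument, since the representative $g'$ of a chamber whose attractor is $\p_{w_1}(gB)$ is determined only modulo $P$, not modulo $MA$, so reconciling the $W$-multiplication with this ambiguity demands care in the Bruhat-cell calculation.
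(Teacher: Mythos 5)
This theorem is not proved in the paper at all: it is recalled from San Martin \cite{sm93} and San Martin--Tonelli \cite{smt}, so there is no internal proof to match your sketch against. Judged on its own, your reconstruction has the right skeleton (regular elements exist in $\mathrm{int}(S)$; their fixed points of type $w$ are self-accessible; collect them into $\Bbb{C}(w)$; identify $\Bbb{C}(1)$ as the invariant control set; prove the subgroup property), but three steps are genuinely gapped. First, for $w\neq 1$ the fixed point $\p_w(gB)$ is \emph{not} an attractor in $\F$ --- its stable set is a lower-dimensional Bruhat cell --- so "$\sim$-equivalence via their attracting Bruhat cells" across nearby chambers does not go through; the standard mechanism is instead contraction in the chamber manifold $G/MA$ (powers $h^n$ of a regular $h\in\la(gB)\cap S$ send an open dense set of chambers to $\la(gB)$, carrying \emph{all} types of fixed points simultaneously, after first putting $\la(g'B)$ in general position with an element of $\mathrm{int}(S)$). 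Second, in the converse direction, perturbing a non-regular $\phi\in\mathrm{int}(S)$ to a regular element does not preserve the fixed point $x$ unless the perturbation is taken inside $\mathrm{int}(S)\cap P_x$, where $P_x$ is the minimal parabolic fixing $x$; as written, a generic perturbation loses $x$, so "the point equals $\p_w(gB)$" is unjustified.

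Third, for the subgroup property your obstacle (ii) is real but the Bruhat decomposition $G=\bigsqcup_w PwP$ is not the tool that resolves it. The resolution --- which the paper does spell out in its own analogue, Theorem \ref{teoU(S)subgrupo} for $U(S)$ --- is that two chambers in $\mathrm{at}(S)$ with the same attractor differ only by an element $n$ of the nilpotent group $N(g'B)$ (the analogue of the last clause of Lemma \ref{lema:fixtipou}), so that $\p_{w_1w_2}(gB)=n\,\p_{w_2}(g'B)$; one then kills $n$ by conjugating with large powers of a regular $h\in\la(g'B)\cap S$, using $h^{-k}nh^{k}\to 1$ (Proposition \ref{prop:fixtipou}) together with openness and $S$-invariance of $\Bbb{C}_0$. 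If you replace your Bruhat-cell bookkeeping by this $N$-contraction argument (which also powers Lemma \ref{lema:ordercontrolN} and Lemma \ref{lemma:contrclasseparab} later in the paper), the subgroup step closes; without it, the ambiguity "$g'$ determined only modulo $P$" that you flag remains unresolved.
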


\subsubsection{Control sets on maximal compact subgroups}\label{sec:controlsetK}

In this subsection we collect some results about control sets on $\Bbb{F}_0\simeq K/M_0$. Note that the results for $\Bbb{F}_0$ follow directly from the results for $K$, presented in \cite{patrao-santos}. The first proposition gives a relationship between control sets of $S$ on $K/M_0$ with the control sets of $S$ on the maximal flag manifold $K/M$ (see \cite{patrao-santos}, Propositions 3.4 and 3.14 and Theorem 3.6).

\begin{proposition}\label{prop:S-ics}
    Let $w \in W$ and $\Bbb{D}\subset \pi^{-1}(\Bbb{C}(w))$ be a control set on $K/M_0$.
    \begin{enumerate}
        \item $\Bbb{D}$ is $S$-invariant if and only if $w \in W(S)$.
        \item If $\Bbb{D}'\subset \pi^{-1}(\Bbb{C}(w))$ is a control set, then there exists $c \in C$ such that $\Bbb{D}'=\Bbb{D}c$.
    \end{enumerate}
\end{proposition}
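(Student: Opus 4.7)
The plan is to exploit the fact that the right action of $C \simeq M/M_0$ on $K/M_0$ commutes with the left $G$-action, so for any control set $\Bbb{D}$ on $K/M_0$ and any $c \in C$, the translate $\Bbb{D}c$ is again a control set lying above the same $\Bbb{C}(w)$ in $K/M$. Since right translation by $c$ is a homeomorphism commuting with $S$, it induces an order-preserving bijection of the set of control sets on $K/M_0$, and these two observations drive both parts of the proof.

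I would first prove part (2). Pick $x \in \Bbb{D}_0$ and $x' \in \Bbb{D}'_0$, so that $\pi(x), \pi(x') \in \Bbb{C}(w)_0$. Since the transitivity set is a strong class, there exist $g, g' \in S$ with $g\pi(x) = \pi(x')$ and $g'\pi(x') = \pi(x)$. The fibers of $\pi$ being $C$-orbits, lifting yields $gx = x'c$ and $g'x' = xc'$ for some $c, c' \in C$. The first equality shows $x$ reaches $x'c$, i.e. $\Bbb{D} \leq \Bbb{D}'c$, equivalently $\Bbb{D}c^{-1} \leq \Bbb{D}'$, and symmetrically $\Bbb{D}' \leq \Bbb{D}c'$. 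Chaining, $\Bbb{D} \leq \Bbb{D}c'c$; iterating and using that $C$ is finite so that $c'c$ has finite order, antisymmetry of the order on control sets collapses the chain, yielding $\Bbb{D}c'c = \Bbb{D}$ and hence $\Bbb{D}' = \Bbb{D}c^{-1}$.

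For part (1), the forward direction uses that if $\Bbb{D}$ is $S$-invariant and $x \in \Bbb{D}_0$, then $\pi(x)$ inherits self-accessibility from $x$, so $\pi(x) \in \Bbb{C}(w)_0$ and $S\pi(x) = \pi(Sx) \subset \pi(\Bbb{D}) \subset \Bbb{C}(w)$; this traps a transitivity point's forward orbit in $\Bbb{C}(w)$, forcing $\Bbb{C}(w)$ to be maximal in the order of control sets on $K/M$, and combined with Theorem \ref{teo:controleweyl} characterizing $\Bbb{C}$ as the unique $S$-invariant control set, this gives $\Bbb{C}(w) = \Bbb{C}$, i.e. $w \in W(S)$. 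For the converse, assume $w \in W(S)$ but, for contradiction, that $\Bbb{D}$ is not $S$-invariant; then some $g \in S$ sends a point of $\Bbb{D}$ into a distinct control set, which by part (2) equals $\Bbb{D}c$ for some $c$ not stabilizing $\Bbb{D}$, so $\Bbb{D} < \Bbb{D}c$. Because the $C$-action preserves the order and $c$ has finite order $k$, one gets the strict cycle $\Bbb{D} < \Bbb{D}c < \Bbb{D}c^2 < \cdots < \Bbb{D}c^k = \Bbb{D}$, contradicting antisymmetry.

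The main obstacle is the lifting step in part (2): ensuring that strong accessibility of $\pi(x), \pi(x')$ in the base control set produces well-defined elements of $C$ upstairs, and then extracting the equality $\Bbb{D}' = \Bbb{D}c^{-1}$ from two opposite inequalities. A single comparison only yields $\Bbb{D} \leq \Bbb{D}(c'c)$, and promoting this to an equality requires iterating the $C$-action to exploit both the finiteness of $C$ and the antisymmetry of the order on control sets; without the commutativity of $C$ with $S$ at every step the argument would break down.
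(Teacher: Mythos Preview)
The paper does not actually prove this proposition: it is listed among the preliminaries with the parenthetical ``see \cite{patrao-santos}, Propositions 3.4 and 3.14 and Theorem 3.6,'' so there is no in-paper proof to compare against. Your argument is a reasonable self-contained substitute and the overall strategy---exploiting that the right $C$-action commutes with the left $S$-action, together with finiteness of $C$ and antisymmetry of the control-set order---is exactly the right mechanism.

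Two points deserve tightening. In the converse direction of (1) you write that ``some $g \in S$ sends a point of $\Bbb{D}$ into a distinct control set.'' Failure of $S$-invariance only gives $gx \notin \Bbb{D}$ for some $x \in \Bbb{D}$, $g \in S$; the point $gx$ need not lie in any control set. What you actually need is the standard fact that on a compact space a non-invariant control set is not maximal, so there exists a control set $\Bbb{D}''$ with $\Bbb{D} < \Bbb{D}''$. You then must check $\Bbb{D}'' \subset \pi^{-1}(\Bbb{C}(w))$ before invoking part (2): take $x \in \Bbb{D}_0$, $x'' \in \Bbb{D}''_0$ with $x'' = gx$; since $w \in W(S)$ the set $\Bbb{C}(w)=\Bbb{C}$ is $S$-invariant, so $\pi(x'') = g\pi(x) \in \Bbb{C}(w)$, and self-accessibility of $x''$ descends to $\pi(x'')$, giving $\pi(\Bbb{D}'') \subset \Bbb{C}(w)$. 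Only then does part (2) yield $\Bbb{D}'' = \Bbb{D}c$ and your cycle argument applies.

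A smaller point in part (2): the assertion $\pi(x),\pi(x') \in \Bbb{C}(w)_0$ is correct but not automatic from $\Bbb{D} \subset \pi^{-1}(\Bbb{C}(w))$; it uses that self-accessibility of $x$ (fixed by an interior element of $S$) passes to $\pi(x)$ under the equivariant map, so that $\pi(x)$ is a self-accessible point of $\Bbb{C}(w)$ and hence lies in its transitivity set. With these two clarifications your proof goes through.
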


In order to relate the control sets of $S$ to their fixed points, we give below some notations and results. In this sense, the next proposition (see \cite{patrao-santos}, Proposition 3.9 and \cite{ps1}, Theorem 3.9) describes the set of fixed points and the attractors of elements of a Weyl chamber.

\begin{proposition}\label{prop:fixtipou}
    Let $g \in G$. For every $h$ in the Weyl chamber $\la(gB)$, the set of fixed points of the action of $h$ on $K/M_0$ is given by
    $
    \{u(gB)gb_0:u \in U\}
    $
    and the attractors are given by $c(gB)gb_0$, $c \in C$. Furthermore, if $h \in \la(gB)$ and $n \in N(gB)$, then $h^{-k}nh^k \to 1$ when $k \to \infty$.
\end{proposition}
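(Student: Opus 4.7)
The plan is to reduce everything to the base case $g=1$ by conjugation, then analyze the fixed points and attractors in $K/M_0 \simeq G/M_0AN$ by lifting the corresponding known results from the maximal flag manifold $K/M \simeq G/MAN$ via the principal $C$-bundle $\pi: G/M_0AN \to G/MAN$.

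First I would treat the base case $g=1$, so that $h \in \lambda(B) = A^+ = \exp(\a^+)$, say $h=\exp(H)$ with $H \in \a^+$. Writing $y = k b_0$, the equation $h y = y$ reads $k^{-1} h k \in M_0AN$; projecting through $\pi$, a necessary condition is that $\pi(y)$ is fixed in $G/MAN$, and the fixed points there are classically given by $\{w b_\F : w \in W\}$. Since $h \in A \subset M_0 AN$ and $M_0 A$ is normalized by the canonical representatives of $W$ and commutes with $M$, the whole fiber $\pi^{-1}(w b_\F) = \{c w b_0 : c \in C\}$ is fixed by $h$; using the identification $U/C \simeq W$ and Proposition \ref{prop:Ucanonico}, these fibers are exactly $\{u(B) b_0 : u \in U\}$, which gives the fixed-point description.

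Next, for the attractors in the base case: on $G/MAN$ the unique attractor of $h$ is $b_\F$, with open basin $\Omega \subset G/MAN$ (standard consequence of the Bruhat decomposition, coming from $\Omega = N b_\F$). Pulling back along the $C$-covering $\pi$, we get that $\pi^{-1}(\Omega)$ is open and consists of the disjoint union of the $|C|$ open basins of the fixed points $c b_0$, $c \in C$. Because $h$ acts trivially on fibers of $\pi$ and contracts $\Omega$ onto $b_\F$, each $c b_0$ attracts its entire sheet $c \cdot \pi^{-1}(\Omega)_1$, so all $|C|$ of these are attractors, and no other $u b_0$ can be an attractor (its image $w b_\F$ in $G/MAN$ would have to be an attractor there). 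Now to pass to general $g$, I use that the left translation action of $G$ on $G/M_0AN$ intertwines the actions of $h$ and $g^{-1}hg$: $y$ is fixed (resp.\ attractor) of $h$ iff $g^{-1}y$ is fixed (resp.\ attractor) of $g^{-1}hg \in \lambda(B) = A^+$. Combined with the base case and the canonical relation $u(gB) = g u(B) g^{-1}$ from Proposition \ref{prop:Ucanonico}, this yields $u(gB) g b_0 = g u(B) b_0$, giving exactly the asserted fixed-point set $\{u(gB) g b_0 : u \in U\}$ and attractor set $\{c(gB) g b_0 : c \in C\}$.

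Finally, for the nilpotent contraction, I exploit $N(gB) = g N(B) g^{-1}$ from Lemma \ref{lema:aplicacoeslambda}. Writing $n = g n_0 g^{-1}$ with $n_0 \in N(B)$, we get
\[
h^{-k} n h^k = g (g^{-1}h g)^{-k} n_0 (g^{-1}h g)^k g^{-1},
\]
and since $h_0 := g^{-1}hg \in A^+$, the root-space decomposition $\n = \bigoplus_{\alpha \in \Pi^+} \g_\alpha$ with $\Ad(h_0^{-1})|_{\g_\alpha} = e^{-\alpha(H_0)}\id$ and $\alpha(H_0)>0$ forces $\Ad(h_0^{-k}) X \to 0$ for every $X \in \n$; hence $h_0^{-k} n_0 h_0^k = \exp(\Ad(h_0^{-k}) X) \to 1$ and the claim follows by continuity of conjugation by $g$. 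The step I expect to require the most care is the separation of the $|C|$ attractors from the remaining fixed points on $K/M_0$: it is precisely here that the covering-space structure of $\pi$, together with triviality of the $h$-action on the fiber, has to be used to upgrade the single attractor on $K/M$ into the full collection $\{c(gB) g b_0 : c \in C\}$.
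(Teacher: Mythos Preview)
The paper does not give its own proof of this proposition: it appears in the preliminaries (Subsection~\ref{sec:controlsetK}) with a pointer to \cite{patrao-santos}, Proposition~3.9, and \cite{ps1}, Theorem~3.9. Your sketch is therefore not comparable to anything in the present paper, but it is a correct and natural direct argument --- lifting the classical fixed-point/attractor picture on $G/MAN$ through the principal $C$-bundle $\pi:G/M_0AN\to G/MAN$ and reducing to $g=1$ by conjugation, exactly as the canonical framework of Subsection~\ref{subsec:canonicalobjects} is set up to allow.

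One slip to fix: the open basin of the unique attractor $b_\F$ on $G/MAN$ is $\Omega=N^-b_\F$ (the open Bruhat cell), not $Nb_\F$; the latter is just $\{b_\F\}$ since $N\subset MAN$. With $N^-$ in place of $N$ your covering argument goes through cleanly: $N^-b_\F\cong N^-$ is contractible, so $\pi^{-1}(\Omega)$ is a disjoint union of $|C|$ open sheets, each $h$-invariant (because $h$ fixes the point $cb_0$ lying in it, or equivalently because the right $C$-action commutes with left translation by $h$) and contracting onto that point. The nilpotent-contraction statement and the reduction to general $g$ by conjugation are both correct as written.
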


This proposition leads to the following definition.

\begin{definition}\label{def:pontofixou}
    For each $u \in U$ and $g \in G$, define the \emph{fixed point of the type $u$} in $\Bbb{F}_0\simeq G/M_0AN$ determined by $gB$ as
    \begin{equation*}\label{eq:bu}
        b_u(gB) = u(gB)gb_0 = gu_*b_0
    \end{equation*}
    where $u(B)=u_*(M_0A)(B)$.
\end{definition}

The element $b_1(gB)$ is denoted by $b(gB)$, where $1$ is the identity of $U$. The following result is part of Lemma 3.12 of \cite{patrao-santos}.

\begin{lemma}\label{lema:fixtipou}
    Let $g, g' \in G$. For all $u, u' \in U$, we have
    \begin{enumerate}
	\item $b_u(gg'B) = gb_u(g'B)$, and
	\item $b_{u'}(u(gB)gB) = b_{uu'}(gB)$.
    \end{enumerate}
    Furthermore, $b(g'B)=b(gB)$ if and only if $g'B=ngB$, for some $n \in N(gB)$.
\end{lemma}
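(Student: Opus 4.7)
The plan is to unpack the definition $b_u(gB)=u(gB)gb_0 = gu_*b_0$ and repeatedly apply the two key facts from Proposition \ref{prop:Ucanonico}: the conjugation rule $u(gg'B)=gu(g'B)g^{-1}$ and the identity $u(gB)gB=gu_*B$ where $u(B)=u_*(M_0A)(B)$ with $u_*\in M_*$. Throughout, I will use that $b_0=M_0AN$ is a base point whose stabilizer in $G$ is $M_0AN$, and that $u_*b_0$ depends only on the class $u_*M_0$ (equivalently on $u_*$ modulo $M_0A$), so that $u(B)b_0=u_*b_0$ is unambiguous.

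For part (1), I compute directly, using the conjugation rule with $g$ and $g'$:
\[
b_u(gg'B)=u(gg'B)\,gg'\,b_0=\bigl(gu(g'B)g^{-1}\bigr)gg'\,b_0=g\,u(g'B)\,g'\,b_0=g\,b_u(g'B).
\]
For part (2), I start by rewriting the argument of $b_{u'}$ via Proposition \ref{prop:Ucanonico}: $u(gB)gB=gu_*B$. Applying part (1) with $g'=u_*$ yields $b_{u'}(gu_*B)=g\,b_{u'}(u_*B)$. Next, $b_{u'}(u_*B)=u'(u_*B)\,u_*\,b_0=u_*u'(B)u_*^{-1}u_*b_0=u_*u'(B)b_0=u_*u'_*b_0$, where $u'(B)=u'_*(M_0A)(B)$. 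Finally, the product formula for $U$ in Proposition \ref{prop:Ucanonico} gives $(uu')_*=u_*u'_*$ modulo $M_0A$, so
\[
b_{u'}(u(gB)gB)=g\,u_*u'_*\,b_0=g(uu')_*b_0=b_{uu'}(gB),
\]
as required.

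For the "Furthermore" part, note that taking $u=1$ gives $1_*\in M_0A$ and therefore $b(gB)=gb_0$. Hence $b(g'B)=b(gB)$ if and only if $g^{-1}g'$ lies in the isotropy of $b_0$, which is $M_0AN$, i.e., $g'\in g M_0AN$. On the other hand, by Lemma \ref{lema:aplicacoeslambda}(iii) one has $N(gB)=gNg^{-1}$, so the condition $g'B=ngB$ for some $n\in N(gB)$ becomes $g'\in gNg^{-1}\cdot g B=gN\cdot M_0A$. Since $M_0A$ normalizes $N$, we have $NM_0A=M_0AN$, and the two conditions coincide.

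The steps above are essentially algebraic manipulations, so there is no serious obstacle; the only care needed is the bookkeeping between the canonical element $u\in U$, its representative $u(gB)\in U(gB)$, the representative $u_*\in M_*$, and the ambiguity modulo $M_0A$ in each identification — in particular, verifying that the claimed equalities of the form $u(B)b_0=u_*b_0$ and $(uu')_*b_0=u_*u'_*b_0$ are independent of the chosen representatives, which follows from the fact that $M_0A$ is contained in the stabilizer $M_0AN$ of $b_0$.
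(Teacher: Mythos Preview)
Your argument is correct. The paper does not actually prove this lemma but cites Lemma~3.12 of \cite{patrao-santos}, so there is no in-paper proof to compare against; your direct computation from Definition~\ref{def:pontofixou} together with the conjugation rule and product formula of Proposition~\ref{prop:Ucanonico}, and the identification $b(gB)=gb_0$ with $\mathrm{Stab}(b_0)=M_0AN=N(gB)\cdot B$, is exactly the natural verification and goes through cleanly.
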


Let $\la_0$ be a fixed Weyl chamber in $G$ such that $S \cap \la_0 \neq \emptyset$. Let $\Bbb{D}$ be the invariant control set such that
\[
b(B)=b_0 \in \Bbb{D}_0,
\]
by Proposition 3.19 of \cite{patrao-santos}.
The \emph{set of cosets determined by $S$ and $\Bbb{D}$, $\mathcal{B}(S)$}, and the set of \emph{fixed points $\mathrm{fix}_u(S)$ of $S$ of type $u\in U$}, in $\Bbb{F}_0$, are defined by
\[
\mathcal{B}(S)=\{gB:\la(gB) \cap S \neq \emptyset \quad\mbox{and}\quad b(gB) \in \Bbb{D}_0\},
\]
and
\begin{equation*}\label{eq:fixSuc}
    \mathrm{fix}_u(S)=\{b_{u}(gB): gB \in \mathcal{B}(S)\}.
\end{equation*}

The following result characterizes the control sets of $S$ on $\Bbb{F}_0$ in terms of their fixed points (see \cite{patrao-santos},  Theorems 3.21 and 3.23).

\begin{theorem}\label{teo:controleU}
    For every $u \in U$, there exists a control set $\Bbb{D}(u)$ on $\Bbb{F}_0$ such that
    \[
    \Bbb{D}(u)_0=\mathrm{fix}_u(S)
    \]
    and these are all control sets of $S$ on $\Bbb{F}_0$. Moreover, if $c \in C$, then
    \begin{enumerate}
	\item $\Bbb{D}(uc)=\Bbb{D}(u)c$ ~and~ $\Bbb{D}(cu)=\Bbb{D}(u)u^{-1}cu$.
	\item $\Bbb{D}(c)$ is an invariant control set and ~$\Bbb{D}(1)=\Bbb{D}$.
	\item $\Bbb{D}c=\Bbb{D}$ if and only if $\Bbb{D}(cu)=\Bbb{D}(u)$.
    \item $\pi(\textcolor{black}{\Bbb{D}(u)_0})=\Bbb{C}(\pi(u))_0$.
    \end{enumerate}
\end{theorem}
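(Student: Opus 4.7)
The plan is to fix a Weyl chamber $\la_0$ with $\la_0 \cap S \neq \emptyset$ and the corresponding base $B$ so that $\la(B) = \la_0$ and $b_0 \in \Bbb{D}_0$, define $\Bbb{D}(u)$ as the control set containing the point $b_u(B) = u_* b_0$, identify $\Bbb{D}(u)_0$ with $\mathrm{fix}_u(S)$, and finally derive exhaustion together with properties (1)--(4) using Proposition \ref{prop:S-ics}, Theorem \ref{teo:controleweyl} and the computations afforded by Lemma \ref{lema:fixtipou}.

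For existence and the transitivity-set identification, the point $b_u(B)$ is fixed by any $h \in \la_0 \cap S$ by Proposition \ref{prop:fixtipou}; since $h$ lies in the open semigroup $S$, this makes $b_u(B)$ self-accessible and so places it in the transitivity set of a unique control set, which I call $\Bbb{D}(u)$. Given $gB \in \mathcal{B}(S)$, the point $b_u(gB) = g \cdot b_u(B)$ (by Lemma \ref{lema:fixtipou}(1)) is likewise fixed by any element of $\la(gB) \cap S$, hence self-accessible, and so lies in the transitivity set of some control set $\Bbb{D}'$. Projecting by $\pi$ to $\Bbb{F}$ yields $\pi(b_u(gB)) = \p_{\pi(u)}(gB) \in \Bbb{C}(\pi(u))_0$ and $\pi(b_u(B)) \in \Bbb{C}(\pi(u))_0$, so $\Bbb{D}'$ and $\Bbb{D}(u)$ both lie over $\Bbb{C}(\pi(u))$. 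By Proposition \ref{prop:S-ics}(2), $\Bbb{D}' = \Bbb{D}(u) \cdot c_0$ for some $c_0 \in C$; to show $c_0 = 1$, I would use the continuous dependence of $b_u(gB)$ on $gB$ combined with the discreteness of the $C$-fibers of $\pi$, propagating the coincidence from the base point $eB \in \mathcal{B}(S)$, where $\Bbb{D}' = \Bbb{D}(u)$ by construction. The reverse inclusion $\Bbb{D}(u)_0 \subset \mathrm{fix}_u(S)$ uses the standard fact that transitivity sets of open semigroups on compact manifolds consist of points fixed by some interior element of $S$, together with Proposition \ref{prop:fixtipou} to classify such fixed points as $b_{u'}(g'B)$ with $g'B \in \mathcal{B}(S)$, and then projecting by $\pi$ to force $u' = u$.

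Properties (1), (2), (4) then follow by direct computation. For (1), the identities $(uc)_* = u_* c_*$ and $(cu)_* = c_* u_* = u_* (u^{-1}cu)_*$ in $M_*$ yield $b_{uc}(gB) = b_u(gB) \cdot c$ and $b_{cu}(gB) = b_u(gB) \cdot (u^{-1}cu)$, where the right action of $c \in C = M/M_0$ on $\Bbb{F}_0 \simeq K/M_0$ is the $S$-equivariant homeomorphism induced by right multiplication by a representative in $M$; since $S$-equivariant homeomorphisms permute control sets, $\Bbb{D}(uc) = \Bbb{D}(u) c$ and $\Bbb{D}(cu) = \Bbb{D}(u) \cdot u^{-1}cu$. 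For (2), $\pi(c) = 1 \in W(S)$ combined with Proposition \ref{prop:S-ics}(1) gives invariance of $\Bbb{D}(c)$, while $b_1(B) = b_0 \in \Bbb{D}_0$ identifies $\Bbb{D}(1) = \Bbb{D}$. Property (4) is immediate from $\pi(b_u(gB)) = \p_{\pi(u)}(gB)$. With (1) in hand, exhaustion is quick: any control set on $\Bbb{F}_0$ projects via $\pi$ to some $\Bbb{C}(w)$, and Proposition \ref{prop:S-ics}(2) presents it as $\Bbb{D}(u) \cdot c = \Bbb{D}(uc)$ for some $c \in C$ and some $u$ with $\pi(u) = w$.

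The main obstacle is property (3). Combining it with (1), $\Bbb{D}(cu) = \Bbb{D}(u) \cdot u^{-1}cu$, so the claim is equivalent to the equality of right-$C$-stabilizers $F_u = u^{-1} F u$, where $F = \{c \in C : \Bbb{D}c = \Bbb{D}\}$ and $F_u = \{c \in C : \Bbb{D}(u) c = \Bbb{D}(u)\}$. Since $\Bbb{D}$ and $\Bbb{D}(u)$ are generally not $G$-translates of each other, this cannot be read off from equivariance alone; my plan is to compare the $C$-orbits of $\Bbb{D}$ and $\Bbb{D}(u)$ on the finite sets of control sets over $\Bbb{C}(1)$ and $\Bbb{C}(\pi(u))$ respectively, exploiting the commutativity of $C$ (from the complexification hypothesis) and the normality of $C$ in $U$, together with a careful analysis of when two type-$u$ fixed points $b_u(gB)$ and $b_{cu}(gB)$ can be $S$-accessible to one another inside $\Bbb{D}(u)_0$.
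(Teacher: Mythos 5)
First, a point of comparison: the paper does not prove Theorem~\ref{teo:controleU} at all --- it is recalled from \cite{patrao-santos} (Theorems 3.21 and 3.23), so there is no internal proof to measure your argument against. Judged on its own terms, your proposal gets the easy parts right (the computation behind item (1) via the right $C$-action on $K/M_0$, item (2), and the exhaustion of control sets from Proposition~\ref{prop:S-ics}(2)), but it has three genuine gaps. The central one is the claim that $b_u(gB)$ lies in the \emph{same} control set for every $gB \in \mathcal{B}(S)$, which is the whole content of $\Bbb{D}(u)_0 \supseteq \mathrm{fix}_u(S)$. Your plan to ``propagate the coincidence from the base point by continuity and discreteness of the $C$-fibers'' presupposes that $\mathcal{B}(S)$ is connected; it is merely an open subset of $G/B$ cut out by two open conditions, and nothing in the setup gives connectedness. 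The argument that actually works is semigroup-theoretic, not topological: from $b(gB), b(g'B) \in \Bbb{D}_0$ one gets $s \in S$ and, via the last part of Lemma~\ref{lema:fixtipou}, an $n \in N(g'B)$ with $sgB = ng'B$, hence $sb_u(gB) = nb_u(g'B)$, and then the contraction $h^{-k}nh^k \to 1$ of Proposition~\ref{prop:fixtipou} together with openness of transitivity sets transfers accessibility. This is precisely the mechanism the present paper uses in the proofs of Theorem~\ref{teoU(S)subgrupo}, Lemma~\ref{lema:ordercontrolN} and Lemma~\ref{lemma:contrclasseparab}, and it is what should replace your connectedness step.

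Second, in the reverse inclusion $\Bbb{D}(u)_0 \subset \mathrm{fix}_u(S)$, the step ``projecting by $\pi$ to force $u' = u$'' fails: a point of $\Bbb{D}(u)_0$ of the form $b_{u'}(g'B)$ projects into $\Bbb{C}(\pi(u'))_0 = \Bbb{C}(\pi(u))_0$, but by Theorem~\ref{teo:controleweyl} equality of these control sets only yields $W(S)\pi(u') = W(S)\pi(u)$, not $\pi(u') = \pi(u)$; one must instead show directly that $\mathrm{fix}_{u'}(S) = \mathrm{fix}_u(S)$ whenever the two fixed-point sets meet, again by the contraction argument. (There is also the unaddressed issue of whether a general $g \in S$ fixing a point can be replaced by a split-regular one, which Proposition~\ref{prop:fixtipou} alone does not give.) Third, and most importantly, item (3) is not proved: you correctly reduce it to the equality of right-$C$-stabilizers $F_u = u^{-1}Fu$ and then state only a plan. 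Since item (3) is exactly the statement that does not follow formally from equivariance, this is a missing proof rather than a routine verification, and the proposal cannot be considered complete without it.
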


\section{Order of minimal Morse components}

First we note that the natural projection from $K$ to $K/M_0$ establishes a bijection between the Bruhat cells of $K$ and the Bruhat cells of $K/M_0$ and thus establishes a bijection between the Schubert cells of $K$ and the Schubert cells of $K/M_0$. Hence the order of Morse components of a regular element in $K$ coincides with the order of Morse components of this element in $K/M_0$. In order to determine this order, we start constructing a map $\Psi_u$ from a closed ball to the Schubert cell $\S(u)$ in $K/M_0$ for each $u \in U$ following similar steps used in \cite{ps2}, for the case where the Lie algebra of $G$ is a split real form and thus $K \sim K/M_0$, and in \cite{lonardo}, for the flag manifolds.

For each $\alpha \in \Pi$, we write $\mathfrak{g}(\alpha)$ for the semisimple Lie subalgebra generated by $\mathfrak{g}_{\pm \alpha}$. Let $G(\alpha)$ be the connected Lie subgroup with Lie algebra $\mathfrak{g}(\alpha)$, $K(\alpha)\subset G(\alpha)$ a maximal compact subgroup and $M(\alpha)$ the centralizer of $\mathfrak{a}(\alpha)$ in $K(\alpha)$. Since $G(\alpha)$ has real rank one, its Weyl group is of the form $W=\{1,w\}$ and its flag manifold $\mathbb{F}_\alpha$ is the sphere $\mathbb{S}^m$, $m=\dim \mathfrak{s}({\alpha})-1$, where $\mathfrak{s}(\alpha)$ is the symmetric part of a Cartan decomposition of $\mathfrak{g}(\alpha)$. The origin of $\mathbb{F}_\alpha$ is ${b}_{\alpha}=e_1$, the first vector of the canonical basis of $\R^{m+1}$. Note that if $m>1$, then $M(\alpha)$ is a connected subgroup (see \cite{knapp}, Theorem 7.66). In the following, we restate Lemma 1.6 of \cite{lonardo} and provide a correct proof of it.

\begin{lemma}\label{lemma:psiposto1}
    If $B^m$ is the closed ball in $\R^m$, then there exists a continuous map $\psi:B^m \to K(\alpha)$ such that
    \begin{itemize}
        \item $\psi(\mathbb{S}^{m-1})\subset M(\alpha)$.
        \item The map $B^m\backslash\mathbb{S}^{m-1} \to \mathbb{S}^{m}\backslash\{{b}_{\alpha}\}$, $x \mapsto \psi(x){b}_{\alpha}$, is a diffeomorphism onto the subset ~$\mathbb{S}^{m}\backslash\{{b}_{\alpha}\}$.
    \end{itemize}
\end{lemma}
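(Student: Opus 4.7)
The plan is to use the Iwasawa decomposition of the rank-one subgroup $G(\alpha) = K(\alpha) A(\alpha) N(\alpha)$ together with a radial reparametrization of $B^m$ that sends the interior diffeomorphically onto $\mathfrak{n}(\alpha) \cong \R^m$ and the boundary sphere to ``infinity'' in $N(\alpha)$, which under the orbit map corresponds to collapsing onto $b_\alpha \in \mathbb{S}^m$.

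First I would fix a representative $w_0 \in K(\alpha)$ of the nontrivial element of the Weyl group of $G(\alpha)$, so that $w_0 b_\alpha$ is the antipode of $b_\alpha$, and let $\kappa : G(\alpha) \to K(\alpha)$ denote the smooth Iwasawa $K$-projection. Since $\dim \mathfrak{n}(\alpha) = \dim \mathfrak{g}_\alpha + \dim \mathfrak{g}_{2\alpha} = m$ and the orbit map $n \mapsto n w_0 b_\alpha$ is a diffeomorphism of $N(\alpha)$ onto the open Bruhat cell $\mathbb{S}^m \setminus \{b_\alpha\}$, choosing a linear identification $\mathfrak{n}(\alpha) \cong \R^m$ together with a radial diffeomorphism $\phi(rv) = \rho(r) v$ (for $\rho : [0,1) \to [0,\infty)$ a smooth increasing bijection with $\rho(0)=0$) gives a smooth $\psi_\circ : \mathrm{int}(B^m) \to K(\alpha)$ by $\psi_\circ(x) = \kappa(\exp(\phi(x)) w_0)$. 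Then $\psi_\circ(x) b_\alpha = \exp(\phi(x)) w_0 b_\alpha$, so the induced map to $\mathbb{S}^m$ is a diffeomorphism onto $\mathbb{S}^m \setminus \{b_\alpha\}$.

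The core step is extending $\psi_\circ$ continuously to the boundary $\mathbb{S}^{m-1}$ with values in $M(\alpha)$. For each unit $v \in \mathfrak{n}(\alpha)$, as $t \to \infty$ the point $\exp(tv)w_0b_\alpha$ converges to $b_\alpha$ in $\mathbb{S}^m$ (since the open cell only accumulates at the complementary point), so every limit of $\kappa(\exp(tv)w_0)$ stabilizes $b_\alpha$ and therefore lies in $M(\alpha)$. To prove existence and continuity of $\mu(v) := \lim_{t\to\infty}\kappa(\exp(tv)w_0)$ as a function of $v \in \mathbb{S}^{m-1}$, I would use the $M(\alpha)$-equivariance
\begin{equation*}
\kappa(\exp(\Ad(m)X)\,w_0) \;=\; m\,\kappa(\exp(X)\,w_0)\,(w_0^{-1}m^{-1}w_0),\qquad m\in M(\alpha),
\end{equation*}
which follows from $\exp(\Ad(m)X)=m\exp(X)m^{-1}$, from $M(\alpha)$ centralizing $A(\alpha)$ and normalizing $N(\alpha)$, and from $w_0$ normalizing $M(\alpha)$. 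Since $M(\alpha)$ acts transitively on the unit sphere of each of its irreducible summands $\mathfrak{g}_\alpha$ and $\mathfrak{g}_{2\alpha}$ across the four families of rank-one real simple Lie algebras $\mathfrak{so}(n,1)$, $\mathfrak{su}(n,1)$, $\mathfrak{sp}(n,1)$, $\mathfrak{f}_{4(-20)}$, this reduces the task to checking convergence along a one-parameter family of representative rays indexed by the ratio $|X_\alpha|/|X_{2\alpha}|$, which can be verified by direct computation in each rank-one matrix realization.

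Defining $\psi(x) := \psi_\circ(x)$ on the open ball and $\psi(v) := \mu(v)$ on $\mathbb{S}^{m-1}$ then yields the required continuous map: the first bullet holds because $\mu$ takes values in $M(\alpha)$, and the second is built into the construction. The main obstacle is the existence and continuous dependence of the radial Iwasawa limit $\mu$, precisely the point not adequately addressed in the proof appearing in \cite{lonardo}; the $M(\alpha)$-equivariance reduction together with the rank-one classification closes the gap.
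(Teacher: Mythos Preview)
Your strategy is genuinely different from the paper's. The paper does not use the Iwasawa projection at all: instead, for each of the three relevant rank-one realizations $\text{SO}(1,n)$, $\text{SU}(1,n)$, $\text{Sp}(1,n)$ it writes down an explicit skew matrix $A(z,v)$ depending on a point $(z,v)\in S^{m-1}$, computes the one-parameter subgroup of $K(\alpha)$ in closed form as $e^{tA}=I-J_v+\cos(t)J_v+\sin(t)A$, and sets $\psi(t,(z,v))=e^{tA(z,v)}w$. From this formula one reads off immediately that $\psi(\pi,(z,v))=(I-2J_v)w\in M(\alpha)$ and that $(t,(z,v))\mapsto\psi(t,(z,v))b_\alpha=-\cos(t)e_1-\sin(t)(z,v)^t$ is the standard polar diffeomorphism $[0,\pi)\times S^{m-1}\to S^m\setminus\{b_\alpha\}$. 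No limits or asymptotic analysis are involved; continuity and smoothness are manifest.

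Your route via $\kappa(\exp(\phi(x))w_0)$ is conceptually clean and explains \emph{why} the boundary values land in $M(\alpha)$, but it pushes the entire difficulty into the step you defer: the existence and joint continuity of $\mu(v)=\lim_{t\to\infty}\kappa(\exp(tv)w_0)$. Two cautions. First, the $M(\alpha)$-orbit picture is not exactly as you describe: in $\text{SU}(1,n)$ the space $\mathfrak{g}_{2\alpha}$ is one-dimensional and the connected group $M(\alpha)$ acts on it trivially, so the sign of $X_{2\alpha}$ is an extra orbit invariant and the representative family is not indexed by the ratio $|X_\alpha|/|X_{2\alpha}|$ alone. Second, your equivariance identity gives continuity of $\mu$ along each $M(\alpha)$-orbit, but continuity \emph{across} orbits (and hence continuity of $\psi$ at boundary points approached non-radially) still requires the explicit computation you postpone. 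So the proposal is a legitimate alternative outline, but the part labelled ``direct computation'' is precisely where the paper's closed-form argument does all the work.
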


\begin{proof}
As in \cite{lonardo}, we need to consider only the cases $G(\alpha)=\text{SO}(1,n)$, $\text{SU}(1,n)$ and $\text{Sp}(1,n)$, which contain the compact subgroups SO$(n)$, SU$(n)$ and Sp$(n)$.
Let  $z=0$ and $v \in \R^{n-1}$, $z \in \text{Im} \mathbb{C}$ and $v\in \mathbb{C}^{n-1}$ or $z \in \text{Im}\mathbb{H}$ and $v \in \mathbb{H}^{n-1}$, with $v\neq 0$ and $|z|^2+|v|^2=1$, depending on whether $G(\alpha)=\text{SO}(1,n)$, $\text{SU}(1,n)$ or $\text{Sp}(1,n)$, respectively.
Let us consider the $n\times n$ matrices 
\[
A=A{(z,v)}=
\left(
\begin{array}{ccc}
	z & -\overline{v}^t \\
	v & -\frac{vz\overline{v}^t}{|v|^2} \\
\end{array}
\right) 
\mbox{ and }
J=J_{v}=
\left(
\begin{array}{ccc}
	1 & 0 \\
	0 & \frac{v\overline{v}^t}{|v|^2} \\
\end{array}
\right).
\]
We have
\[
A^2=
\left(
\begin{array}{cc}
	z & -\overline{v}^t \\
	v & -\frac{vz\overline{v}^t}{|v|^2} \\
\end{array}
\right)^2
=
\left(
\begin{array}{cc}
	-|z|^2-|v|^2 & 0 \\
	0 & -v\overline{v}^t + \frac{vz\overline{v}^t}{|v|^2}\frac{vz\overline{v}^t}{|v|^2} \\
\end{array}
\right)
=-J,
\]
since $z^2=-|z|^2$, ~$|z|^2+|v|^2=1$ ~and~ 
\[
-v\overline{v}^t+\frac{vz\overline{v}^t}{|v|^2}\frac{vz\overline{v}^t}{|v|^2}
=-v\overline{v}^t-|z|^2\frac{v\overline{v}^t}{|v|^2}
=-v\overline{v}^t-(1-|v|^2)\frac{v\overline{v}^t}{|v|^2}
=-\frac{v\overline{v}^t}{|v|^2}.
\]
Furthermore
\[
A^3=-JA=-A,
\]
since $JA=A$. Thus,
\[
A^n=
\left\{
\begin{array}{cll}
	(-1)^kJ,  & if & n=2k \\
	(-1)^kA,  & if & n=2k+1\\
\end{array}
\right.
\]
and, therefore,
\begin{eqnarray*}
	e^{tA}
	&=& \sum_{n=0}^{\infty}\frac{t^n}{n!}A^n = \sum_{k=0}^{\infty}\frac{t^{2k}}{(2k)!}A^{2k} + \sum_{k=0}^{\infty}\frac{t^{2k+1}}{(2k+1)!}A^{2k+1}\\
	&=& I-J+\sum_{k=0}^{\infty}(-1)^k\frac{t^{2k}}{(2k)!}J + \sum_{k=0}^{\infty}(-1)^k\frac{t^{2k+1}}{(2k+1)!}A\\
	&=&I-J+\cos(t)J + \sin(t)A,
\end{eqnarray*}
where $I$ is the $n\times n$ identity matrix. 

Note that
\[	
A+\overline{A}^t=0 ~~\textrm{if and only if}~~ \overline{z}=-z ~~\textrm{if and only if}~~ z=0, ~z \in \mathrm{Im}\Bbb{C} ~\mbox{ or }~ z \in \mathrm{Im}\mathbb{H}.
\]
Furthermore, since
\begin{itemize}
	\item $\displaystyle \frac{vz\overline{v}^t}{|v|^2} v = \frac{vz|v|^2}{|v|^2}=vz$, and
	\item $\displaystyle \frac{vz\overline{v}^t}{|v|^2} u =0$, for all $u$ such that $u\perp v$,
\end{itemize}
we have 
\begin{itemize}
	\item $z$ is eigenvalue of $\displaystyle \frac{vz\overline{v}^t}{|v|^2}$ of multiplicity 1, and
	\item $0$ is eigenvalue of $\displaystyle \frac{vz\overline{v}^t}{|v|^2}$ of multiplicity $n-1$.
\end{itemize}
Thus, $\displaystyle \mathrm{tr}\left(\frac{vz\overline{v}^t}{|v|^2}\right) = z$ and, therefore
$\mathrm{tr}(A)=z-\mathrm{tr}\left(\frac{vz\overline{v}^t}{|v|^2}\right)=0$.

Let $m=n$, $m=2n$ or $m=4n$ and $K=\text{SO}(n)$, $\text{SU}(n)$ or $\text{Sp}(n)$, depending on whether $G(\alpha)=\text{SO}(1,n)$, $\text{SU}(1,n)$ or $\text{Sp}(1,n)$, respectively. Consider the map $\psi:[0,\pi]\times \mathbb{S}^{m-1} \to K$, given by
\[
\psi(t,(z,v))=e^{tA{(z,v)}}w=(I-J_v)w+\cos(t)J_vw + \sin(t)A(z,v)w,
\]
where $w$ is the nontrivial element of the Weyl group of $G(\alpha)$. We have that $\psi$ is a continuous map and since, in any case, $w$ can be represented by the $n\times n$ matrix $w=\mathrm{diag}(-1,-1,1, \ldots, 1)$ it follows that
\[
\psi(0,(z,v))=w ~~ \mathrm{ and }~~ \psi(\pi,(z,v))=(I-2J_v)w \in M(\alpha),
\]
since the $(1,1)$-entry of $\psi(\pi,(z,v))$ is equal to 1. Thus, $\psi(\{\pi\}\times \mathbb{S}^{m-1}) \subset M(\alpha)$. Furthermore,
\begin{eqnarray*}
	\psi(t,(z,v)){b}_{\alpha}= -\cos(t)
	\left(
	\begin{array}{c}
		1 \\
		0 \\
	\end{array}
	\right)
	-\sin(t)
	\left(
	\begin{array}{c}
		z \\
		v \\
	\end{array}
	\right)
\end{eqnarray*}
and the map
\[
[0,\pi)\times \mathbb{S}^{m-1} \to \mathbb{S}^{m}\backslash\{{b}_{\alpha}\}, ~(t,(z,v))\mapsto \psi(t,(z,v)){b}_{\alpha}
\]
is a diffeomorphism onto $\mathbb{S}^{m}\backslash\{{b}_{\alpha}\}$. Therefore, $\psi$ satisfies the conditions of the lemma.
\end{proof}

Let us consider the identification $\Bbb{F}_0 \simeq K/M_0\simeq G/M_0AN$ and denote by $b_0=M_0AN$ the origin of $\Bbb{F}_0$.
Thus, for all $u\in U$, the Bruhat cell determined by $u$, in $\Bbb{F}_0$, is given by 
\[ 
\B(u)=Nub_0=Nu_*b_0, 
\] 
where $u_*\in M_*$ is a representative of $u$ in $M_*$. 
For each $\alpha \in \Pi$, we can choose a normalized $E_\alpha \in \g_{\alpha}$ such that $\langle E_\alpha, \theta E_\alpha \rangle = -2\pi^2/\langle H_\alpha, H_\alpha \rangle$. Hence, there is an isomorphism from $\sl(2,\R)$ to $\R E_{\alpha} \oplus \R H_{\alpha} \oplus \R \theta E_{\alpha}$ that can be complexified taking $\sl(2,\mathbb{C})$ to $\mathbb{C} E_{\alpha} \oplus \mathbb{C} H_{\alpha} \oplus \mathbb{C} \theta E_{\alpha}$.

\begin{definition}
Let
\[
s_{\alpha} := \exp(F_{\alpha}/2) \in U,
\]
where $F_{\alpha} := E_{\alpha} + \theta E_{\alpha} \in \k$ for a fixed chosen $E_{\alpha} \in \g_{\alpha}$. Let us denote $F_j = F_{\alpha_j}$ and $s_j = s_{\alpha_j}$, so that $s_j^2 = \exp (F_j) \in C$.
\end{definition}

If $\dim \g_\alpha > 1$, then $s^2_\alpha =1$, since $M(\alpha)$ is a connected subgroup.

\begin{lemma}\label{lemma:psiposto1dim1}[\cite{lonardo}, Lemma 1.7]
    The one-dimensional version of Lemma \ref{lemma:psiposto1} is realized by
    \[
    \psi:[0,1] \to K(\alpha), ~t \mapsto \exp(tF_{\alpha}).
    \]
    In particular, $\psi(0)=1$, $\psi(1/2)=\exp(F_{\alpha}/2)=s_{\alpha}$ and $\psi(1)=\exp(F_{\alpha})=s^2_{\alpha}$.
\end{lemma}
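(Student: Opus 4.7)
The plan is to reduce to a concrete model of $\g(\alpha)$. Since $\dim\g_\alpha=1$, the triple $E_\alpha,H_\alpha,\theta E_\alpha$ spans $\g(\alpha)$, and the normalization $\langle E_\alpha,\theta E_\alpha\rangle=-2\pi^2/\langle H_\alpha,H_\alpha\rangle$ fixes a distinguished isomorphism $\g(\alpha)\simeq\sl(2,\R)$ under which $F_\alpha=E_\alpha+\theta E_\alpha$ corresponds to
\[
\begin{pmatrix}0 & \pi\\ -\pi & 0\end{pmatrix}\in\mathfrak{so}(2).
\]
The factor $\pi$ is precisely what the $\pi^2$ appearing in the normalization is designed to produce, and pinning this down is essentially the only nontrivial computation.

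In this model one then computes directly
\[
\exp(tF_\alpha)=\begin{pmatrix}\cos(\pi t) & \sin(\pi t)\\ -\sin(\pi t) & \cos(\pi t)\end{pmatrix},
\]
from which the three identities $\psi(0)=1$, $\psi(1/2)=\exp(F_\alpha/2)=s_\alpha$, and $\psi(1)=\exp(F_\alpha)=s_\alpha^2=-I$ follow by evaluation at $t=0,1/2,1$. Membership $s_\alpha^2\in M(\alpha)$ is then immediate because $\Ad(s_\alpha)$ restricts to $\a(\alpha)=\R H_\alpha$ as the reflection $r_\alpha$, so $\Ad(s_\alpha^2)$ is the identity on $\a(\alpha)$, and hence $s_\alpha^2$ centralizes $\a(\alpha)$.

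For the diffeomorphism claim I would use that in the above model $K(\alpha)$ is a quotient of $\SO(2)$ by a finite subgroup, $M(\alpha)$ corresponds to $\{\pm I\}$, and the maximal flag manifold $\mathbb{F}_\alpha=K(\alpha)/M(\alpha)$ is the circle $\mathbb{S}^1\simeq\SO(2)/\{\pm I\}$. The curve $t\mapsto\psi(t)b_\alpha$ is then the image, under the double cover $\SO(2)\to\mathbb{S}^1$, of the rotation arc from $1$ to $-I$; as $t$ traverses $[0,1]$ it wraps $\mathbb{F}_\alpha$ exactly once, meeting $b_\alpha$ only at the endpoints. Its velocity is the projection of the nonvanishing left-invariant field generated by $F_\alpha$, so on $(0,1)$ it is a smooth injective immersion from a connected one-dimensional manifold onto the open arc $\mathbb{F}_\alpha\setminus\{b_\alpha\}$, and is therefore a diffeomorphism.

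The principal obstacle is the bookkeeping of the scale factor $\pi$ forced by the chosen normalization; once the isomorphism $\g(\alpha)\simeq\sl(2,\R)$ is set up so that $F_\alpha$ rotates with angular speed $\pi$, the rest of the lemma is a direct calculation together with the standard argument that a nonconstant one-parameter subgroup of $\SO(2)$ wraps the quotient circle $\SO(2)/\{\pm I\}$ once between $t=0$ and $t=1$.
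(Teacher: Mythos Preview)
The paper does not prove this lemma; it is quoted directly from Rabelo--San Martin \cite{lonardo}, Lemma~1.7, with no argument given beyond the citation. Your proposal therefore supplies what the paper defers to that reference.

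Your argument is correct and is essentially the computation carried out in the cited source. The reduction to $\sl(2,\R)$ via the span of $E_\alpha, H_\alpha, \theta E_\alpha$ is standard when $\dim\g_\alpha=1$, and your observation that the normalization $\langle E_\alpha,\theta E_\alpha\rangle=-2\pi^2/\langle H_\alpha,H_\alpha\rangle$ forces $F_\alpha$ to generate rotation at angular speed $\pi$ is exactly the purpose of that constant. From there the three evaluations and the endpoint condition $\psi(\{0,1\})\subset M(\alpha)$ are immediate, and your description of $t\mapsto\psi(t)b_\alpha$ as wrapping the circle $\F_\alpha\simeq\SO(2)/\{\pm I\}$ exactly once gives the required diffeomorphism onto $\F_\alpha\setminus\{b_\alpha\}$. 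One small imprecision: $K(\alpha)$ may itself already be $\SO(2)/\{\pm I\}$ (namely when $G(\alpha)\simeq\mathrm{PSL}(2,\R)$), in which case $M(\alpha)$ is trivial rather than $\{\pm I\}$; but the quotient $K(\alpha)/M(\alpha)$ is the same circle in either case, so your argument is unaffected.
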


Let $u = s_1c_1 \cdots s_dc_d \in U$, where $c_i \in C$. Note that $\pi(u)=\pi(s_1c_1 \cdots s_dc_d) = r_1 \cdots r_d$, since $\pi(s_i) =r_i$ and $\pi(c_i)=1$, where $r_i=r_{\alpha_i}$.
For each $i\in \{1, \ldots, d\}$, let $\psi_i : B^{m_i} \to K({\alpha_i})$ be a map that satisfies Lemma \ref{lemma:psiposto1}, if $m_i>1$, or Lemma \ref{lemma:psiposto1dim1}, if $m_i=1$, where $m_i$ is the dimension of the flag of $G(\alpha_i)$.

\begin{definition}\label{def:char-function}
Let $u = s_1c_1 \cdots s_dc_d \in U$, where $c_i \in C$, such that $w = \pi (u) = r_1 \cdots r_d$ is a reduced expression. 
Let $B^m=B^{m_1}\times \cdots \times B^{m_d}\subset \R^m$ be the closed ball with dimension $m=m_1 +\cdots + m_d$. 
Then define the maps $\Psi_w:B^m \to \F$ and $\Psi_u : B^m \to \Bbb{F}_0$, given by
\begin{eqnarray*}
    \Psi_w (t_1, \dots, t_d)&:=& \psi_1 (t_1)c_1 \cdots \psi_d (t_d)c_d \cdot b_\F\qquad \mbox{and}\\
    \Psi_u (t_1, \dots, t_d)&:=& \psi_1 (t_1)c_1 \cdots \psi_d (t_d)c_d \cdot b_0.
\end{eqnarray*}
Choosing $\widetilde{t}_i \in \textnormal{Int}B^{m_i}$, for each $i$, such that  $\psi_i(\widetilde{t}_i)=s_i$, ($\widetilde{t}_i=0$ if $m_i>1$ and $\widetilde{t}_i=1/2$ if $m_i=1$) we have
\[
u = s_1c_1 \cdots s_d c_d = \psi_1 (\widetilde{t}_1)c_1 \cdots \psi_d (\widetilde{t}_d) c_d,
\]
so that
\[
ub_0=\Psi_u (\widetilde{t}_1, \dots, \widetilde{t}_d) \in \Psi_u (\textnormal{Int}B^m).
\]
\end{definition}

We note that $\mathbb{S}^{m-1}=\partial B^m =\partial (\text{Int}B^m)$, where $\partial X$ and $\text{Int}X$ are the frontier and the interior of $X$, respectively. Furthermore, the map $\Psi_w$ in Definition \ref{def:char-function}, satisfies the requirement of the Proposition 1.9 of \cite{lonardo}. In fact,
\begin{eqnarray*}
    \Psi_w (t_1, \dots, t_d)
    &=& \psi_1 (t_1)c_1\psi_2 (t_2)c_2 \cdots \psi_d (t_d)c_d \cdot b_\F\\
    &=&\psi_1 (t_1) \left(c_1 \psi_2(t_2)c_1^{-1}\right)\left(c_1c_2 \psi_3(t_3)c_2^{-1}c_1^{-1}\right)c_1c_2c_3 \cdots \psi_d(t_d)c_d \cdot b_\F\\
    &=& \prod_{i=1}^{d}\left(c_1\cdots c_{i-1}\psi_i (t_i)c_{i-1}^{-1}\cdots c_{1}^{-1}\right) \widetilde{c} \cdot b_\F\\
    &=& \prod_{i=1}^{d}\left(c_1\cdots c_{i-1}\psi_i (t_i)c_{i-1}^{-1}\cdots c_{1}^{-1}\right) \cdot b_\F,
\end{eqnarray*}
where $c_0=1$, $\widetilde{c}=c_1\cdots c_d \in C$, $c_1\cdots c_{i-1}\psi_i (t_i)c_{i-1}^{-1}\cdots c_{1}^{-1} \in K(\alpha_i)$ and $\psi_i (t_i) \in M(\alpha_i)$ if and only if $c_1\cdots c_{i-1}\psi_i (t_i)c_{i-1}^{-1}\cdots c_{1}^{-1} \in M(\alpha_i)$.

Now we obtain a similar result to Proposition 4.3 of \cite{ps2} and Proposition 1.9 of \cite{lonardo}.

\begin{proposition} \label{difeo from cube to cell}
Let $u = s_1c_1 \cdots s_dc_d \in U$, where $c_i \in C$, such that $w = \pi (u) = r_1 \cdots r_d$ is a reduced expression. 
Then
\[
\S (u) = \textnormal{cl} (\B (u)) = \Psi_u (B^m).
\]
The frontier of $\B (u)$ is
\[
\Psi _u \left( \partial B^m \right) = \partial \B (u) = \S (u) \backslash \B (u)
\]
and $\Psi_u |_{\textnormal{Int}B^m}$ is a diffeomorphism from $\textnormal{Int}B^m$ to $\B (u) = N u b_0$.
\end{proposition}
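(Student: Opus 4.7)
The plan is to proceed by induction on $d = \ell(\pi(u))$, following the strategy of Proposition 4.3 of \cite{ps2} and Proposition 1.9 of \cite{lonardo}, with extra care to track how the factors $c_i \in C$ interact with the rank-one contributions and how the picture lifts from $\F$ to $\F_0$. The base case $d = 0$ is immediate: then $u \in C$, $B^0$ is a point, and $\mathcal{B}(u) = \mathcal{S}(u) = \{ub_0\} = \Psi_u(B^0)$.

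For the induction step, the key input from Lemmas \ref{lemma:psiposto1} and \ref{lemma:psiposto1dim1} is a rank-one Bruhat decomposition: for $t_i \in \textnormal{Int}\,B^{m_i}$ one can write $\psi_i(t_i) = n_i\, s_i\, m_i$ with $n_i \in N(\alpha_i)$ and $m_i \in M_0(\alpha_i) \subset M_0$, since the open dense Bruhat cell in the rank-one extended flag $\F_{0,\alpha_i}$ is exactly $N(\alpha_i)\,s_i\,b_{0,\alpha_i}$; whereas for $t_i \in \mathbb{S}^{m_i-1}$ one has $\psi_i(t_i) \in M(\alpha_i) \subset M$.

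For $t \in \textnormal{Int}\,B^m$, plugging these factorizations into the definition of $\Psi_u$ and repeatedly using that $c_j \in M$ normalizes every root subgroup $N(\beta)$, that $s_jc_j \in M_*$ satisfies $\textnormal{Ad}(s_j c_j)N(\beta) = N(r_j\beta)$, that $M_*$ normalizes $M$, and that $M_0\cdot b_0 = b_0$, one reorganizes
\begin{equation*}
    \Psi_u(t) \;=\; \widetilde n_1(t_1)\,\widetilde n_2(t_2)\cdots \widetilde n_d(t_d)\cdot u\cdot b_0,
\end{equation*}
where each $\widetilde n_i(t_i) \in N(r_1\cdots r_{i-1}\alpha_i)$ depends diffeomorphically on $t_i$. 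Since $\pi(u) = r_1\cdots r_d$ is reduced, the roots $\alpha_1, r_1\alpha_2, \ldots, r_1\cdots r_{d-1}\alpha_d$ are distinct positive roots and the product map $\prod_i N(r_1\cdots r_{i-1}\alpha_i) \to N_w := N\cap wN^-w^{-1}$ is a diffeomorphism onto the $d$-dimensional unipotent subgroup that freely parameterizes $Nub_0$. Combining these two facts, $\Psi_u\vert_{\textnormal{Int}\,B^m}$ is a diffeomorphism onto $\mathcal{B}(u) = Nub_0$, which is the third assertion.

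Finally, if $t \in \partial B^m$ some coordinate $t_i$ lies in $\mathbb{S}^{m_i-1}$, so $\psi_i(t_i) \in M(\alpha_i) \subset M$. Writing this factor as $m_0 c$ with $m_0 \in M_0$ and $c \in C$, absorbing $m_0$ into neighboring factors by normality of $M_0$ in $M_*$, and absorbing $c$ into a shifted element of $C$ in the adjacent slot, one obtains $\Psi_u(t) = \Psi_{u''}(t'')$ where $u''$ is produced from $u$ by deleting the $i$-th reflection $s_i$; in particular $\pi(u'')$ is a strict subword of $r_1\cdots r_d$ of length at most $d-1$. By the induction hypothesis $\Psi_u(t) \in \mathcal{S}(u'')$, and Theorem \ref{theorem1} gives $u'' \le u$, so $\mathcal{S}(u'') \subset \mathcal{S}(u)$; this proves $\Psi_u(\partial B^m) \subset \mathcal{S}(u)\setminus \mathcal{B}(u)$. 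Compactness of $B^m$ together with continuity of $\Psi_u$ then forces $\Psi_u(B^m) = \cl(\mathcal{B}(u)) = \mathcal{S}(u)$, and the frontier identity follows by combining this with the diffeomorphism on the interior. The main obstacle will be the bookkeeping required to push all the $c_j$'s and $m_j$'s through the factors $s_j$ and the rank-one pieces $\psi_j(t_j)$ so that the resulting root subgroups align cleanly into the standard product decomposition of $N_w$; reducedness of $r_1\cdots r_d$ is exactly what makes this alignment possible.
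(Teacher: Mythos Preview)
There are two genuine gaps. First, the appeal to Theorem~\ref{theorem1} to obtain $u'' \le u$ is circular: in the paper that theorem is proved as Theorem~\ref{theo-order}, which rests on Proposition~\ref{prop-order} and hence on the present proposition. This appeal is in fact unnecessary---once $\Psi_u|_{\textnormal{Int}B^m}$ is a diffeomorphism onto $\B(u)$, continuity and compactness already give $\Psi_u(B^m)=\cl(\B(u))=\S(u)$, and the frontier identity then follows from the $\F$ statement via $\pi\circ\Psi_u=\Psi_w$---but as written the argument is not self-contained. Second, and more seriously, the rank-one factorization you invoke is not correct: the identity $\psi_i(t_i)\,b_{0,\alpha_i}=n_i\,s_i\,b_{0,\alpha_i}$ only gives $\psi_i(t_i)\in n_i\,s_i\cdot M_0(\alpha_i)A(\alpha_i)N(\alpha_i)$, so the leftover lies in the minimal parabolic of $G(\alpha_i)$, not just in $M_0(\alpha_i)$. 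When the extra $A(\alpha_i)N(\alpha_i)$ pieces are pushed past subsequent factors they interact with the later $n_j$'s by conjugation inside $N$, so $\widetilde n_j$ depends on $t_1,\dots,t_j$ rather than on $t_j$ alone, and it is no longer evident that it lands in the single root subgroup $N(r_1\cdots r_{j-1}\alpha_j)$ (nor that the overall map is a diffeomorphism onto $N_w$, which incidentally has dimension $m$, not $d$). The ``bookkeeping'' you defer is therefore the heart of the proof of the diffeomorphism statement, and it is not carried out.

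The paper avoids both issues by a different route: it first establishes the $\F$ version (reproving surjectivity of $\Psi_w|_{\textnormal{Int}B^m}$ onto $\B(w)$ by induction through the fibration $\F\to\F_d$, then quoting Proposition~1.9 of \cite{lonardo}), and then lifts to $\F_0$ using the covering $\pi:\F_0\to\F$. Since $\pi\circ\Psi_u=\Psi_w$ is a diffeomorphism on $\textnormal{Int}B^m$ and $\pi$ is injective on each Bruhat cell of $\F_0$, the map $\Psi_u|_{\textnormal{Int}B^m}$ is a diffeomorphism onto its image, which is a connected subset of $\pi^{-1}(\B(w))=\coprod_{c\in C}\B(uc)$; a separation argument---each $\B(uc)$ is shown to lie in the open set $u(u^-)^{-1}\B(u^-c)$, and these are pairwise disjoint---then forces the image to equal $\B(u)$.
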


\begin{proof}
First we prove that $\pi:\Bbb{F}_0 \to \F$ is an injective map in each Bruhat cell in $\Bbb{F}_0$. Let $n$, $n' \in N$ and assume that $\pi (n u b_0) = \pi (n' u b_0)$. Then there is $c \in C$ such that $n u b_0 = n' u c b_0$, but since the Bruhat cells are disjoint then $c = 1$, so $n u b_0 = n' u b_0$ and $\pi$ is an injective map in each Bruhat cell.

Since $\pi$ is $G$-equivariant then $\pi (N u b_0) = N w b_\F$, where $\pi (u) = w \in W$, so that $\pi$ is also a surjective map in Bruhat cells, and then bijective in Bruhat cells.

Let us show that the map $\Psi_w|_{\textnormal{Int}B^m} : \textnormal{Int}B^m \to \B(w)$ is a surjective map, by induction on the length $\ell(w)$ of $w$. If $\ell(w)=1$, then the result follows from Lemma \ref{lemma:psiposto1}. For $\ell(w)>1$, let $y \in \B(w)$ and consider the canonical projection  $\pi_d:\F \to \F_d$, where $\F_d$ is the partial flag defined by the simple root associated to the simple reflection $r_d$. Since $\pi_d(y) \in \B(wr_d)$ and $\ell(wr_d)=d-1<\ell(w)$ we have, by the induction hypothesis,
\[
\pi_d(y)=\psi_1(t_1)c_1\cdots \psi_{d-1}(t_{d-1})c_{d-1}{b}_d,
\]
where $t_i \in \textnormal{Int}B^{m_i}$ and ${b}_d$ is the origin of $\F_d$. Note that
\[
wb_\F \in Nwb_\F \cap \pi_d^{-1}(w{b}_d).
\]
Applying Lemma \ref{lemma:psiposto1} to the fiber $\pi_d^{-1}(w{b}_d)$, which is the flag manifold of a rank one group, we have that $wb_\F=\psi_d(t_d)b_\F$. If $y'=\psi_1(t_1)c_1\cdots \psi_{d-1}(t_{d-1})c_{d-1}\psi_d(t_d)c_db_\F$, then
\[
\pi_d(y)=\psi_1(t_1)c_1\cdots \psi_{d-1}(t_{d-1})c_{d-1}{b}_d=\psi_1(t_1)c_1\cdots \psi_{d-1}(t_{d-1})c_{d-1}\psi_d(t_d)c_d{b}_d=\pi_d(y')
\]
and $y' \in Nwb_\F$ since $\psi_i(t_i) \not\in M(\alpha_i)$, from Proposition 1.5 of \cite{lonardo}. It follows by Lemma 1.4 of \cite{lonardo} that
\[
y=y'=\Psi_w(t_1,\ldots, t_d)
\]
since $Nwb_\F$ meets each fiber of $\pi_d$ in a unique point. Therefore, $\Psi_w$ is a surjective map.

Now, from Proposition 1.9 items (1),(2),(3) of \cite{lonardo} and using that $\pi \Psi_u = \Psi_w$ then
\begin{enumerate}
\item $\pi (\Psi_u (B^m)) = \pi (\S (u) )$.

\item $\pi (\Psi_u (\textbf{t}) )\in \pi ( \partial \B (u) )$ if and only if $\textbf{t} \in \partial B^m \cong \mathbb{S}^{m-1}$.

\item $\pi \Psi_u|_{\textnormal{Int}B^m} : \textnormal{Int}B^m \to N w b_\F = \pi (N u b_0) = \pi (\B (u))$ is a diffeomorphism.
\end{enumerate}

In order to show the corresponding statements for $\Bbb{F}_0$, we start with the third one. Since
\[
\pi(\Psi_u(\textnormal{Int}B^m)) = \pi(Nub_0)
\]
we have that
\[
 \Psi_u(\textnormal{Int}B^m) \subset \coprod_{c \in C} Nucb_0
\]
Take $N'= u' N (u')^{-1}$ where $u' = u^- u^{-1}$. In Proposition 2.7 of \cite{patrao-santos}
\[
N'=(N'\cap N^-)(N'\cap N)
\]
where in the paper the notation $N_u(B)$ is used for $N'$. Now note that
\[
u^-u^{-1}Nub_0= u^-u^{-1}Nu(u^-)^{-1}u^- b_0 = N'u^-b_0=(N'\cap N)(N'\cap N^-)u^-b_0
\]
and
\[
(N'\cap N)(N'\cap N^-)u^-b_0 =(N'\cap N)u^- (u^-)^{-1}(N'\cap N^-)u^-b_0 = (N' \cap N)u^-b_0 \subset Nu^-b_0
\]
since
\[
(u^-)^{-1}(N'\cap N^-)u^- \subset (u^-)^{-1}N^-u^- = N.
\]
It follows that
\[
 Nub_0 \subset u (u^-)^{-1} N u^-b_0 = uN^-b_0
\]
since $(u^-)^{-1}Nu^- = N^-$. Hence
\[
Nucb_0 \subset ucN^-b_0 = uN^-cb_0 = u (u^-)^{-1} N u^-cb_0 = u (u^-)^{-1} \B(u^-c).
\]
Since each $\B(u^-c)$ is open and they are disjoint and since $\Psi_u(\textnormal{Int}B^m)$ is connected and contains $ub_0$, then necessarily $\Psi_u(\textnormal{Int}B^m) = Nub_0$.
Hence
\[
\Psi_u(B^m) = \textnormal{cl} \Psi_u (\textnormal{Int}B^m) = \textnormal{cl} (\B (u)) = \S(u).
\]
Also $\Psi_u(\textbf{t}) \in \partial \S (u)$ if and only if $\textbf{t} \in \partial B^m  \cong \mathbb{S}^{m-1}$ and $\Psi_u|_{\textnormal{Int}B^m} : \textnormal{Int}B^m \to N u b_0 = \B (u)$ is a diffeomorphism.
\end{proof}

Let $u = s_1c_1 \cdots s_d c_d \in U$, where $c_i \in C$. Assume that $\pi(u)=r_1\cdots r_d$ and $w_i=r_1\cdots r_{i-1}r_{i+1}\cdots r_d$ are reduced expressions, for some $i\in \{1,\ldots, d\}$. Let us denote
\[
u_i^k = s_1 c_1\cdots s_{i-1}c_{i-1}s_i^{2k}c_is_{i+1}c_{i+1} \cdots s_d c_d
\]
where $k\in \{0,1\}$. If $m_i>1$, then $s^2_i=1$, so $u_i^1=u_i^0$.
We note that the maps $\Psi_{w_i}^k:\widehat{B}_i \to \F$, given by
\begin{eqnarray*}
    \Psi_{w_i}^k(t_1, \dots, t_{i-1}, t_{i+1}, \dots, t_d):= \psi_1 (t_1)c_1 \cdots \psi_i(t_{i-1})c_{i-1}s_i^{2k}c_i\psi_{i+1}(t_{i+1})c_{i+1}\cdots \psi_d (t_d)c_d b_\F
\end{eqnarray*}
where
\[
\widehat{B}_i = B^{m_1} \times \cdots \times B^{m_{i-1}}\times B^{m_{i+1}} \times \cdots \times B^{m_d},
\]
$k\in \{0,1\}$, satisfy the Proposition 1.9 of \cite{lonardo} and $\pi \circ \Psi_{u_i^k}=\Psi_{w_i}^k$.

We now define some subsets of $\partial B^{m}$. For this, let $i \in\{1,\ldots,d\}$. If $m_i>1$, then let
\begin{itemize}
    \item $B'_i = B^{m_1} \times \cdots \times B^{m_{i-1}}\times \{t'_i\} \times B^{m_{i+1}} \times \cdots \times B^{m_d}$,
\end{itemize} 
where $t'_i \in \mathbb{S}^{m_i-1}=\partial B^{m_i}$ is a fixed point.
If $m_i=1$, then we write $\wt{B}_i=B_i^0\cup B_i^1$, where
\begin{itemize}
    \item $B_i^0 = B^{m_1} \times \cdots \times B^{m_{i-1}}\times \{0\} \times B^{m_{i+1}} \times \cdots \times B^{m_d}$ ~and
    \item $B_i^1 = B^{m_1} \times \cdots \times B^{m_{i-1}}\times \{1\} \times B^{m_{i+1}} \times \cdots \times B^{m_d}$.
\end{itemize}
For each $i$, let
\begin{itemize}
    \item $\textnormal{Int}B'_i = \textnormal{Int}B^{m_1} \times \cdots \times \textnormal{Int}B^{m_{i-1}}\times \{t'_i\} \times \textnormal{Int}B^{m_{i+1}} \times \cdots \times \textnormal{Int}B^{m_d}$,
    \item $\textnormal{Int}B_i^0 = \textnormal{Int}B^{m_1} \times \cdots \times \textnormal{Int}B^{m_{i-1}}\times \{0\} \times \textnormal{Int}B^{m_{i+1}} \times \cdots \times \textnormal{Int}B^{m_d}$,
    \item $\textnormal{Int}B_i^1 = \textnormal{Int}B^{m_1} \times \cdots \times \textnormal{Int}B^{m_{i-1}}\times \{1\} \times \textnormal{Int}B^{m_{i+1}} \times \cdots \times \textnormal{Int}B^{m_d}$ ~and
    \item $\textnormal{Int}\wt{B}_i=\textnormal{Int}B^0_i \cup \textnormal{Int}B^1_i$.
\end{itemize}
In what follows, we will use the following notation $\partial B'_i = B'_i\backslash \textnormal{Int}B'_i$, $\partial B^0_i = B^0_i\backslash \textnormal{Int}B^0_i$, $\partial B^1_i = B^1_i\backslash \textnormal{Int}B^1_i$ and $\partial \wt{B}_i = \wt{B}_i\backslash \textnormal{Int}\wt{B}_i$.

\begin{lemma}\label{lemma-faces}
Let $u = s_1c_1 \cdots s_dc_d \in U$, for some $c_i \in C$, and denote
\begin{eqnarray*}
    u_i^k = s_1c_1 \cdots s_{i-1}c_{i-1}s_i^{2k}c_is_{i+1}c_{i+1} \cdots s_dc_d,
\end{eqnarray*}
$k\in \{0,1\}$. 
Suppose that $\pi(u) = w = r_1 \cdots r_d$ and $\pi(u_i^k) = w_i = r_1 \cdots r_{i-1}r_{i+1} \cdots r_d$ are reduced expressions.
\begin{enumerate}
   \item If $m_i=1$, then $\Psi_w$ is a diffeomorphism from $\textnormal{Int}B_i^0$ to $\Psi_w(\textnormal{Int}B_i^0)$ with
   \[
   \Psi_w(\textnormal{Int}B_i^0) =\Psi_w(\textnormal{Int}B_i^1) = \B(w_i)~   \mbox{ and }~
   \Psi_w(\partial B_i^0) =\Psi_w(\partial B_i^1) = \partial \B(w_i),
   \]
   and $\Psi_u$ is a diffeomorphism from $\textnormal{Int}\wt{B}_i$ to $\Psi_u(\textnormal{Int}\wt{B}_i)$ with
   \begin{eqnarray*}
        \Psi_u(\textnormal{Int}\wt{B}_i) = \B(u_i^0)\cup \B(u_i^1)
        \quad
        \mbox{and}
        \quad
        \Psi_u(\partial \wt{B}_i) = \partial \B(u_i^0)\cup \partial \B(u_i^1).
   \end{eqnarray*}
   \item If $m_i > 1$,
   then $\Psi_w$ is a diffeomorphism from $\textnormal{Int}B'_i$ to $\Psi_w(\textnormal{Int}B'_i)$ with
   \[
   \Psi_w(\textnormal{Int}B'_i)  = \B(w_i)
   \quad
   \mbox{and}
   \quad
   \Psi_w(\partial B'_i) =  \partial \B(w_i)
   \]
   and $\Psi_u$ is a diffeomorphism from $\textnormal{Int}B'_i$ to $\Psi_u(\textnormal{Int}B'_i)$ with
   \[
   \Psi_u(\textnormal{Int}B'_i) = \B(u_i^0)
   \quad
   \mbox{and}
   \quad
   \Psi_u(\partial B'_i) = \partial \B(u_i^0).
   \]
\end{enumerate}
\end{lemma}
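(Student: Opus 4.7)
The plan is to reduce everything to Lonardo's Proposition 1.9 in the flag manifold $\F$ and transfer the result to $\Bbb{F}_0$ through the projection $\pi$. The key tools are the reducedness of $w_i$ (which lets us apply Lonardo's result to $w_i$ in $\F$ and Proposition \ref{difeo from cube to cell} to $u_i^k$), the bijective correspondence between Bruhat cells of $\Bbb{F}_0$ lying over a given Bruhat cell of $\F$ and the group $C$, and a standard limit argument showing that whenever a continuous map whose restriction to the interior of a closed ball is a diffeomorphism onto an open set $A$ must send the boundary sphere exactly onto $\partial A$.

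For $m_i=1$, Lonardo's Proposition 1.9 says that $\Psi_w|_{\textnormal{Int}B_i^k}$ is a diffeomorphism onto $\B(w_i)$. Lifting to $\Bbb{F}_0$, the image of $\Psi_u|_{\textnormal{Int}B_i^k}$ is contained in
\[
\pi^{-1}(\B(w_i))=\coprod_{c\in C}\B(u_i^0 c),
\]
a disjoint union of open Bruhat cells, and connectedness confines it to a single one. Evaluating at the central parameter $\widetilde{\mathbf{t}}$ where $\psi_j(\widetilde{t}_j)=s_j$ for $j\neq i$ and $\psi_i(k)=s_i^{2k}$ (from Lemma \ref{lemma:psiposto1dim1}) gives exactly $u_i^k\cdot b_0\in\B(u_i^k)$, pinning down the cell. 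The diffeomorphism property then follows by factoring $\Psi_u|_{\textnormal{Int}B_i^k}$ as $\pi^{-1}\circ \Psi_w|_{\textnormal{Int}B_i^k}$, using that $\pi\colon\B(u_i^k)\to\B(w_i)$ is itself a diffeomorphism (as established inside the proof of Proposition \ref{difeo from cube to cell}). The frontier identity $\Psi_u(\partial B_i^k)=\partial\B(u_i^k)$ follows from the limit argument above applied to this interior diffeomorphism.

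For $m_i>1$, the equality $s_i^2=1$ gives $u_i^0=u_i^1$, and $m_0:=\psi_i(t_i')\in M(\alpha_i)\subset M_0$ because $M(\alpha_i)$ is connected and centralizes $\a$, hence lies in $M$. The same flag-plus-connectedness argument confines the image of $\Psi_u|_{\textnormal{Int}B_i'}$ to a single cell $\B(u_i^0 c_*)$ above $\B(w_i)$. I would identify $c_*=1$ by evaluating at the central parameter and writing the result as
\[
g_1\,m_0\,g_2\cdot b_0=u_i^0\cdot(g_2^{-1}m_0 g_2)\cdot b_0,
\]
where $u_i^0=g_1g_2$ is split around $m_0$: since $M_0$ is normal in $M_*$, the conjugate $g_2^{-1}m_0 g_2$ stays in $M_0$ and fixes $b_0$, so the image lies in $\B(u_i^0)$. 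The diffeomorphism and frontier statements then follow exactly as in the $m_i=1$ case. The main obstacle is precisely this pointwise check: connectedness alone confines the image to a single Bruhat cell but does not pin down which of the $|C|$ cells above $\B(w_i)$ it is, and the identification hinges on normality of $M_0$ in $M_*$, which is slightly stronger than the obvious normality of $M_0$ in $M$ and comes from $M_0$ being characteristic in the normal subgroup $M$ of $M_*$.
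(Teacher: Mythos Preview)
Your argument is correct, but it is considerably more elaborate than the paper's. The paper observes that the restriction of $\Psi_u$ to each face is \emph{literally} one of the maps $\Psi_{u_i^k}$ (after identifying the face with $\widehat B_i$): for $m_i=1$ this is because $\psi_i(0)=1$ and $\psi_i(1)=s_i^{2}$ by Lemma~\ref{lemma:psiposto1dim1}, while for $m_i>1$ the fixed boundary point $t'_i$ can be chosen so that $\psi_i(t'_i)=1$ (in the explicit construction of Lemma~\ref{lemma:psiposto1} this happens e.g.\ at the parameter $(\pi,(0,e_1))$, where $(I-2J_{e_1})w=w^2=I$). Once that identification is made, every claim in the lemma is a direct instance of Proposition~\ref{difeo from cube to cell} applied to $u_i^k$, with no further argument needed.

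Your route---establishing the statement on $\F$ via Lonardo, then lifting through $\pi$ by a connectedness-plus-point-evaluation argument, and for $m_i>1$ pushing the factor $m_0=\psi_i(t'_i)\in M(\alpha_i)\subset M_0$ past $g_2$ using normality of $M_0$ in $M_*$---is valid, and the normality step is a nice observation (identity components of normal subgroups are normal). But it is doing again by hand what Proposition~\ref{difeo from cube to cell} already packages for $u_i^k$. The advantage of your approach is that it works for an arbitrary choice of $t'_i$ on the boundary sphere, whereas the paper tacitly relies on a special choice; the advantage of the paper's approach is that the proof collapses to four one-line identities of maps.
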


\begin{proof}
If $m_i=1$, then it is immediate that
\[
\Psi_w (t_1, \dots, t_{i-1},0,t_{i+1}, \dots, t_d)
=
\Psi_{w_i}^0 (t_1, \dots, t_{i-1},t_{i+1}, \dots, t_d),
\]
\[
\Psi_w (t_1, \dots, t_{i-1},1,t_{i+1}, \dots, t_d)
=
\Psi_{w_i}^1 (t_1, \dots, t_{i-1},t_{i+1}, \dots, t_d),
\]
\[
\Psi_u (t_1, \dots, t_{i-1},0,t_{i+1}, \dots, t_d)
=\Psi_{u_i^0} (t_1, \dots, t_{i-1},t_{i+1}, \dots, t_d)
\]
and that
\[
\Psi_u (t_1, \dots, t_{i-1},1,t_{i+1}, \dots, t_d)
=\Psi_{u_i^1} (t_1, \dots, t_{i-1},t_{i+1}, \dots, t_d)
\]
which shows the first statement.

For the second assertion, we note that $\psi_i(t'_i)=1$, if $m_i>1$. Then it is immediate that
\[
\Psi_w (t_1, \dots, t_{i-1},t'_i,t_{i+1}, \dots, t_d)
=
\Psi_{w_i}^0 (t_1, \dots, t_{i-1},t_{i+1}, \dots, t_d)
\]
and that
\[
\Psi_u (t_1, \dots, t_{i-1},t'_i,t_{i+1}, \dots, t_d)
=
\Psi_{u_i^0} (t_1, \dots, t_{i-1},t_{i+1}, \dots, t_d),
\]
this concludes the proof.
\end{proof}

Let us consider the notation $B_i$ for $B^0_i$ or $B'_i$, depending on whether $m_i=1$ or $m_i>1$.

\begin{lemma} \label{lemma-order}
Let $X$ be a topological space and $\Psi: B^m \to X$ be a continuous map such that $\Psi$ is a homeomorphism from $\textnormal{Int}B^m$ to $\Psi(\textnormal{Int}B^m)$, that
\[
\Psi(\partial B^m) = \Psi(B^m)\backslash \Psi(\textnormal{Int}B^m).
\] 
If there is a subset $S$ of $\{1,\ldots,d\}$ such that, for each $i \in S$, $\Psi$ is a homeomorphism from $\textnormal{Int}B_i$ to $\Psi(\textnormal{Int}B_i)$, that
\[
\Psi(\partial B_i) = \Psi(B_i)\backslash \Psi(\textnormal{Int}B_i)
~~\text{ and ~ that }~~
\Psi(\partial B^m) = \bigcup_{i \in S} \Psi(B_i)
\]
then
\[
\bigcup_{j \notin S} \Psi(B_j) \subset \bigcup_{i \in S} \Psi(B_i)\backslash \Psi(\textnormal{Int}B_i).
\]
\end{lemma}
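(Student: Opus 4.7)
The plan is a pointwise argument by contradiction. Fix $x \in B_j$ for some $j \notin S$ and let $y = \Psi(x)$. Since $B_j \subset \partial B^m$, the hypothesis $\Psi(\partial B^m) = \bigcup_{i \in S}\Psi(B_i)$ puts $y$ into some $\Psi(B_{i_0})$ with $i_0 \in S$. The goal is then to show that $y$ lies in $\Psi(B_{i_0})\setminus \Psi(\textnormal{Int}B_{i_0}) = \Psi(\partial B_{i_0})$, which places $y$ into $\bigcup_{i \in S}[\Psi(B_i)\setminus \Psi(\textnormal{Int}B_i)]$ as required. Suppose instead $y \in \Psi(\textnormal{Int}B_{i_0})$. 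The homeomorphism hypothesis on $\Psi|_{\textnormal{Int}B_{i_0}}$ gives injective pullback: there is a unique $z \in \textnormal{Int}B_{i_0}$ with $\Psi(z) = y$, and necessarily $z \neq x$ because $x \in B_j$ with $j \neq i_0$ forces $x \notin \textnormal{Int}B_{i_0}$.

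To derive a contradiction I would combine three facts: (a) $\Psi(\textnormal{Int}B_{i_0})$ is relatively open in $\Psi(B_{i_0})$, because $\Psi(\partial B_{i_0})$ is the continuous image of the compact set $\partial B_{i_0}$ and is therefore closed; (b) $\Psi(\textnormal{Int}B^m)\cap \Psi(\partial B^m) = \emptyset$, since $\Psi(\partial B^m) = \Psi(B^m)\setminus \Psi(\textnormal{Int}B^m)$; and (c) $\textnormal{Int}B^m$ is dense in $B^m$. Using Hausdorffness of $X$ together with the finite collection of compacta $\{\Psi(B_i) : i \in S,\ y \notin \Psi(B_i)\}$ and $\{\Psi(\partial B_i) : i \in S\}$, choose an open neighborhood $O$ of $y$ in $X$ such that $O\cap \Psi(B^m) \subset \Psi(\textnormal{Int}B_{i_0})$, by separating $y$ from the closed union of those sets. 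Continuity of $\Psi$ at $x$ then yields a neighborhood $V$ of $x$ in $B^m$ with $\Psi(V)\subset O$, and density of $\textnormal{Int}B^m$ produces some $x'\in V\cap \textnormal{Int}B^m$. Then $\Psi(x') \in O \cap \Psi(B^m)\subset \Psi(\textnormal{Int}B_{i_0})\subset \Psi(\partial B^m)$, while simultaneously $\Psi(x')\in \Psi(\textnormal{Int}B^m)$; by (b) this is impossible.

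The main obstacle is the separation step that delivers the key inclusion $O\cap \Psi(B^m)\subset \Psi(\textnormal{Int}B_{i_0})$. One must carefully rule out that $O$ meets some other $\Psi(B_i)$ or $\Psi(\partial B_i)$ for $i \in S\setminus\{i_0\}$ in a way that contains $y$ as a limit point; this is why the finiteness of $S$, compactness of each $\Psi(B_i)$ and $\Psi(\partial B_i)$, and Hausdorffness of $X$ are all needed simultaneously. In the Lie-theoretic application that follows, the ambient space is the smooth manifold $\Bbb{F}_0$, which is Hausdorff and locally compact, so this separation is immediate, and the product structure of $B^m = B^{m_1}\times\cdots\times B^{m_d}$ makes the neighborhood $V$ of $x$ easy to intersect with $\textnormal{Int}B^m$ explicitly by perturbing the ``boundary'' coordinates fixing $x$ in $B_j$ into the interior.
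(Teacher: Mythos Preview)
Your separation step does not go through: an open set $O$ with $O \cap \Psi(B^m) \subset \Psi(\textnormal{Int}B_{i_0})$ cannot exist. Since $\textnormal{Int}B^m$ is dense in $B^m$ and $\Psi$ is continuous, $\Psi(B^m) \subset \textnormal{cl}\,\Psi(\textnormal{Int}B^m)$, so every open neighborhood of $y \in \Psi(B^m)$ meets $\Psi(\textnormal{Int}B^m)$, which is disjoint from $\Psi(\textnormal{Int}B_{i_0}) \subset \Psi(\partial B^m)$ by your own item (b). The compacta you list are all subsets of $\Psi(\partial B^m)$, so separating $y$ from them says nothing about $O \cap \Psi(\textnormal{Int}B^m)$. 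In effect, the ``contradiction'' $\Psi(x') \in \Psi(\textnormal{Int}B^m) \cap \Psi(\partial B^m)$ you derive at the end is really a proof that no such $O$ exists, not that the assumption $y \in \Psi(\textnormal{Int}B_{i_0})$ is false. A related warning sign is that, after noting $x \neq z$, your argument never again uses the specific index $j$.

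The paper's proof works on the domain side and uses $j$ essentially. Given $\tau \in \textnormal{Int}B_{i_0}$ with $\Psi(\tau) = y$, one has $\tau_j \in \textnormal{Int}B^{m_j}$ because $i_0 \neq j$, and one forms the product set $B \subset \textnormal{Int}B^m \cup \textnormal{Int}B_{i_0}$ whose $j$-th factor is a small ball $B_\varepsilon(\tau_j) \subset \textnormal{Int}B^{m_j}$, whose $i_0$-th factor is $\textnormal{Int}B^{m_{i_0}} \cup \{\tau_{i_0}\}$, and whose remaining factors are full interiors. This $B$ is open in $\textnormal{Int}B^m \cup \textnormal{Int}B_{i_0}$, on which $\Psi$ is a homeomorphism onto its image, so $\Psi(B)$ is a relatively open neighborhood of $y$ there. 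Now push each coordinate of $x$ slightly off the boundary to obtain $x^{(\delta)} \in \textnormal{Int}B^m$ with $x^{(\delta)} \to x$; the $j$-th coordinate of $x^{(\delta)}$ stays near $\partial B^{m_j}$, hence outside $B_\varepsilon(\tau_j)$, so $x^{(\delta)} \notin B$ and, by injectivity of $\Psi$ on $\textnormal{Int}B^m \cup \textnormal{Int}B_{i_0}$, $\Psi(x^{(\delta)}) \notin \Psi(B)$. But $\Psi(x^{(\delta)}) \to y \in \Psi(B)$, contradicting openness. The neighborhood $\Psi(B)$ deliberately contains much of $\Psi(\textnormal{Int}B^m)$; it only excludes the approach direction singled out by the $j$-th factor, which is precisely what your construction was unable to do.
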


\begin{proof}
If $j \notin S$ and $(t_1,\ldots,t_d) \in B_j$, it follows that
\[
\Psi(t_1,\ldots,t_d) \in \Psi(\partial B^m) = \bigcup_{i \in S} \Psi(B_i).
\]
It remains to show that, if $i \in S$ and $(\tau_1,\ldots,\tau_d) \in \textnormal{Int}B_i$, then
\[
\Psi(t_1,\ldots,t_d) \neq \Psi(\tau_1,\ldots,\tau_d).
\]
Since $i \neq j$, we have that $\tau_j \in \textnormal{Int}B^{m_j}$ and thus there exists $\varepsilon > 0$ such that $B_{\varepsilon}(\tau_j) \subset \textnormal{Int}B^{m_j}$, where $B_{\varepsilon}(\tau_j)$ denotes the open ball with center $\tau_j$ and radius $\varepsilon$. If $i < j$, denote
\begin{eqnarray*}
    B &=& \textnormal{Int}B^{m_1} \times \cdots \times \textnormal{Int}B^{m_{i-1}}\times (\textnormal{Int}B^{m_{i}}\cup\{\tau_i\}) \times \textnormal{Int}B^{m_{i+1}} \times \cdots\\ 
    & & \cdots \times \textnormal{Int}B^{m_{j-1}}
    \times B_{\varepsilon}(\tau_j) \times \textnormal{Int}B^{m_{j+1}} \times \cdots  \times \textnormal{Int}B^{m_d}.
\end{eqnarray*}
If $j < i$, denote
\begin{eqnarray*}
    B &=& \textnormal{Int}B^{m_1} \times \cdots \times \textnormal{Int}B^{m_{j-1}}\times B_{\varepsilon}(\tau_j) \times \textnormal{Int}B^{m_{j+1}} \times \cdots \\
    & & \cdots  \times \textnormal{Int}B^{m_{i-1}} \times (\textnormal{Int}B^{m_{i}}\cup\{\tau_i\}) \times \textnormal{Int}B^{m_{i+1}} \times \cdots  \times \textnormal{Int}B^{m_d}.
\end{eqnarray*}
In both cases, we have that
\[
(\tau_1,\ldots,\tau_d) \in B \subset \textnormal{Int}B^m\cup\textnormal{Int}B_i
\]
and that $B$ is an open subset of $\textnormal{Int}B^m\cup\textnormal{Int}B_i$. Since $\Psi$ is a homeomorphism from $\textnormal{Int}B^m\cup\textnormal{Int}B_i$ to $\Psi(\textnormal{Int}B^m\cup\textnormal{Int}B_i)$, it follows that $\Psi(B)$ is an open neighborhood of $\Psi(\tau_1,\ldots,\tau_d)$ in $\Psi(\textnormal{Int}B^m\cup\textnormal{Int}B_i)$. 
For each $k\in\{1, \ldots, d\}$, let $t'_k \in \mathbb{S}^{m_k-1}=\partial B^{m_k}$, if $m_k>1$ and $t'_k=1$ if $m_k=1$. 
We have that
\[
\Psi(t_1,\ldots,t_d)
= \lim_{\delta \to 0^+} \Psi(t_1+\delta_1t'_1,\ldots, t_d + \delta_dt'_d)
\]
where $\delta_k  = -\delta$, if $t_k = 1$ or $t_k=t'_k$, and $\delta_k  = \delta$ otherwise. If we denote
\[
\delta_0 = d(\partial B_{\varepsilon}(\tau_j),\partial B^{m_j}) > 0,
\]
then for all $\delta \in (0,\delta_0)$, we have that
\[
(t_1+\delta_1t'_1,\ldots, t_d + \delta_dt'_d) \in \textnormal{Int}B^m \cup \textnormal{Int}B_i \backslash B
\]
since $t_j = 0$ or $t_j=t'_j$.
Hence $\Psi(t_1+\delta_1t'_1,\ldots, t_d + \delta_dt'_d) \notin \Psi(B)$ and therefore $\Psi(t_1,\ldots,t_d) \notin \Psi(B)$.
\end{proof}

\begin{proposition} \label{prop-order}
Let $w = r_1 \cdots r_d$ be a reduced expression and denote
\[
w_i = r_1 \cdots r_{i-1}r_{i+1} \cdots r_d.
\]
Let $S$ be the subset of $i \in \{1,\ldots,d\}$ such that $w_i$ is a reduced expression. Then
\[
\S(w) = \B(w) \cup \bigcup_{i \in S} \S(w_i)
\]
and
\[
\bigcup_{j \notin S} \S(w_j) \subset \bigcup_{i \in S} \partial \B(w_i).
\]
Furthermore, let $u = s_1c_1 \cdots s_dc_d $, for some $c_i \in C$, and denote
\[
u_i^k = s_1c_1 \cdots s_{i-1}c_{i-1}s_i^{2k}c_is_{i+1}c_{i+1} \cdots s_dc_d,
\]
$k\in\{0,1\}$. Then
\[
\S(u) = \B(u) \cup \bigcup_{i \in S} \S(u_i^0) \cup \S(u_i^1)
\]
and
\[
\bigcup_{j \notin S} \S(u_j^0) \cup \S(u_j^1) \subset \bigcup_{i \in S} \partial \B(u_i^0) \cup \partial \B(u_i^1).
\]
We note that if $m_l>1$, then $u_l^1=u_l^0$, so $\S(u_l^1)=\S(u_l^0)$ and $\partial \B(u_l^1)=\partial \B(u_l^0)$.
\end{proposition}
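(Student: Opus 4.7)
First I would invoke Proposition \ref{difeo from cube to cell} to write $\S(u) = \B(u) \cup \Psi_u(\partial B^m)$ (and likewise $\S(w) = \B(w) \cup \Psi_w(\partial B^m)$ for the flag version), then decompose the boundary sphere as $\partial B^m = \bigcup_{i=1}^{d} B_i$, where I take $B_i = B'_i$ if $m_i > 1$ and $B_i = \wt{B}_i = B_i^0 \cup B_i^1$ if $m_i = 1$. Lemma \ref{lemma-faces} immediately handles the faces with $i \in S$: the image $\Psi_u(B_i)$ equals $\S(u_i^0)$ (or $\S(u_i^0) \cup \S(u_i^1)$ when $m_i = 1$), accounting exactly for the union on the right-hand side of the claimed identity. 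It remains to show that for every $j \notin S$ one has $\Psi_u(B_j) \subset \bigcup_{i \in S} \Psi_u(B_i)$; granting this, the first identity is immediate, and the second follows from Lemma \ref{lemma-order}, whose hypotheses will then be verified.

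For the flag version, restricting $\Psi_w$ to $B_j$ replaces the factor $\psi_j(t_j)$ by $1$ (or by $s_j^2 \in C$ when $m_j = 1$ and $t_j = 1$), so the image lies in $\S(\widetilde{w}_j)$, where $\widetilde{w}_j$ is the element of $W$ represented by the non-reduced word $w_j = r_1 \cdots \widehat{r_j} \cdots r_d$. Since this word is not reduced, $\ell(\widetilde{w}_j) < d - 1$, while $\ell(w_i) = d - 1$ for every $i \in S$. By the Subword Property of the Bruhat order, $\widetilde{w}_j \leq w$; by the chain property of Bruhat order together with the Strong Exchange Lemma, which identifies the immediate predecessors of $w$ as exactly $\{w_i : i \in S\}$, there is some $i \in S$ with $\widetilde{w}_j \leq w_i$, and hence $\S(\widetilde{w}_j) \subset \S(w_i) = \Psi_w(B_i)$. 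This proves the flag version of the desired inclusion.

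To lift this to $\Bbb{F}_0$, I would use that $\pi \circ \Psi_u = \Psi_w$ (from Definition \ref{def:char-function}) together with the fact that $\pi$ restricts to a bijection on each Bruhat cell (established inside the proof of Proposition \ref{difeo from cube to cell}). Thus $\pi(\Psi_u(B_j)) \subset \S(w_i)$, which pulls back to a disjoint union of translates of $\S(u_i^0)$ by elements of $C$, and the connectedness of $B_j$ (or $\wt{B}_j$), combined with explicit evaluation of $\Psi_u$ at boundary parameters such as $t_j = 0$ or $t_j = 1$, pins down the relevant $C$-lifts as precisely $\S(u_i^0)$ and $\S(u_i^1)$. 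This last step, namely controlling the $C$-ambiguity when selecting the correct Schubert cells in $\Bbb{F}_0$ above the cell in $\F$, is the main obstacle I anticipate. Once it is secured, one has $\Psi_u(\partial B^m) = \bigcup_{i \in S} \Psi_u(B_i)$, giving the first identity of the proposition, and Lemma \ref{lemma-order} then immediately yields the second identity $\bigcup_{j \notin S} \S(u_j^0) \cup \S(u_j^1) \subset \bigcup_{i \in S} \partial \B(u_i^0) \cup \partial \B(u_i^1)$ as its conclusion.
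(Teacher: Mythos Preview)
Your flag-manifold argument is essentially the paper's. The real divergence is in lifting to $\Bbb{F}_0$, and there your sketch has a genuine gap that you yourself flag. Your connectedness argument rests on the claim that $\pi^{-1}(\S(w_i))$ is a \emph{disjoint} union of $C$-translates of $\S(u_i^0)$, but this is false: the open Bruhat cells $\B(u_i^0 c)$ for distinct $c \in C$ are disjoint, yet their closures $\S(u_i^0 c)$ typically share lower-dimensional boundary strata. So connectedness of $\Psi_u(B_j)$ does not single out a particular $C$-translate, and evaluating at $t_j \in \{0,1\}$ only tells you which Bruhat cell the basepoints $u_j^k b_0$ sit in, not which Schubert cell absorbs the whole image. (There is also a minor slip: $\partial B^m = \bigcup_i B_i$ is literally false at the domain level when some $m_i > 1$, since $B'_i$ contains a single point of $\mathbb{S}^{m_i-1}$; only the image-level statement can hold.)

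The paper circumvents this entirely. Instead of trying to place each $\Psi_u(B_j)$ inside specific Schubert cells, it works with the \emph{open} Bruhat cells, where the $C$-translates really are disjoint. It shows that inside $\partial \B(u)$ one has $p^{-1}\bigl(\bigcup_{i \in S} \B(w_i)\bigr) = \bigcup_{i \in S} \B(u_i^0) \cup \B(u_i^1)$; since $\bigcup_{i \in S} \B(w_i)$ is dense in $\partial \B(w)$ and $\pi$ is a covering, this preimage is dense in $\partial \B(u)$, and taking closures gives the first identity. The second identity is then obtained not via Lemma \ref{lemma-order} (which would only control $\Psi_u(B_j)$, whereas for $j \notin S$ Lemma \ref{lemma-faces} does \emph{not} identify this with $\S(u_j^0)\cup\S(u_j^1)$), but by projecting $\S(u_j^k)$ down to $\S(w_j)$, invoking the already-proved flag-level inclusion $\bigcup_{j\notin S}\S(w_j)\subset\bigcup_{i\in S}\partial\B(w_i)$, and pulling back through the same preimage computation.
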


\begin{proof}
We have that
\[
\S(w)
=
\bigcup_{w'\leq w} \B(w')
=
\B(w) \cup \bigcup_{w' < w} \S(w').
\]
By the Proposition of Section 5.11 of \cite{humphreys}, we have that, if $w' < w$, then $w' \leq w_i$ for some $i \in S$. Since $\S(w') \subset \S(w_i)$, if $w' \leq w_i$, it follows that
\[
\S(w)
=
\B(w) \cup \bigcup_{i \in S} \S(w_i).
\]
We have that $\Psi_w: B^m \to \F$ is a continuous map such that $\Psi_w$ is a homeomorphism from $\textnormal{Int}B^m$ to $\Psi_w(\textnormal{Int}B^m)$, that
\[
\Psi_w(\textnormal{Int}B^m) = \B(w)
\qquad
\mbox{and}
\qquad
\Psi_w(B^m) = \S(w)
\]
and that
\[
\Psi_w(\partial B^m) = \partial \B(w) = \Psi_w(B^m)\backslash \Psi_w(\textnormal{Int}B^m).
\]
For each $i \in S$, $\Psi_w$ is a homeomorphism from $\textnormal{Int}B_i$ to $\Psi_w(\textnormal{Int}B_i)$, that
\[
\Psi_w(\textnormal{Int}B_i) = \B(w_i)
\qquad
\mbox{and}
\qquad
\Psi_w(B_i) = \S(w_i)
\]
and that
\[
\Psi_w(\partial B_i) = \partial \B(w_i) =\Psi_w(B_i)\backslash\Psi_w(\textnormal{Int}B_i).
\]
Since
\[
\Psi_w(\partial B^m) =
\partial \B(w) =
\bigcup_{i \in S} \S(w_i) =
\bigcup_{i \in S} \Psi_w(B_i)
\]
it follows that
\[
\bigcup_{j \notin S} \Psi_w(B_j) \subset
\bigcup_{i \in S}\Psi_w(B_i)\backslash\Psi_w(\textnormal{Int}B_i)
=
\bigcup_{i \in S} \partial \B(w_i).
\]
Since $w_j b_\F \in \Psi_w(B_j)$, for every $j \not\in S$, it follows that
\[
\bigcup_{j \notin S} \S(w_j) \subset \bigcup_{i \in S} \partial \B(w_i).
\]
From now on we will consider the notation $B_l$ for $\wt{B}_l$ or $B'_l$, depending on whether $m_l=1$ or $m_l>1$.
Denote by $p: \partial \B(u) \to \partial \B(w)$ the restriction to $\partial \B(u)$ of $\pi: \Bbb{F}_0 \to \F$.
Since
\[
\partial \B(u) = \Psi_u(\partial B^m) = \bigcup_{i = 1}^d \Psi_u(B_i),
\]
we have that
\[
\partial \B(u)
\backslash
\bigcup_{i \in S} \Psi_u(\textnormal{Int}B_i)
=
\bigcup_{j \notin S} \Psi_u(B_j)
\cup
\bigcup_{i \in S}\Psi_u(B_i)\backslash\Psi_u(\textnormal{Int}B_i). 
\]
Since
\[
p\left(\bigcup_{j \notin S} \Psi_u(B_j) \right)
=
\bigcup_{j \notin S} \Psi_w(B_j)
\subset
\bigcup_{i \in S} \partial \B(w_i)
\]
and since
\[
p\left(\bigcup_{i \in S} \Psi_u(B_i)\backslash \Psi_u(\textnormal{Int}B_i) \right)
=
\bigcup_{i \in S} \Psi_w(B_i)\backslash \Psi_w(\textnormal{Int}B_i)
=
\bigcup_{i \in S} \partial \B(w_i)
\]
it follows that
\[
p^{-1}\left(\bigcup_{i \in S} \B(w_i) \right)
=
\bigcup_{i \in S} \Psi_u(\textnormal{Int}B_i)
=
\bigcup_{i \in S} \B(u_i^0) \cup \B(u_i^1).
\]
Since $\bigcup_{i \in S} \B(w_i)$ is dense in $\partial \B(w)$ and since $\pi$ is a covering map, it follows that $\bigcup_{i \in S} \B(u_i^0) \cup \B(u_i^1)$ is dense in $\partial \B(u)$, which implies that
\[
\S(u) \backslash \B(u)
=
\partial \B(u)
=
\bigcup_{i \in S} \S(u_i^0) \cup \S(u_i^1).
\]
Finally, since
\[
p\left(\bigcup_{j \notin S} \S(u_j^0) \cup \S(u_j^1)\right)
=
\bigcup_{j \notin S} \S(w_j)
\subset
\bigcup_{i \in S} \partial \B(w_i)
\]
it follows that
\[
\bigcup_{j \notin S} \S(u_j^0) \cup \S(u_j^1) \subset \bigcup_{i \in S} \partial \B(u_i^0) \cup \partial \B(u_i^1),
\]
concluding the proof.
\end{proof}

Now the characterization of the order is almost immediate.

\begin{theorem} \label{theo-order}
Let $u, u' \in U$. Then $u' < u$ if and only if for some (or equivalently for each) reduced expression $\pi(u) = r_1 \cdots r_d$ and $u = s_1 \cdots s_dc$, for some $c \in C$, then $u'=s_1^{k_1}\cdots s_d^{k_d}c$, where
\[
k_i=
\left\{
\begin{array}{lcc}
     0 ~or~ 2, & if & i \in \{i_1, \ldots, i_l\} \\
     1, & if & i \notin \{i_1, \ldots, i_l\}
\end{array}
\right.
\]
with $w_0=\pi(u),$ $w_l=\pi(u')$ and $w_k$
is a reduced expression for each $0\leq k \leq l$, where
\[
w_k=\prod_{i \notin \{i_1,\ldots,i_k\}}r_i.
\]
\end{theorem}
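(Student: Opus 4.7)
The plan is to induct on the length $d = \ell(\pi(u))$ of the reduced expression, with Proposition \ref{prop-order} as the main engine. The base case $d = 0$ is trivial: then $u \in C$ and $\S(u) = \{ub_0\}$ is a single point, so any $u'$ with $\S(u') \subset \S(u)$ must equal $u$, and the formula holds vacuously with $l = 0$.

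For the reverse direction, starting from the positional form of $u'$ with deletion indices $\{i_1,\ldots,i_l\}$ and the chain of reduced expressions $w_0 > w_1 > \cdots > w_l$, I will iteratively apply Proposition \ref{prop-order} to build a descending sequence $u = v_0, v_1, \ldots, v_l = u'$ with $\S(v_m) \subset \S(v_{m-1})$ at each stage. Since $w_m$ is reduced by hypothesis, the index $i_m$ lies in the deletable set $S_{m-1}$ for $v_{m-1}$'s reduced expression $w_{m-1}$. Writing $v_{m-1}$ in Proposition \ref{prop-order}'s factorized form, with the $s_i^2 \in C$ factors accumulated at previously-deleted positions placed as interior coefficients and the trailing coefficient kept equal to the original $c$ of $u$, Proposition \ref{prop-order} then yields $v_m = (v_{m-1})_{i_m}^{k_{i_m}/2}$, and transitivity gives $\S(u') \subset \S(u)$.

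For the forward direction, assume $u' < u$ strictly, so $\B(u') \subset \partial \B(u)$. Proposition \ref{prop-order} gives $\partial \B(u) \subset \bigcup_{i \in S} \S(u_i^0) \cup \S(u_i^1)$, and since Bruhat cells partition $\Bbb{F}_0$ and each $\S(v)$ is a disjoint union of such cells, the single cell $\B(u')$ lies in one $\S(u_i^k)$, giving $u' \leq u_i^k$ for some $i \in S$ and $k \in \{0,1\}$. I will then apply the inductive hypothesis to $u' \leq u_i^k$, noting that $\pi(u_i^k) = w_i$ has length $d - 1$, using a factorization of $u_i^k$ whose trailing coefficient is exactly the $c$ of $u$. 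For $k = 0$ this is immediate from $u_i^0 = s_1 \cdots \hat{s}_i \cdots s_d \cdot c$; for $k = 1$ I will exploit Proposition \ref{prop-order}'s freedom in the choice of $c_j$'s to factor $u_i^1$ with $s_i^2$ placed as the interior coefficient $c_{i-1}$, which keeps the trailing coefficient equal to $c$. The induction then supplies the exponents $k_j$ for $j \neq i$ and a chain $w_i > \cdots > w_l$ of reduced expressions; setting $k_i = 2k \in \{0,2\}$ and prepending $w_0 = \pi(u)$ completes the required form.

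The main obstacle I anticipate is the bookkeeping of the trailing coefficient $c$: because $C$ is normal but not necessarily central in $U$, pushing $s_i^2$ factors naively to the right replaces $c$ by a nontrivial conjugate, breaking the ``same $c$'' conclusion of the theorem. The decisive observation is that Proposition \ref{prop-order} permits arbitrary interior coefficients $c_j$, and this flexibility makes it possible at every inductive stage to place a fresh $s_i^2$ as an interior coefficient and keep the trailing element equal to $c$ throughout the induction. Once this is carried through, the equivalence of ``for some'' and ``for each'' reduced expression in the theorem statement follows automatically, since both directions of the biconditional apply verbatim starting from any chosen reduced expression.
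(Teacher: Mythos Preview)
Your approach is correct and matches the paper's: the paper declares the theorem ``almost immediate'' from Proposition~\ref{prop-order} and omits the proof, and your induction on $d=\ell(\pi(u))$ via that proposition is exactly what is intended. One small point to make explicit: when you say you will ``place a fresh $s_i^2$ as an interior coefficient'' and then ``apply the inductive hypothesis,'' you are really inducting on the stronger statement in which $u=s_1c_1\cdots s_dc_d$ with arbitrary interior $c_j\in C$ (the same generality as Proposition~\ref{prop-order}), since the theorem as literally stated has no interior coefficients to carry the accumulated $s_i^2$'s; once you state the induction in that form, the forward step $u'\leq u_i^k$ with $u_i^k = s_1c_1\cdots s_{i-1}(c_{i-1}s_i^{2k}c_i)s_{i+1}c_{i+1}\cdots s_dc_d$ goes through cleanly and specializing $c_1=\cdots=c_{d-1}=1$, $c_d=c$ recovers the theorem.
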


Since we have characterized  the order of Morse components of the dynamics induced by regular elements, we can now characterize the order of Morse components of the dynamics induced by any arbitrary element of $G$. First we need a preliminary result.

\begin{lemma}
    If $H \in \textnormal{cl}\mathfrak{a}^+$ and $u\in U$, then
    \[
    G(H)ub_0=K(H)ub_0=K_H^0ub_0.
    \]
\end{lemma}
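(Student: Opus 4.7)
The plan is to establish $G(H)ub_0 \supseteq K(H)ub_0 = K_H^0 ub_0$ together with the reverse inclusion $G(H)ub_0 \subseteq K(H)ub_0$; the middle equality will come from the algebraic identification $K_H^0 = K(H)M_0$, and the reverse inclusion from a tangent-space comparison at $ub_0$ followed by a connectedness-compactness argument.

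For $K_H^0 ub_0 = K(H) ub_0$, I first note that the Lie algebra $\mathfrak{k}_H = \mathfrak{k} \cap \mathfrak{g}_H$ decomposes as $\mathfrak{m} + \mathfrak{k}(H)$, and that $M$ normalizes each root space $\mathfrak{g}_\alpha$ and hence $\mathfrak{g}(H)$, so the identity component $K_H^0$ equals $K(H) M_0$. Since $u \in M_*$ normalizes $M_0$ and $M_0 \subseteq M_0 AN$ fixes $b_0$, one obtains
\[
K_H^0 u b_0 = K(H) M_0 u b_0 = K(H) u M_0 b_0 = K(H) u b_0.
\]

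For $G(H) u b_0 \subseteq K(H) u b_0$, I would compare the tangent spaces of the $G(H)$- and $K(H)$-orbits through $ub_0$ inside $T_{ub_0}\Bbb{F}_0 \cong \mathfrak{g}/\Ad(u)(\mathfrak{m} \oplus \mathfrak{a} \oplus \mathfrak{n})$. Equality of these tangent spaces reduces to the algebraic inclusion
\[
\mathfrak{g}(H) \subseteq \mathfrak{k}(H) + \Ad(u)(\mathfrak{m} \oplus \mathfrak{a} \oplus \mathfrak{n}).
\]
Using the Iwasawa decomposition $\mathfrak{g}(H) = \mathfrak{k}(H) \oplus \mathfrak{a}(H) \oplus \mathfrak{n}(H)$ of the semisimple subalgebra $\mathfrak{g}(H)$, only $\mathfrak{a}(H)$ and $\mathfrak{n}(H)$ require attention. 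The inclusion $\mathfrak{a}(H) \subseteq \mathfrak{a} = \Ad(u)\mathfrak{a}$ is immediate. For a summand $\mathfrak{g}_\alpha$ of $\mathfrak{n}(H)$ (so $\alpha > 0$ and $\alpha(H) = 0$), set $w = \pi(u)$. If $w^{-1}\alpha > 0$, then $\mathfrak{g}_\alpha = \Ad(u)\mathfrak{g}_{w^{-1}\alpha} \subseteq \Ad(u)\mathfrak{n}$. If $w^{-1}\alpha < 0$, then for any $X \in \mathfrak{g}_\alpha$ I write $X = (X + \theta X) + (-\theta X)$, where the first summand lies in $\mathfrak{k} \cap \mathfrak{g}(H) = \mathfrak{k}(H)$ because $\alpha(H) = 0$, while $-\theta X \in \mathfrak{g}_{-\alpha} = \Ad(u)\mathfrak{g}_{-w^{-1}\alpha} \subseteq \Ad(u)\mathfrak{n}$ since $-w^{-1}\alpha > 0$.

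Once the tangent spaces at $ub_0$ agree, $K(H)$-equivariance propagates the equality to every point of $K(H) u b_0$, making $K(H) u b_0$ an open submanifold of $G(H) u b_0$; since $K(H)$ is compact, $K(H) u b_0$ is also closed in $G(H) u b_0$, and connectedness of $G(H) u b_0$ (as the continuous image of the connected group $G(H)$) forces the two orbits to coincide. The only real subtlety is the second case in the treatment of $\mathfrak{n}(H)$, but it is resolved by the simple observation that $-\theta X \in \mathfrak{g}_{-\alpha}$ lands in $\Ad(u)\mathfrak{n}$ precisely when $-w^{-1}\alpha$ is a positive root.
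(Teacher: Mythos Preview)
Your argument is correct but takes a different route from the paper's. For the equality $K(H)ub_0 = K_H^0 ub_0$ you and the paper do essentially the same thing, via $K_H^0 = K(H)M_0$ and the fact that $u$ normalises $M_0$. The divergence is in how you obtain $G(H)ub_0 = K(H)ub_0$. The paper simply observes the chain of inclusions $K(H) \subset G(H) \subset G_H^0$ and then invokes the already-established fact (Proposition 3.3 of \cite{ps1}) that $G_H^0 ub_0 = K_H^0 ub_0$; a sandwich argument finishes immediately. You instead prove the orbit equality from scratch by comparing tangent spaces at $ub_0$, reducing to the inclusion $\mathfrak{g}(H) \subseteq \mathfrak{k}(H) + \Ad(u)(\mathfrak{m}\oplus\mathfrak{a}\oplus\mathfrak{n})$, handling the root spaces of $\mathfrak{n}(H)$ according to the sign of $w^{-1}\alpha$, and then closing with compactness of $K(H)$ and connectedness of the $G(H)$-orbit. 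Your approach is more self-contained and makes explicit the root-theoretic reason the orbits coincide; the paper's approach is shorter and conceptually cleaner, but only because the real work has been exported to the cited result on $G_H^0$-orbits.
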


\begin{proof}
    By Proposition 3.3 of \cite{ps1} we have that $G_H^0ub_0 = K_H^0ub_0$.
    Since $K_H=K(H)M$, it follows that $K_H^0=K(H)M_0$ and that
    \[
    K_H^0ub_0 =K(H)M_0ub_0=K(H)ub_0,
    \]
    for all $u \in U$. Since $K(H) \subset G(H) \subset G_H^0$, we have that
    \[
    K(H)ub_0 \subset G(H)ub_0 \subset G_H^0ub_0 = K_H^0ub_0 = K(H)ub_0,
    \]
    which completes the proof.
\end{proof}

The Morse components of $H \in \textnormal{cl}\mathfrak{a}^+$ are given by $\mathcal{M}^H(u)=K_H^0ub_0$, where $u \in U$, and their Bruhat and Schubert cells are given by  $\B^H(u)=N_HK_H^0ub_0$ and $\S^H(u)=\cl \left(\B^H(u)\right)$.

\begin{proposition}
    If $H \in \textnormal{cl}\mathfrak{a}^+$ and $u \in U$, then 
    \[
    \B^H(u)=\bigcup_{v \in U_Hu}\B(v)
    \qquad
    \mbox{and}
    \qquad
    \S^H(u)=\bigcup_{v \in U_Hu}\S(v).
    \]
\end{proposition}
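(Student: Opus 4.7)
The plan is to establish the Bruhat cell identity first and then deduce the Schubert cell identity by passing to closures. The key computation is to rewrite $\B^H(u)$ in a more symmetric form. Using the decomposition $N = N_H N(H)$ from the preliminaries, together with the observation that $N(H)$ is connected and contained in $G_H^0$ while $K_H^0 \subset G_H^0$, we have $N(H) K_H^0 \subset G_H^0$. Invoking the preceding lemma (which yields $G_H^0 u b_0 = K_H^0 u b_0$) one obtains
\[
N K_H^0 u b_0 = N_H N(H) K_H^0 u b_0 \subset N_H G_H^0 u b_0 = N_H K_H^0 u b_0 = \B^H(u),
\]
and the reverse inclusion $\B^H(u) \subset N K_H^0 u b_0$ is immediate from $N_H \subset N$. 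This gives the identity $\B^H(u) = N K_H^0 u b_0$.

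For the inclusion $\bigcup_{v \in U_H u} \B(v) \subset \B^H(u)$, I would argue directly. Given $v = u_H u \in U_H u$ with $u_H \in U_H$, choose a representative $u_{H*} \in M_* \cap K_H^0$ of $u_H$. Then $v b_0 = u_{H*} u b_0 \in K_H^0 u b_0$, so $\B(v) = N v b_0 \subset N K_H^0 u b_0 = \B^H(u)$.

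To upgrade this inclusion to an equality I would use a partition-matching argument. The Bruhat decomposition realises $\Bbb{F}_0$ as the disjoint union $\coprod_{v \in U} \B(v)$, while the Morse decomposition for the translation flow $\exp(tH)$ realises $\Bbb{F}_0$ as the disjoint union $\coprod_{U_H u' \in U_H \backslash U} \B^H(u')$ of unstable manifolds (each $\B^H(u')$ being the set of points whose backward $\omega$-limit lies in $\mathcal{M}^H(u')$). Grouping the Bruhat cells according to the cosets of $U_H$ and applying the inclusion just proved to each coset yields $\bigcup_{v \in U_H u'} \B(v) \subset \B^H(u')$ for every $u' \in U$; summing over cosets, the left-hand side exhausts $\Bbb{F}_0$ disjointly, and therefore must agree piece-by-piece with the right-hand side.

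The Schubert cell identity $\S^H(u) = \bigcup_{v \in U_H u} \S(v)$ then follows from the Bruhat cell identity by taking closures, since $U_H u$ is finite and closure commutes with finite unions. The main technical obstacle is the identification $\B^H(u) = N K_H^0 u b_0$; it rests crucially on combining the decomposition $N = N_H N(H)$ with the fact, provided by the preceding lemma, that the orbit $K_H^0 u b_0$ coincides with the larger $G_H^0$-orbit. Once this identification is in hand, the remainder of the proof is a bookkeeping exercise comparing the two disjoint coverings of $\Bbb{F}_0$.
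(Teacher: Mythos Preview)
Your proof is correct and follows essentially the same approach as the paper: establish $\B^H(u)=NK_H^0ub_0$ via $N=N_HN(H)$ and the preceding lemma, obtain the inclusion $\bigcup_{v\in U_Hu}\B(v)\subset\B^H(u)$, and upgrade it to equality by comparing the Bruhat partition of $\Bbb{F}_0$ with the partition by unstable manifolds; the Schubert identity then follows from finiteness of $U_Hu$. Your presentation is in fact more explicit than the paper's, which compresses the partition-matching step into a single line.
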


\begin{proof}
    Since $N=N_HN(H)$ and $N(H) \subset G(H)$ it follows that
    \[
    NK_H^0ub_0=N_HK_H^0ub_0=\B^H(u),
    \]
    thus $\bigcup_{v \in U_Hu}\B(v) \subset \B^H(u)$. Since $\bigcup_{v \in U}\B(v) = K/M_0$ we have
    \[
    \bigcup_{v \in U_Hu}\B(v) = \B^H(u).
    \]
    As the union on the left hand side is finite,
    \[
    \S^H(u)=\cl\left(\B^H(u)\right)= \bigcup_{v \in U_Hu}\cl\left(\B(v)\right) =\bigcup_{v \in U_Hu}\S(v),
    \]
    which concludes the proof.
\end{proof}

Now the next theorem, which characterizes the
inverse of the order of the minimal Morse components of any $H \in \cl \mathfrak{a}^+$ acting on $K/M_0$, is almost immediate.

\begin{theorem}
    If $H \in \cl \mathfrak{a}^+$ and $u,v \in U,$ then the following conditions are equivalent:
    \begin{enumerate}
        \item $u\leq_H v$.
        \item $\S^H(u) \subset \S^H(v)$.
        \item For every $u' \in U_Hu$, there exists $v'\in U_Hv$ such that $\S(u') \subset \S(v')$.
        \item For every $u' \in U_Hu$, there exists $v'\in U_Hv$ such that $u'\leq v'$.
    \end{enumerate}
\end{theorem}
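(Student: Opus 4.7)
The plan is to reduce the theorem to the two basic definitions plus the preceding proposition $\S^H(u)=\bigcup_{u'\in U_Hu}\S(u')$, handling the four equivalences by collapsing them into two easy and one slightly delicate implication.

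First, (1) $\Leftrightarrow$ (2) is immediate: the notation $u\leq_H v$ was defined at the start of the paper to mean $\S^H(u)\subset \S^H(v)$. Similarly, (3) $\Leftrightarrow$ (4) is immediate from the definition of $\leq$ on $U$, since for any $u',v'\in U$ one has $u'\leq v'$ precisely when $\S(u')\subset \S(v')$. So only the equivalence (2) $\Leftrightarrow$ (3) requires work.

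The implication (3) $\Rightarrow$ (2) is the easy half: assuming that for each $u'\in U_Hu$ there is a $v'\in U_Hv$ with $\S(u')\subset \S(v')$, one takes the union over $u'\in U_Hu$ on the left and bounds the right hand side by $\bigcup_{v'\in U_Hv}\S(v')$; invoking the preceding proposition on both sides gives exactly $\S^H(u)\subset \S^H(v)$.

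The main work is the converse (2) $\Rightarrow$ (3). Fix $u'\in U_Hu$. Using the proposition, the assumption rewrites as
\[
\S(u')\;\subset\;\bigcup_{u''\in U_Hu}\S(u'')=\S^H(u)\;\subset\;\S^H(v)=\bigcup_{v'\in U_Hv}\S(v').
\]
In particular the Bruhat cell $\B(u')\subset \S(u')$ sits inside $\bigcup_{v'\in U_Hv}\S(v')$. Now each $\S(v')$ decomposes as a disjoint union $\coprod_{w\leq v'}\B(w)$, and the Bruhat cells $\B(w)$ for $w\in U$ are pairwise disjoint and form a partition of $K/M_0$. Since $\B(u')$ is connected (being the image of $\mathrm{Int}\,B^m$ under the homeomorphism $\Psi_{u'}$ of Proposition \ref{difeo from cube to cell}), it must lie inside a single cell $\B(w)$ of the decomposition, and disjointness forces $w=u'$. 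Hence $u'\leq v'$ for some $v'\in U_Hv$, i.e.\ $\S(u')\subset \S(v')$, which is (3).

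The only potential obstacle is the disjointness/partition step in (2) $\Rightarrow$ (3); once one notes that $\B(u')$ is a single connected Bruhat cell and that the cells are pairwise disjoint, the implication drops out. No new structural input beyond the preceding proposition and the definition of $\leq$ on $U$ is needed.
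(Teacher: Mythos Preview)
Your proposal is correct and matches the paper's approach: the paper states the theorem is ``almost immediate'' from the preceding proposition $\S^H(u)=\bigcup_{v\in U_Hu}\S(v)$ and gives no further argument, which is exactly the reduction you carry out.

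One small point: in (2) $\Rightarrow$ (3), the appeal to connectedness of $\B(u')$ is both unnecessary and not quite sound as written---connectedness alone does not force a set to lie in a single piece of an arbitrary partition, since the Bruhat cells are neither open nor closed in general. The clean argument is simply to take the point $u'b_0\in\B(u')\subset\bigcup_{v'\in U_Hv}\S(v')$, pick $v'\in U_Hv$ with $u'b_0\in\S(v')=\coprod_{w\leq v'}\B(w)$, and use disjointness of the Bruhat cells to conclude $u'\leq v'$. This is what you effectively reach anyway, so the overall argument stands.
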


Since the Morse components, Bruhat and Schubert cells of an arbitrary element of $G$ are equal to the Morse components, Bruhat and Schubert cells of its hyperbolic component, we get the following result.

\begin{corollary}
    Let $g$ be an arbitrary element of $G$ and $H \in \cl \mathfrak{a}^+$ be its hyperbolic component. The inverse order of the minimal Morse components of $g$ in $\Bbb{F}_0$ and in $K$ is given by $\leq_H$.
\end{corollary}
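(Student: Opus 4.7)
The plan is to reduce the statement to the previously established theorem for an arbitrary $H \in \cl \mathfrak{a}^+$ by invoking the multiplicative Jordan decomposition of $g$.

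First, I would recall from Subsection \ref{subsec:dynamics} that, by the multiplicative Jordan decomposition, one can write $g^t = e^t h^t u^t$ where the three factors commute, $h^t = \exp(tH)$ for $H \in \cl \mathfrak{a}^+$, and the elliptic and unipotent factors lie in $G_H$. The cited dynamical results on $K$ (and hence on $\F_0$ by projection) show that the minimal Morse components of $g$ and their unstable manifolds are determined exclusively by the hyperbolic component $H$: one has $\mathcal{M}(g^t,u) = G_H^0 u b \cup g G_H^0 u b$ and $\mathrm{un}(g^t,u) = N_H \mathcal{M}(g^t,u)$, matching the corresponding formulas for $h = \exp H$ up to the (possibly disconnected) components accounted for by the element $c_g \in C$. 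In the quotient to $\F_0 \simeq K/M_0$, this means that the Bruhat cells of the minimal Morse components of $g$ coincide with those of $H$, so their closures, the Schubert cells, coincide as well; denoting them $\S^g(u)$ and $\S^H(u)$ respectively, one has $\S^g(u) = \S^H(u)$ for every $u \in U$.

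Second, once the identification of Schubert cells is in place, the inverse of the dynamical order on the minimal Morse components of $g$ is, by definition, the incidence order on the Schubert cells $\S^g(u)$. Since these agree setwise with $\S^H(u)$, their incidence order is identical with $\leq_H$. The previous theorem then provides the algebraic characterization of $\leq_H$ in $U_H \backslash U$ in terms of the order $\leq$ on $U$ (itself given algebraically by Theorem \ref{theo-order}). Applying that theorem termwise finishes the proof.

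The only potentially delicate point is in step one, namely the careful bookkeeping of the possible extra connected component $g G_H^0 u b$ when $g \notin G_H^0$. The key is that this component is obtained from $G_H^0 u b$ by translation by some $c_g \in C$, hence is itself of the form $G_H^0 u' b$ for $u' = c_g u \in U$, and in particular its Bruhat and Schubert cells are already among the $\B^H(u')$ and $\S^H(u')$. Thus passing from $H$ to $g$ does not introduce any new cell nor alter the incidence relations, and the reduction to $\leq_H$ is complete.
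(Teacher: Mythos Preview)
Your proposal is correct and follows essentially the same approach as the paper: the paper's proof is simply the one-line observation preceding the corollary that the Morse components, Bruhat cells, and Schubert cells of an arbitrary $g$ coincide with those of its hyperbolic component $H$, whence the order agrees with $\leq_H$. Your write-up elaborates on exactly this reduction via the Jordan decomposition and even handles more carefully than the paper does the bookkeeping for the possible extra connected component when $g \notin G_H^0$.
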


\section{Order of control sets}\label{sec:controlsets}

First, we observe that the usual projection $K \to K/M_0$ induces a bijective map from the control sets of $S$ on $K$ onto the control sets of $S$ on $K/M_0$. Furthermore, this map takes the control set on $K$ determined by $u$ to the control set on $K/M_0$ determined by the same element $u$. Hence, the order of the control sets on $K$ coincides with the order of the control sets on $K/M_0$. In order to determine this order, we start introducing an order on the canonical group $U$.

If $\alpha \in \Pi(B)$ and $E_{\alpha} \in \g_{\alpha}$ is such that
$
\langle E_{\alpha}, \theta E_{\alpha}\rangle(B)=-2\pi^2/\langle H_{\alpha},H_{\alpha}\rangle(B),
$
then 
\[
\langle gE_{\alpha}, \left(\Ad(g)\theta\Ad(g)^{-1}\right) gE_{\alpha}\rangle(gB)
    =-2\pi^2/\langle H_{g\alpha},H_{g\alpha}\rangle(gB).
\]
Since $g\mathfrak{g}_\alpha = \mathfrak{g}_{g\alpha}$, we can choose $E_{g\alpha}$ in $\mathfrak{g}_{g\alpha}$ such that $E_{g\alpha}=gE_{\alpha}$. Hence, $F_{g\alpha}=gF_{\alpha}$. Thus, for all $t,$
\[
\exp\left(tF_{g\alpha}\right)=\exp\left(gtF_{\alpha}\right)=g\exp\left(tF_\alpha\right)g^{-1}.
\]
In particular,
\[
s_{g\alpha}=gs_\alpha g^{-1} ~\text{ which implies }~ s_{g\alpha}^k=gs_\alpha^k g^{-1},
\]
for each nonnegative integer $k$. Therefore, if $u \in U$ and $u(B)=s_{\alpha_1}^{k_1}\cdots s_{\alpha_d}^{k_d}c$, with $c \in C(B)$ and $k_i\geq 0$, then $u(gB)=gu(B)g^{-1} \in U(gB)$ and
\[
u(gB)=(gs_{\alpha_1}^{k_1}g^{-1})\cdots (gs_{\alpha_d}^{k_d}g^{-1})(gcg^{-1})=s_{g\alpha_1}^{k_1}\cdots s_{g\alpha_d}^{k_d}gcg^{-1},
\]
with $gcg^{-1} \in C(gB)$.

The proof of the next proposition is analogous to that given for Lemma 2.2 and Proposition 2.3 of \cite{patrao-santos}.

\begin{proposition}\label{prop:orderU}
    Let $g \in G$ and $u, u' \in U$. Then
    \begin{enumerate}
        \item $u'(B) \leq u(B)$ if and only if $u'(gB)\leq u(gB)$.
        \item The relation $\leq$ on the canonical group $U$, defined by $u'\leq u$ if and only if $u'(gB) \leq u(gB)$, is a partial order.
    \end{enumerate}
\end{proposition}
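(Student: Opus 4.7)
The plan is to reduce both parts to Theorem \ref{theorem1} and Proposition \ref{prop:orderW} via the $g$-conjugation formulas established in the paragraph immediately preceding the proposition.

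For part (1), I would argue as follows. Suppose $u'(B) \leq u(B)$ in $U(B)$. By Theorem \ref{theorem1} applied inside $U(B)$, there is a reduced expression $\pi(u(B)) = r_{\alpha_1} \cdots r_{\alpha_d}$ together with a decomposition $u(B) = s_{\alpha_1} \cdots s_{\alpha_d} c$ with $c \in C(B)$, such that $u'(B) = s_{\alpha_1}^{k_1} \cdots s_{\alpha_d}^{k_d} c$ with the prescribed exponents $k_i \in \{0,1,2\}$ arising from a chain of reduced subexpressions $w_0 = \pi(u(B)), w_1, \dots, w_l = \pi(u'(B))$. Now conjugate by $g$: using $s_{g\alpha} = g s_\alpha g^{-1}$ and the analogous identity $r_{g\alpha} = g r_\alpha g^{-1}$ from the discussion just before Proposition \ref{prop:orderW}, we obtain
\[
u(gB) = s_{g\alpha_1} \cdots s_{g\alpha_d} (gcg^{-1}), \qquad u'(gB) = s_{g\alpha_1}^{k_1} \cdots s_{g\alpha_d}^{k_d} (gcg^{-1}),
\]
with $gcg^{-1} \in C(gB)$. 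By Proposition \ref{prop:orderW}(1), every reduced expression in $W(B)$ becomes a reduced expression in $W(gB)$ after conjugation; in particular both $\pi(u(gB)) = r_{g\alpha_1} \cdots r_{g\alpha_d}$ and the intermediate $w_k(gB) = \prod_{i \notin \{i_1,\dots,i_k\}} r_{g\alpha_i}$ are reduced. Applying Theorem \ref{theorem1} inside $U(gB)$, this data witnesses $u'(gB) \leq u(gB)$. The converse is symmetric, replacing $g$ by $g^{-1}$ (and the cosets $gB, B$ by $B, g^{-1}(gB)$).

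For part (2), I first note that part (1) shows the relation $u' \leq u$ on the canonical group $U$ is well-defined, i.e.\ independent of the particular coset $gB$ used as reference. It remains to verify the three order axioms. Reflexivity is immediate since $u(B) \leq u(B)$ in $U(B)$ (take $l = 0$ in Theorem \ref{theorem1}). Antisymmetry and transitivity on the canonical $U$ follow directly from the corresponding properties on $U(B)$, which in turn hold because $\leq$ on $U(B)$ is the incidence order $\mathcal{S}(u') \subset \mathcal{S}(u)$ of Schubert cells in $K/M_0$; these inclusions form a partial order by elementary topology (inclusion of closed sets is antisymmetric and transitive).

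I do not expect a serious obstacle here: the content is entirely a bookkeeping transfer from $U(B)$ to the canonical $U$, carried out via the conjugation identities $s_{g\alpha} = g s_\alpha g^{-1}$ and $r_{g\alpha} = g r_\alpha g^{-1}$. The only subtle point to watch is that the intermediate Weyl-group words $w_k$ used in the characterization of Theorem \ref{theorem1} remain reduced after conjugation, but this is exactly Proposition \ref{prop:orderW}(1), so no extra argument is needed.
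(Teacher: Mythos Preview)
Your proposal is correct. The paper itself does not spell out a proof, merely declaring it ``analogous to Lemma 2.2 and Proposition 2.3 of \cite{patrao-santos}''; judging from that reference and from the proof of Proposition~\ref{prop:fixtipow-fechoN} later in Section~4, the intended argument is almost certainly the direct geometric one: since $N(gB)=gN(B)g^{-1}$ and $b_0(gB)=gb_0(B)$, left translation by $g$ carries the Bruhat cell $N(B)u(B)b_0$ onto $N(gB)u(gB)b_0(gB)$, hence carries $\S(u(B))$ onto $\S(u(gB))$, and inclusion is preserved. You instead route through the algebraic characterization of Theorem~\ref{theorem1} combined with Proposition~\ref{prop:orderW}(1) on preservation of reduced expressions under conjugation. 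Both approaches are valid; yours is a bit more elaborate but has the merit of showing explicitly that the full combinatorial witness (the chain of reduced subexpressions and the exponents $k_i$) transfers intact from $U(B)$ to $U(gB)$, which is pleasant given that this is precisely the data used in the rest of Section~4. The direct geometric route is shorter and does not require Theorem~\ref{theorem1} at all.
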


As in Subsection \ref{sec:controlsetK}, let us fix a Weyl chamber $\la_0$ in $G$ such that $S\cap \la_0 \neq \emptyset$, let us denote by $\Bbb{D}$ the invariant control set of $S$ on $K/M_0$ such that $b(B)=b_0 \in \Bbb{D}_0$ and by $\B(S)$ the set of cosets determined by $S$ and $\Bbb{D}$, i.e.,
\[
\B(S)=\{gB : \la(gB) \cap S \neq \emptyset ~\text{ and }~ b(gB) \in \Bbb{D}_0\}.
\]
The \emph{canonical subgroup of $U$ characteristic of $S$} is defined by
\begin{equation*}\label{eqU(S)}
U(S) = \{u \in U : \Bbb{D}(u) = \Bbb{D} \}.
\end{equation*}

\begin{theorem}\label{teoU(S)subgrupo}
$U(S)$ is a subgroup of $U$.
\end{theorem}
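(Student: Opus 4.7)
The plan is to verify that $U(S)$ contains the identity and is closed under multiplication; closure under inversion then follows from the finiteness of $U$. The identity lies in $U(S)$ by definition, since $\Bbb{D}(1) = \Bbb{D}$.

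For closure under multiplication, I would take $u_1, u_2 \in U(S)$ and first reduce the problem modulo $C$ via the projection $\pi : U \to W$. By Theorem \ref{teo:controleU}(4), $\pi(\Bbb{D}(u_i)_0) = \Bbb{C}(\pi(u_i))_0$, so $\Bbb{D}(u_i) = \Bbb{D}$ forces $\pi(u_i) \in W(S)$. Since $W(S)$ is a subgroup of $W$ by Theorem \ref{teo:controleweyl}, $\pi(u_1 u_2) = \pi(u_1)\pi(u_2) \in W(S)$, whence $\Bbb{C}(\pi(u_1 u_2)) = \Bbb{C}$. Proposition \ref{prop:S-ics} then yields that $\Bbb{D}(u_1 u_2)$ is $S$-invariant and, being contained in $\pi^{-1}(\Bbb{C})$, satisfies $\Bbb{D}(u_1 u_2) = \Bbb{D} c$ for some $c \in C$. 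The remaining task is to prove $c \in C(S)$, equivalently $\Bbb{D}(u_1 u_2) = \Bbb{D}$.

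To establish this, I would frame the situation in terms of the set-theoretic short exact sequence $1 \to C(S) \to U(S) \to W(S) \to 1$. The identification $U(S) \cap C = C(S)$ follows from Theorem \ref{teo:controleU}(1) applied to $u = 1$, and $\pi(U(S)) = W(S)$ is obtained by using the same formula to correct any lift $u \in \pi^{-1}(w)$ of $w \in W(S)$ by right multiplication by an element of $C$ until it lands in $U(S)$. Fixing a set-theoretic section $\sigma : W(S) \to U(S)$ of $\pi$, the subgroup property of $U(S)$ becomes the vanishing (modulo $C(S)$) of the cocycle
\[
(w_1, w_2) \longmapsto \sigma(w_1)\sigma(w_2)\sigma(w_1 w_2)^{-1} \in C.
\]
I would verify this cocycle condition by a direct computation using the fixed-point parametrization $\Bbb{D}(u)_0 = \mathrm{fix}_u(S) = \{g u_* b_0 : gB \in \mathcal{B}(S)\}$ from Theorem \ref{teo:controleU}, together with the $W$-equivariant structure of the fixed-point configuration of a regular element of $\la_0 \cap \mathrm{int}(S)$.

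The main obstacle will be this last step. A naive approach via combining elements $g_i \in \mathrm{int}(S)$ fixing $u_{i*} b_0$ into a single element of $\mathrm{int}(S)$ fixing $u_{1*}u_{2*} b_0$ fails, because $u_{1*} g_2 u_{1*}^{-1}$ need not lie in $S$. The expected resolution mirrors the analogous proof that $W(S)$ is a subgroup in \cite{smt}: one exploits the parabolic-type structure of $W(S)$ together with the abelianness of $C$ (which ensures $C(S)$ is $W$-invariant under conjugation, hence normalized by all of $U$) to pin down the cocycle and show its values lie in $C(S)$.
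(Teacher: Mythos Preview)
Your reduction modulo $C$ is correct up to the point where you conclude $\Bbb{D}(u_1u_2) = \Bbb{D}c$ for some $c \in C$, and the auxiliary facts you extract from Theorem~\ref{teo:controleU} (that $U(S)\cap C = C(S)$, that $\pi(U(S)) = W(S)$, and that $U(S)$ normalizes $C(S)$) are all valid. But the argument then stalls: showing that the cocycle $\sigma(w_1)\sigma(w_2)\sigma(w_1w_2)^{-1}$ lands in $C(S)$ is \emph{exactly} the original closure-under-multiplication statement restricted to the image of $\sigma$, so your reframing has not reduced the difficulty. Your proposed ``expected resolution'' via the parabolic structure of $W(S)$ and abelianness of $C$ is too vague to constitute a proof; abelianness of $C$ gives you that conjugation by $U$ is a well-defined action on $C$, but it does not by itself force the cocycle values into $C(S)$, and invoking the argument from \cite{smt} for $W(S)$ does not automatically lift to $U$.

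The paper bypasses all of this cohomological machinery with a short direct dynamical argument. Given $u,u' \in U(S)$ and $gB \in \mathcal{B}(S)$, the point $b_u(gB)$ lies in $\Bbb{D}_0$, so by the characterization of $\Bbb{D}_0$ there is $g'B \in \mathcal{B}(S)$ with $b(g'B) = b_u(gB)$; Lemma~\ref{lema:fixtipou} then gives $u(gB)gB = ng'B$ for some $n \in N(g'B)$, whence $b_{uu'}(gB) = n\,b_{u'}(g'B)$. Now take $h \in \lambda(g'B) \cap S$: since $h^{-k}nh^k \to 1$ and $\Bbb{D}_0$ is open containing $b_{u'}(g'B)$, for large $l$ the point $(h^{-l}nh^l)b_{u'}(g'B)$ lies in $\Bbb{D}_0$, and applying $h^l \in S$ (using $S$-invariance of $\Bbb{D}_0$) returns $n\,b_{u'}(g'B) = b_{uu'}(gB) \in \Bbb{D}_0$. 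This is the key idea you are missing: the contraction $h^{-k}nh^k \to 1$ lets you absorb the unipotent drift $n$ using only elements of $S$, without ever needing to conjugate elements of $S$ by representatives of $U$.
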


\begin{proof}
From Theorem \ref{teo:controleU}, $1$ belongs to $U(S)$. Let  $u,u' \in U(S)$. If $gB \in \B(S)$, then, by Theorem \ref{teo:controleU}, $b_{u}(gB),
b_{u'}(gB) \in \Bbb{D}_0$. By Theorem \ref{teo:controleU} and Lemma \ref{lema:fixtipou},
there exists $g'B \in \mathcal{B}(S)$ such that
\[
b(g'B)=b_u(gB)=b(u(gB)gB).
\]
By Lemma \ref{lema:fixtipou}, there exists $n \in N(g'B)$ such that $u(gB)gB = n g'B$ and so
\begin{equation*}
b_{uu'}(gB)=b_{u'}(u(gB)gB)=b_{u'}(ng'B)=nb_{u'}(g'B).
\end{equation*}
Now let $h \in \la(g'B) \cap S$. By Proposition \ref{prop:fixtipou}, we have
$h^{-k}nh^{k} \rightarrow 1$, when $k \rightarrow \infty$.
Then there exists $l \in \N$ such that $(h^{-l}nh^{l})b_{u'}(g'B) \in
\Bbb{D}_0$, since $b_{u'}(g'B)\in \Bbb{D}_0$ and $\Bbb{D}_0$ is an open subset. By the $S$-invariance of $\Bbb{D}_0$, it follows that
\begin{equation*}
b_{uu'}(gB)=nb_{u'}(g'B)=h^l(h^{-l}nh^{l})b_{u'}(g'B)\in \Bbb{D}_0.
\end{equation*}
Therefore, $uu' \in U(S)$. Since $U$ is finite, this concludes the proof.
\end{proof}

\begin{proposition}\label{prop:projU(S)}
The following claims hold:
    \begin{enumerate}
        \item If $u \in U(S)$, then $\pi(u) \in W(S)$.
        \item If $w \in W(S)$, then there exist $u \in U$ and $c \in C$ such that $uc \in U(S)$ and $\pi(uc)=w$.
        \item If $C(S)=\{c \in C:\Bbb{D}(c)=\Bbb{D}\}$, then $C(S)$ is a normal subgroup of $U(S)$ and $U(S)/C(S)=W(S)$.
    \end{enumerate}
\end{proposition}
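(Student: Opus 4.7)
The plan is to reduce everything to Theorem \ref{teo:controleU}(4), which says $\pi(\Bbb{D}(u)_0)=\Bbb{C}(\pi(u))_0$, together with Proposition \ref{prop:S-ics}(2), which says any two control sets lying over $\Bbb{C}(w)$ differ by an element of $C$ acting on the right. Combined, these two facts let me tie the $U$-labeling of control sets on $\Bbb{F}_0$ to the $W$-labeling on $\Bbb{F}$ in a purely group-theoretic way; (3) will then drop out from the first isomorphism theorem applied to $\pi|_{U(S)}$.

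For part (1), if $u\in U(S)$ then $\Bbb{D}(u)=\Bbb{D}=\Bbb{D}(1)$, so applying $\pi$ to the transitivity sets and using Theorem \ref{teo:controleU}(4) gives $\Bbb{C}(\pi(u))_0=\Bbb{C}(1)_0=\Bbb{C}_0$. Since distinct control sets have disjoint transitivity sets, this forces $\Bbb{C}(\pi(u))=\Bbb{C}$, i.e., $\pi(u)\in W(S)$. For part (2), start with $w\in W(S)$ and pick any $v\in U$ with $\pi(v)=w$ (such a $v$ exists because $\pi$ is surjective). Then
\[
\pi(\Bbb{D}(v)_0)=\Bbb{C}(w)_0=\Bbb{C}_0=\pi(\Bbb{D}_0),
\]
so $\Bbb{D}(v)$ and $\Bbb{D}$ both lie over $\Bbb{C}=\Bbb{C}(w)$. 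By Proposition \ref{prop:S-ics}(2), there is $c'\in C$ with $\Bbb{D}(v)=\Bbb{D}c'$. Setting $c=(c')^{-1}$ and invoking Theorem \ref{teo:controleU}(1), we get $\Bbb{D}(vc)=\Bbb{D}(v)c=\Bbb{D}c'c=\Bbb{D}$, so $vc\in U(S)$ and $\pi(vc)=\pi(v)=w$, giving the desired $u=v$ and $c$.

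For part (3), note that by definition $C(S)=C\cap U(S)$. Since $C$ is normal in $U$ (as recorded in Subsection \ref{subsec:canonicalobjects}), the intersection $C\cap U(S)$ is normal in $U(S)$, so $C(S)\trianglelefteq U(S)$. The restriction $\pi|_{U(S)}\colon U(S)\to W$ is a group homomorphism; its image lies in $W(S)$ by (1) and contains $W(S)$ by (2), hence equals $W(S)$; its kernel is $U(S)\cap\ker\pi=U(S)\cap C=C(S)$. The first isomorphism theorem then yields $U(S)/C(S)\cong W(S)$. No step here looks truly hard: the only point one must be careful about is that equality of transitivity sets implies equality of the corresponding (weak) control sets, which is standard since different control sets have disjoint transitivity sets; the remainder is bookkeeping that transports the projection $\Bbb{F}_0\to\Bbb{F}$ to the algebraic projection $U\to W$.
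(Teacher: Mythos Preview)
Your proof is correct and follows essentially the same route as the paper: both arguments use Theorem~\ref{teo:controleU}(4) to push the control-set labeling through $\pi$ for part~(1), then for part~(2) lift $w$ to some $v\in U$, observe via Theorem~\ref{teo:controleU}(4) that $\Bbb{D}(v)$ lies over $\Bbb{C}$, and apply Proposition~\ref{prop:S-ics}(2) together with Theorem~\ref{teo:controleU}(1) to correct by an element of $C$; part~(3) is in both cases the first isomorphism theorem applied to $\pi|_{U(S)}$. The only cosmetic difference is that the paper briefly invokes Proposition~\ref{prop:S-ics}(1) to note $\Bbb{D}(v)$ is invariant before applying part~(2), whereas you go straight to part~(2); and you justify normality of $C(S)$ via $C\trianglelefteq U$ rather than as a kernel---both are fine.
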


\begin{proof}
    If $u \in U$, then $\pi(\Bbb{D}(u)_0)=\Bbb{C}(\pi(u))_0$, by Theorem \ref{teo:controleU}. Thus, if $u \in U(S)$, then $\pi(u) \in W(S)$, by Theorem \ref{teo:controleU}, which shows 1.

    In order to show item 2, let $u \in U$ be such that $\pi(u)=w$. If $w \in W(S)$, then, by Theorem \ref{teo:controleU}, $\pi(\Bbb{D}(u)_0)=\Bbb{C}(w)_0=\Bbb{C}_0$ and so $\Bbb{D}(u)$ is an invariant control set, by Proposition \ref{prop:S-ics}. Thus, by Proposition \ref{prop:S-ics} and Theorem \ref{teo:controleU}, there exists $c \in C$ such that
    \[
    \Bbb{D}(uc)=\Bbb{D}(u)c=\Bbb{D}. 
    \]
    Therefore, $uc \in U(S)$ and $\pi(uc)=w$. 

    Finally, to show 3, let us note that the map $U(S)\to W(S)$ obtained by the restriction of $\pi: U \to W$ is a surjective group homomorphism and its kernel is $C \cap U(S)=C(S)$.
\end{proof}

The partial order $\leq$ on $U$ induces a relation on the set of right cosets $U(S)\backslash U$, saying that $U(S)u_1 \leq U(S)u_2$ if for every $u$ in $U(S)u_2$ there exists $u'$ in $U(S)u_1$ such that $u' \leq u$.

\begin{proposition}\label{prop:ordem}
    The relation $\leq$, defined above, is a partial order on $U(S)\backslash U$.
\end{proposition}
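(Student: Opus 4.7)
The plan is to verify the three defining properties of a partial order on $U(S)\backslash U$: reflexivity, transitivity and antisymmetry. The first two will follow routinely from the corresponding properties of the partial order $\leq$ on $U$ established in Proposition \ref{prop:orderU}, while the real work will go into antisymmetry, where the key tool will be the finiteness of $U$ combined with antisymmetry of $\leq$ on $U$.

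For reflexivity, given any coset $U(S)u$ and any representative $v \in U(S)u$, the same $v$ serves as a witness because $v \leq v$, so $U(S)u \leq U(S)u$. For transitivity, assuming $U(S)u_1 \leq U(S)u_2$ and $U(S)u_2 \leq U(S)u_3$, I would take an arbitrary $v \in U(S)u_3$, use the second inequality to obtain $w \in U(S)u_2$ with $w \leq v$, then use the first inequality to obtain $z \in U(S)u_1$ with $z \leq w$; transitivity of $\leq$ on $U$ then yields $z \leq v$, which is precisely what the definition of $U(S)u_1 \leq U(S)u_3$ demands.

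The main obstacle is antisymmetry. Assuming $U(S)u_1 \leq U(S)u_2$ and $U(S)u_2 \leq U(S)u_1$, the idea is to construct a descending sequence $a_0 \geq a_1 \geq a_2 \geq \cdots$ in $U$ whose terms alternate between the two cosets. Starting from any $a_0 \in U(S)u_2$, I apply $U(S)u_1 \leq U(S)u_2$ to obtain $a_1 \in U(S)u_1$ with $a_1 \leq a_0$; then I apply $U(S)u_2 \leq U(S)u_1$ to $a_1$ to obtain $a_2 \in U(S)u_2$ with $a_2 \leq a_1$; and so on, alternating between the two hypotheses at each step. Because $U$ is finite, some repetition $a_i = a_j$ with $i < j$ must occur. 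Antisymmetry of $\leq$ on $U$, applied to the sandwich $a_i \geq a_{i+1} \geq \cdots \geq a_j = a_i$, forces every element in this chain to equal $a_i$, and in particular $a_i = a_{i+1}$. Since consecutive terms of the alternating sequence lie in different cosets, this common value witnesses $U(S)u_1 = U(S)u_2$, establishing antisymmetry and finishing the proof.
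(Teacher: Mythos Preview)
Your proof is correct and follows essentially the same approach as the paper: reflexivity and transitivity are routine, and for antisymmetry both arguments exploit the finiteness of $U$ together with antisymmetry of $\leq$ on $U$ to produce an element common to both cosets. The only cosmetic difference is that the paper picks a $\leq$-minimal element of $U(S)u_2$ directly, whereas you build a descending alternating chain and invoke the pigeonhole principle to force a repetition---two equivalent packagings of the same finiteness argument.
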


\begin{proof}
    It is easy to see that $\leq$ is reflexive and transitive. Suppose that
    \[
    U(S)u_1 \leq U(S)u_2 ~~\text{and}~~ U(S)u_2 \leq U(S)u_1.
    \]
    Let $u$ be a minimal element in $U(S)u_2$. Thus, there exists $\tilde{u} \in U(S)u_1$ such that $\tilde{u}\leq u$. Furthermore, there exists $u'\in U(S)u_2$ such that $u' \leq \tilde{u}$. Hence, $u'\leq \tilde{u}\leq u$. By the minimality of $u$ in $U(S)u_2$, we have $u'=\widetilde{u}=u$ and, thus, $U(S)u_1\cap U(S)u_2\neq \emptyset$, which implies that $U(S)u_1 = U(S)u_2$. Therefore, the relation $\leq$ is skew-symmetric, this concludes the proof.
\end{proof}

\begin{lemma}\label{lema:ordercontrolN}
    Let $u, u' \in U$ and $gB, g'B \in \B(S)$. If there exists $n \in N(gB)$ such that $b_{u'}(g'B)=nb_u(gB)$, then $\Bbb{D}(u) \leq \Bbb{D}(u')$.
\end{lemma}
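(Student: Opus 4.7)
The plan is to exhibit a single element of $S$ that sends $b_u(gB)$ to $b_{u'}(g'B)$. By Theorem \ref{teo:controleU}, the hypothesis $gB, g'B \in \B(S)$ places $b_u(gB)\in \Bbb{D}(u)_0$ and $b_{u'}(g'B)\in \Bbb{D}(u')_0$, so the desired order $\Bbb{D}(u)\le \Bbb{D}(u')$ reduces (by the definition of the order on control sets) to proving $b_{u'}(g'B) \in S\cdot b_u(gB)$, i.e.\ $b_u(gB) \preceq b_{u'}(g'B)$.

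The candidates will be of the form $nh^k$, where $h \in \la(gB)\cap S$ is any element (which exists precisely because $gB \in \B(S)$). By Proposition \ref{prop:fixtipou}, $b_u(gB)=u(gB)gb_0$ is a fixed point of every element of $\la(gB)$, in particular of $h^k$ for all $k\ge 0$. Hence
\[
(nh^k)\cdot b_u(gB) \;=\; n\cdot h^k b_u(gB) \;=\; n\cdot b_u(gB) \;=\; b_{u'}(g'B)
\]
automatically, and the whole content of the lemma collapses to showing $nh^k \in S$ for some $k$.

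For this technical core I would use the contraction property of the second part of Proposition \ref{prop:fixtipou}. Setting $n_k := h^{-k}nh^k$, one has $n_k \to 1$ as $k\to\infty$. The key move is to rewrite
\[
nh^k \;=\; h^k n_k \;=\; h^{k-1}\cdot (h n_k).
\]
Since $h n_k \to h$ and $S$ is open (we may assume $S$ open without loss of generality, as the paper observes), we have $h n_k \in S$ for all sufficiently large $k$. The semigroup property of $S$ then yields $nh^k = h^{k-1}\cdot(hn_k) \in S\cdot S \subset S$, which together with the previous paragraph completes the argument.

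The main obstacle lies in this last rewriting. One cannot simply shrink $n_k$ into a fixed neighborhood of $1$ contained in $S$, because $1\notin S$ in general and the natural neighborhoods $h^{-k}S$ of $1$ depend on $k$. Absorbing a single copy of $h$ into the contracting factor, so that $hn_k$ converges to the \emph{interior} point $h\in S$, is precisely what circumvents this issue; the remaining $h^{k-1}$ is harmless because $S$ is a semigroup.
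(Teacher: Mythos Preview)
Your argument is correct. Both proofs hinge on the contraction $n_k:=h^{-k}nh^k\to 1$ from Proposition~\ref{prop:fixtipou}, but they exploit different ``open'' ingredients. The paper uses openness of the transitivity set: since $n_k b_u(gB)\to b_u(gB)\in\Bbb{D}(u)_0$, for some $l$ the point $x:=n_l b_u(gB)=h^{-l}n\,b_u(gB)$ lies in $\Bbb{D}(u)_0$, and then the single semigroup element $h^l\in S$ carries $x$ to $n b_u(gB)=b_{u'}(g'B)\in\Bbb{D}(u')_0$. You instead use openness of $S$ itself: the factorization $nh^k=h^{k-1}(hn_k)$ with $hn_k\to h\in S$ puts $nh^k$ in $S$ for large $k$, and this element sends $b_u(gB)$ directly to $b_{u'}(g'B)$. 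Your route is marginally more self-contained (it does not invoke openness of $\Bbb{D}(u)_0$), while the paper's route avoids having to manufacture an element of $S$ out of $n$; both are short and essentially equivalent in strength.
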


\begin{proof}
    Let $h \in \la(gB) \cap S$. By Proposition \ref{prop:fixtipou}, we have $h^{-k}nh^{k} \rightarrow 1$, when $k \rightarrow \infty$. Hence, there exists $l \in \N$ such that $(h^{-l}nh^{l})b_{u}(gB) \in \Bbb{D}(u)_0$, since $b_{u}(gB)\in \Bbb{D}(u)_0$, by Theorem \ref{teo:controleU}, and $\Bbb{D}(u)_0$ is open. By Proposition \ref{prop:fixtipou}, $b_u(gB)$ is a fixed point of $h^l$ and so
    \[
    h^{-l}nb_{u}(gB)=(h^{-l}nh^{l})b_{u}(gB) = b_{u}(h^{-l}nh^{l}gB)\in \Bbb{D}(u)_0. 
    \] 
    Taking $g=h^l \in S$, we have 
    \[ 
    gb_{u}(h^{-l}nh^{l}gB)=nb_{u}(gB)=b_{u'}(g'B) \in \Bbb{D}(u')_0, 
    \] 
    by Theorem \ref{teo:controleU}, which shows that $\Bbb{D}(u)\leq \Bbb{D}(u')$.
\end{proof}

\begin{lemma}\label{lemma:contrclasseparab}
	If  $s \in U(S)$ and $u \in U$, then $\Bbb{D}(su) = \Bbb{D}(u)$. 
\end{lemma}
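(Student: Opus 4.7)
The plan is to prove both inequalities $\Bbb{D}(u) \leq \Bbb{D}(su)$ and $\Bbb{D}(su) \leq \Bbb{D}(u)$ in the order on control sets, then invoke antisymmetry (if two control sets are related in both directions, then the transitivity sets contain pairs with $x \preceq x'$ and $x' \preceq x$, so they sit in the same weak class and the sets coincide).

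First, I would rewrite the target fixed points using Lemma \ref{lema:fixtipou}(2): for $gB \in \B(S)$, with $s(B) = s_*M_0A$, we have $b_{su}(gB) = b_u(s(gB)gB) = b_u(gs_*B)$. Because $s \in U(S)$ means $\Bbb{D}(s) = \Bbb{D}$, Theorem \ref{teo:controleU} forces $b(gs_*B) = gs_*b_0 = b_s(gB) \in \Bbb{D}_0$. Since $\Bbb{D}_0 = \{b(g''B) : g''B \in \B(S)\}$, choose $g'B \in \B(S)$ with $b(g'B) = b(gs_*B)$. By the last statement of Lemma \ref{lema:fixtipou}, $g'B = n \cdot gs_*B$ for some $n \in N(gs_*B)$, and a direct computation gives $b_u(g'B) = g'u_*b_0 = ngs_*u_*b_0 = n\, b_{su}(gB)$, so $b_{su}(gB) = n^{-1}b_u(g'B)$.

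Next I would verify that $n \in N(g'B)$ as well; since $N(g'B) = nN(gs_*B)n^{-1}$, membership reduces to $n \in N(gs_*B)$, which we already have. Now pick any $h \in \la(g'B) \cap S$ (available because $g'B \in \B(S)$). Proposition \ref{prop:fixtipou} tells us both that $h$ fixes $b_u(g'B)$ and that $h^{-k}n^{-1}h^k \to 1$ as $k \to \infty$. Writing
\[
b_{su}(gB) = n^{-1}b_u(g'B) = n^{-1}h^l b_u(g'B) = h^l\bigl(h^{-l}n^{-1}h^l\bigr)b_u(g'B),
\]
and using that $b_u(g'B) \in \Bbb{D}(u)_0$ (by Theorem \ref{teo:controleU}) together with openness of $\Bbb{D}(u)_0$, for all sufficiently large $l$ the vector $y_l := (h^{-l}n^{-1}h^l)b_u(g'B)$ lies in $\Bbb{D}(u)_0$. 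Since $b_{su}(gB) = h^l y_l \in S y_l$, we conclude $y_l \preceq b_{su}(gB)$ with $y_l \in \Bbb{D}(u)_0$ and $b_{su}(gB) \in \Bbb{D}(su)_0$, which is the definition of $\Bbb{D}(u) \leq \Bbb{D}(su)$.

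For the reverse inequality I would apply the just-proved statement to $s^{-1}$ (which is in $U(S)$ by Theorem \ref{teoU(S)subgrupo}) and to $su$ in place of $u$, yielding $\Bbb{D}(su) \leq \Bbb{D}(s^{-1}\cdot su) = \Bbb{D}(u)$. Antisymmetry of the order on control sets then gives $\Bbb{D}(su) = \Bbb{D}(u)$. The only subtle point is the bookkeeping of Weyl chambers and nilpotent groups — specifically checking $n \in N(g'B)$ so that Proposition \ref{prop:fixtipou} can be applied to contract $n^{-1}$ while fixing $b_u(g'B)$; once this identification is in place the argument is essentially the same contraction-plus-openness trick used in the proof of Theorem \ref{teoU(S)subgrupo}.
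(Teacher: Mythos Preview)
Your proof is correct and follows essentially the same route as the paper: write $b_{su}(gB)$ as $n^{\pm 1}b_u(g'B)$ for a suitable $g'B\in\B(S)$ and $n$ in the relevant nilpotent group, use the contraction argument from Proposition~\ref{prop:fixtipou} together with openness of $\Bbb{D}(u)_0$ to obtain $\Bbb{D}(u)\leq\Bbb{D}(su)$, and then apply the result to $s^{-1}$ for the reverse inequality. The only difference is that the paper isolates the contraction step as a separate lemma (Lemma~\ref{lema:ordercontrolN}) and cites it, whereas you unroll that argument inline; your explicit check that $n\in N(g'B)$ and your remark about antisymmetry simply make implicit steps in the paper's argument visible.
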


\begin{proof}
    Let $gB \in \mathcal{B}(S)$. By Theorem \ref{teo:controleU}, $b_{s}(gB) \in \Bbb{D}(s)_0=\Bbb{D}_0$. By Lemma \ref{lema:fixtipou} and Theorem \ref{teo:controleU}, this implies that there exists $g'B \in \mathcal{B}(S)$ such that
    \[
    b(g'B)=b_s(gB)=b(s(gB)gB).
    \]
    Hence there exists $n \in N(g'B)$ such that $s(gB)gB = n g'B$, so
    \begin{equation*}
    b_{su}(gB)=b_{u}(s(gB)gB)=b_{u}(ng'B)=nb_{u}(g'B),
    \end{equation*}
    from Lemma \ref{lema:fixtipou}. 
    Therefore, by Lemma \ref{lema:ordercontrolN}, $\Bbb{D}(u)\leq \Bbb{D}(su)$. Since $s^{-1} \in U(S)$ and $su \in U$ it follows that
    \[
    \Bbb{D}(su)\leq \Bbb{D}(s^{-1}(su))=\Bbb{D}(u),
    \]
    this concludes the proof.
\end{proof}

To establish a relationship between the order of control sets and the order on the set of cosets  $U(S)\backslash U$ we will use the following result.

\begin{proposition}\label{prop:fixtipow-fechoN}
    Let $u,u' \in U$ and $gB \in \B(S)$. Then $u \leq u'$ if and only if $b_{u}(gB) \in \mathrm{cl}(N(gB)b_{u'}(gB))$.
\end{proposition}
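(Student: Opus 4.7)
The plan is to reduce to the case $gB = B$ by $G$-equivariance, and then recognize the statement as a reformulation of the Schubert order.

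First, I would exploit the canonical formulas. By Lemma \ref{lema:fixtipou}, $b_u(gB) = g\, b_u(B)$ and $b_{u'}(gB) = g\, b_{u'}(B)$. By Lemma \ref{lema:aplicacoeslambda}, $N(gB) = gNg^{-1}$, so
\[
N(gB)\, b_{u'}(gB) = gNg^{-1} \cdot g\, b_{u'}(B) = g \cdot N\, b_{u'}(B).
\]
Since $g$ acts on $\Bbb{F}_0$ by a homeomorphism, $b_u(gB) \in \mathrm{cl}(N(gB)\, b_{u'}(gB))$ if and only if $b_u(B) \in \mathrm{cl}(N\, b_{u'}(B))$. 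Moreover, by Proposition \ref{prop:orderU}, the canonical order $u \leq u'$ is equivalent to $u(B) \leq u'(B)$. Hence the whole statement reduces to the case $gB = B$.

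Next, I would translate the right-hand side into a statement about Bruhat cells. Writing $u(B) = u_*(M_0A)(B)$, we have $b_u(B) = u_* b_0$, and therefore
\[
N\, b_u(B) = N u_* b_0 = \B(u),
\]
and similarly $N\, b_{u'}(B) = \B(u')$. Thus $\mathrm{cl}(N\, b_{u'}(B)) = \mathrm{cl}(\B(u')) = \S(u')$ by Proposition \ref{difeo from cube to cell}. The assertion to prove therefore becomes: $u \leq u'$ if and only if $b_u(B) \in \S(u')$.

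Finally, I would conclude by combining this with the definition of $\leq$. By the construction of the order on the canonical $U$ together with Proposition \ref{prop:orderU} and Theorem \ref{theo-order}, $u \leq u'$ is equivalent to $\S(u) \subset \S(u')$, i.e., $\mathrm{cl}(\B(u)) \subset \mathrm{cl}(\B(u'))$. The forward direction is immediate: if $\mathrm{cl}(\B(u)) \subset \mathrm{cl}(\B(u'))$, then in particular $b_u(B) \in \B(u) \subset \S(u')$. For the converse, one uses that $N$ acts on $\Bbb{F}_0$ by homeomorphisms, so $\S(u') = \mathrm{cl}(N\, b_{u'}(B))$ is $N$-invariant; hence $b_u(B) \in \S(u')$ implies $\B(u) = N\, b_u(B) \subset \S(u')$, and taking closures yields $\S(u) \subset \S(u')$, i.e., $u \leq u'$.

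The main obstacle is essentially bookkeeping: making sure the equivariance identities among $b_u$, $N$, and $\S$ are correctly applied across the canonical and base-point settings. Once that is set up, both implications follow formally from the $N$-invariance of $\S(u')$ and the Schubert-cell characterization of the order established in Section 3.
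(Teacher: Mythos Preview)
Your proposal is correct and follows essentially the same route as the paper: reduce by $G$-equivariance to $gB=B$, identify $N\,b_{u'}(B)=\B(u')$ and $\mathrm{cl}(N\,b_{u'}(B))=\S(u')$, and then use $N$-invariance of $\S(u')$ to pass between $b_u(B)\in\S(u')$ and $\S(u)\subset\S(u')$. The only cosmetic difference is that the paper phrases the converse via an explicit sequence $n_ku'(B)b_0\to u(B)b_0$ rather than invoking $N$-invariance abstractly, and it does not need to cite Theorem~\ref{theo-order} since $u\leq u'\Leftrightarrow\S(u)\subset\S(u')$ is the very definition of the order on $U$.
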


\begin{proof}
    Let us note that 
    $b_u(gB)=gu(B)b_0$, $b_{u'}(gB)=gu'(B)b_0$ and 
    \[
    \mathrm{cl}(N(gB)b_{u'}(gB))=\mathrm{cl}(gN(B)u'(B)b_0)=g\mathrm{cl}(N(B)u'(B)b_0).
    \]
    Furthermore, $u\leq u'$ if and only if
    \[
    \mathrm{cl}(N(B)u(B)b_0)=\S(u)\subset \S(u') = \mathrm{cl}(N(B)u'(B)b_0).
    \]
    Thus, if $u\leq u'$, then
    \[
    b_u(gB)=gu(B)b_0 \in g\mathrm{cl}(N(B)u'(B)b_0)=\mathrm{cl}(N(gB)b_{u'}(gB)).
    \]
    Conversely, if $b_u(gB) \in \mathrm{cl}(N(gB)b_{u'}(gB))$, then $u(B)b_0 \in \mathrm{cl}(N(B)u'(B)b_0)$. Thus there exists a sequence $(n_k)$ in $N(B)$ such that $n_ku'(B)b_0\to u(B)b_0$. Let $y=nu(B)b_0$ be an arbitrary point in $N(B)u(B)b_0$. Since $nn_ku'(B)b_0 \to nu(B)b_0=y$ it follows that $y \in \mathrm{cl}(N(B)u'(B)b_0)$ and thus $N(B)u(B)b_0 \subset \mathrm{cl}(N(B)u'(B)b_0)$. Then, 
    \[
    \S(u)=\mathrm{cl}(N(B)u(B)b_0)\subset \mathrm{cl}(N(B)u'(B)b_0)=\S(u').
    \]
    Therefore, $u\leq u'$.
\end{proof}

The following result establishes a relationship between the dynamical order of control sets of $S$ on $K/M_0$ with the order on the set of cosets $U(S)\backslash U$. This is a partial algebraic characterization, similar to what happens on the maximal flag manifold (see \cite{smo}).

\begin{theorem}
    If $U(S)u\leq U(S)u'$, then $\Bbb{D}(u') \leq \Bbb{D}(u)$. Furthermore, if $\Bbb{D}(u') \leq \Bbb{D}(u)$ and $\widehat{u}=s_1\cdots s_d \in U(S)u'$ is such that $\pi(\widehat{u})=r_1 \cdots r_d$ is a reduced expression, then $s_1^{k_1}\cdots s_d^{k_d} \in U(S)u$, for some $k_i\in \{0,1,2,3\}$.
\end{theorem}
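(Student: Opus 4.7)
The direct implication proceeds by a limit-and-translation argument of the same flavor as Lemmas~\ref{lema:ordercontrolN} and~\ref{lemma:contrclasseparab}. Unpacking $U(S)u \leq U(S)u'$ at the representative $u'$, there exists $u_1 \in U(S)u$ with $u_1 \leq u'$, so by Lemma~\ref{lemma:contrclasseparab} we have $\Bbb{D}(u_1) = \Bbb{D}(u)$, and by Proposition~\ref{prop:fixtipow-fechoN} a sequence $n_k \in N(gB)$ with $n_k b_{u'}(gB) \to b_{u_1}(gB) \in \Bbb{D}(u)_0$ for any chosen $gB \in \B(S)$. Picking $h \in \la(gB) \cap S$ and writing
\[
n_k b_{u'}(gB) = h^{l_k}\bigl(h^{-l_k} n_k h^{l_k}\bigr) b_{u'}(gB),
\]
with $l_k$ large enough that the contraction $h^{-l_k} n_k h^{l_k} \to 1$ (Proposition~\ref{prop:fixtipou}) drives $(h^{-l_k} n_k h^{l_k}) b_{u'}(gB)$ into the open neighborhood $\Bbb{D}(u')_0$ of $b_{u'}(gB)$, exhibits $n_k b_{u'}(gB) \in S \cdot \Bbb{D}(u')_0$. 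For large $k$ this point also lies in the open set $\Bbb{D}(u)_0$, yielding $\Bbb{D}(u') \leq \Bbb{D}(u)$.

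For the converse, the plan is to descend to the maximal flag manifold via the $G$-equivariant projection $K/M_0 \to K/M$ and apply the analogous (and complete) characterization on $\Bbb{F}$, then lift back. By Theorem~\ref{teo:controleU}(4), this projection sends $\Bbb{D}(v)_0$ onto $\Bbb{C}(\pi(v))_0$, so the hypothesis $\Bbb{D}(u') \leq \Bbb{D}(u)$ forces $\Bbb{C}(\pi(u')) \leq \Bbb{C}(\pi(u))$. The Bruhat-subexpression characterization of the control-set order in the flag case (see~\cite{smo}) then produces, from the reduced expression $\pi(\widehat{u}) = r_1 \cdots r_d \in W(S) \pi(u')$, an increasing subsequence $(i_1, \ldots, i_l)$ of $(1, \ldots, d)$ with $r_{i_1} \cdots r_{i_l}$ reduced and belonging to $W(S)\pi(u)$. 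Setting $k_i^{(0)} = 1$ for $i \in \{i_1, \ldots, i_l\}$ and $k_i^{(0)} = 0$ otherwise, Proposition~\ref{prop:projU(S)}(2) yields a $c \in C$ with $s_1^{k_1^{(0)}} \cdots s_d^{k_d^{(0)}} c \in U(S) u$.

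The remaining task is to absorb $c$ into the word so as to obtain an expression $s_1^{k_1} \cdots s_d^{k_d}$ with $k_i \in \{0,1,2,3\}$. Here one exploits that $C$ is abelian and, thanks to the complexification hypothesis (\cite{knapp}, Theorem~7.53), generated by the squares $\{s_\alpha^2 : \alpha \in \Sigma\}$, together with the normality of $C$ in $U$ (so that $s_i^{-1} c s_i \in C$ for each $i$). The four-valued range at each slot is precisely what is needed: the parity $k_i \in \{0,2\}$ versus $k_i \in \{1,3\}$ records the subexpression data (whether $i$ appears among the $i_j$), while the ambiguity $k_i = 0$ or $2$ (respectively $1$ or $3$) within each parity encodes the insertion of an $s_i^2 \in C$ factor needed to cover the adjustment by~$c$; note that $s_i^4 = 1$ reduces all such manipulations modulo~$4$.

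The hard part is precisely this absorption step when the reduced expression does not involve all simple reflections: then the generators $s_\alpha^2$ of $C$ for $\alpha \notin \{\alpha_1, \ldots, \alpha_d\}$ are not literally available in words of the form $s_1^{k_1} \cdots s_d^{k_d}$, so one must argue that the coset representative $\widetilde{v}c \in U(S)u$ can be chosen so that $c$ already lies in the subgroup of $C$ generated by $\{s_{\alpha_i}^2 : 1 \leq i \leq d\}$. Managing the non-commutation of these generators with the letters $s_{i_j}$ under the normal action, and ruling out spurious generators $s_\beta^2$ with $\beta$ outside the expression, is the obstruction that forces the statement to be only a \emph{partial} algebraic characterization and that widens the admissible exponent range from the $\{0,1,2\}$ of Theorem~\ref{theorem1} to $\{0,1,2,3\}$.
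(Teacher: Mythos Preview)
Your forward implication is fine and matches the paper's argument in spirit: both exploit openness of $\Bbb{D}(u)_0$ together with the contraction $h^{-l}nh^{l}\to 1$ to move from a point of $N(gB)b_{u'}(gB)$ inside $\Bbb{D}(u)_0$ to the relation $\Bbb{D}(u')\leq\Bbb{D}(u)$.

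The converse, however, has a genuine gap, and it is precisely the step you flag as ``the hard part''. Having obtained $s_1^{k_1^{(0)}}\cdots s_d^{k_d^{(0)}}c\in U(S)u$ from the flag-manifold result and Proposition~\ref{prop:projU(S)}(2), you must absorb $c\in C$ into a word $s_1^{k_1}\cdots s_d^{k_d}$ with $k_i\in\{0,1,2,3\}$. But even when $c$ lies in the subgroup generated by $\{s_{\alpha_i}^2:1\le i\le d\}$, commuting an $s_{\alpha_j}^2$ leftwards past the intervening $s_i^{k_i^{(0)}}$ replaces it by a conjugate $s_i s_{\alpha_j}^2 s_i^{-1}\in C$ which need not be any $s_{\alpha_l}^2$ at all; normality of $C$ gives you no control over which element of $C$ you land on. And when the reduced expression misses some simple roots, you have no argument that the coset $U(S)u$ contains a representative with $c$ in the right subgroup of $C$. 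You recognize both issues but do not resolve either; the proposal as written does not establish the converse.

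The paper avoids this lifting problem entirely by working directly on $K/M_0$ with a geometric descent. Given $\widehat{g}\in S$ with $\widehat{g}b_{\widehat{u}}(gB)=b_u(gB)$, project via $\pi_d\colon K/M_0\to K/K(\alpha_d)M_0$. The fibre through $b_u(gB)$ meets the fixed-point set of $h\in\la(gB)\cap S$ in at most four points $b_{us_d^j}(gB)$, $j=0,1,2,3$, one of which is an attractor for $h$ in that fibre. Since $\widehat{g}b_{\widehat{u}s_d^{-1}}(gB)$ lies in the same fibre, iterating $h$ sends it into one of the open sets $\Bbb{D}(us_d^{-k_d})_0$. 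This yields $g_{d-1}\in S$ with $g_{d-1}b_{\widehat{u}_{d-1}}(gB)=b_{us_d^{-k_d}}(gB)$, and one repeats with $\pi_{d-1}$. After $d$ steps one reaches $b_{us_d^{-k_d}\cdots s_1^{-k_1}}(gB)\in\Bbb{D}_0$, giving $s_1^{k_1}\cdots s_d^{k_d}\in U(S)u$ directly. The range $\{0,1,2,3\}$ is exactly the number of fixed points in each rank-one fibre, so it appears naturally rather than as an artifact of absorbing $C$; no algebraic bookkeeping with generators of $C$ is needed.
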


\begin{proof}
    Suppose $U(S)u \leq U(S)u'$. Then there exists $u_1$ in $U(S)u$ such that $u_1 \leq u'$. By Lemma \ref{lemma:contrclasseparab}, $\Bbb{D}(u_1)=\Bbb{D}(u)$. Let $gB \in \B(S)$. By Proposition \ref{prop:fixtipow-fechoN}, 
    \[
    b_{u_1}(gB) \in \mathrm{cl}\left(N(gB)b_{u'}(gB)\right).
    \]
    Since $b_{u_1}(gB) \in \Bbb{D}(u_1)_0$ it follows that $N(gB)b_{u'}(gB)\cap \Bbb{D}(u_1)_0\neq \emptyset$, since $\Bbb{D}(u_1)_0$ is open. Hence, there exists $n \in N(gB)$ such that $b_{u'}(ngB)=nb_{u'}(gB) \in \Bbb{D}(u_1)_0$. By Theorem \ref{teo:controleU}, there exists $g'B \in \B(S)$ such that $nb_{u'}(gB)=b_{u_1}(g'B)$. Therefore, by Lemma \ref{lema:ordercontrolN}, $\Bbb{D}(u')\leq \Bbb{D}(u_1)=\Bbb{D}(u)$.

    Conversely, suppose that $\Bbb{D}(u') \leq \Bbb{D}(u)$. Let $\widehat{u} \in U(S)u'$. By Lemma \ref{lemma:contrclasseparab},
    \[ 
    \Bbb{D}(\widehat{u})=\Bbb{D}(u')\leq \Bbb{D}(u). 
    \] 
    Let $gB \in \B(S)$. By Theorem \ref{teo:controleU}, $b_{\widehat{u}}(gB) \in \Bbb{D}(\widehat{u})_0$ and $b_{u}(gB) \in \Bbb{D}(u)_0$. Hence, there exists $\widehat{g} \in S$ such that $b_{u}(gB) =\widehat{g} b_{\widehat{u}}(gB)$. Let $\widehat{u}=s_1\cdots s_d$ be such that $\pi(\widehat{u})=r_1\cdots r_d$ is a reduced expression, where $r_i=r_{\alpha_i}$ and $\alpha_i$ is a simple root. Let $\pi_d:K/M_0 \to K/K(\alpha_d)M_0$ be the canonical projection. We have that $\widehat{g}$ takes the fiber of $b_{\widehat{u}}(gB)$ into the fiber of $b_{u}(gB)$. Furthermore, we have that $b_{u}(gB)$, $b_{us_d}(gB)$, $b_{us_d^2}(gB)$ and $b_{us_d^3}(gB)$ are in the same fiber of $\pi_d$ and are the only fixed points, in this fiber, of $h \in \la(gB)\cap S$. Note that if $m_d>1$, then $s_d^2=1$ and $s_d^3=s_d$, so that we have in fact only two fixed points in this case. In all cases one of these fixed points is an attractor of the action of $h$ in this fiber of $\pi_d$. Since $\widehat{g}b_{\widehat{u}s_d^{-1}}(gB)$ is in this fiber and since the transitivity sets are open, it follows that there exists $l \in \N$ such that
    \[
    h^l\widehat{g}b_{\widehat{u}_{d-1}}(gB) \in \Bbb{D}(u)_0 \cup \Bbb{D}(us_d)_0 \cup \Bbb{D}(us_d^2)_0 \cup \Bbb{D}(us_d^3)_0,
    \]
    where $\widehat{u}_i=s_1\cdots s_i$ and thus $\widehat{u}_{d-1}=\widehat{u}s_d^{-1}$. Therefore, there exists $g_{d-1} \in S$ such that 
    \[ 
    g_{d-1}b_{\widehat{u}_{d-1}}(gB)=b_{us_d^{-k_d}}(gB) 
    \] 
    for some $k_d\in \{0,1,2,3\}$. Applying the same argument with $\pi_{d-1}$ in place of $\pi_d$ and $b_{\widehat{u}_{d-1}}(gB)$ in place of $b_{\widehat{u}}(gB)$, there exists $g_{d-2} \in S$ such that
    \[
    g_{d-2}b_{\widehat{u}_{d-2}}(gB)=b_{us_d^{-k_d}s_{d-1}^{-k_{d-1}}}(gB)
    \]
    for some $k_{d-1}\in \{0,1,2,3\}$. Continuing,
    \[
    b_{us_d^{-k_d}s_{d-1}^{-k_{d-1}}\cdots s_{1}^{-k_{1}}}(gB)=g_0b_{\widehat{u}_0}(gB)=g_0b(gB) \in \Bbb{D}_0.
    \]
    Therefore, $\widetilde{u}=us_d^{-k_d}s_{d-1}^{-k_{d-1}}\cdots s_{1}^{-k_{1}} \in U(S)$ so that
    \[
    s_1^{k_1}\cdots s_d^{k_d}=\widetilde{u}^{-1}u \in U(S)u,
    \]
    this concludes the proof.
\end{proof}

Finally, the next result establishes a bijection between the control sets of $S$ on $K/M_0$ with the set of cosets $U(S)\backslash U$.

\begin{corollary}\label{cor:cardconjcontK}
    The map given by $U(S)u \mapsto \Bbb{D}(u)$ is a well-defined and a bijective map between the sets of cosets $U(S)\backslash U$ and the control sets of $S$ on $K/M_0$.
\end{corollary}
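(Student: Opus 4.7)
The plan is to verify well-definedness, surjectivity, and injectivity of the map $U(S)u \mapsto \mathbb{D}(u)$. Well-definedness is an immediate application of Lemma \ref{lemma:contrclasseparab}: if $u' = su$ with $s \in U(S)$, then $\mathbb{D}(u') = \mathbb{D}(su) = \mathbb{D}(u)$. Surjectivity is contained in Theorem \ref{teo:controleU}, since every control set of $S$ on $\mathbb{F}_0 \simeq K/M_0$ is of the form $\mathbb{D}(u)$ for some $u \in U$.

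For injectivity, I assume $\mathbb{D}(u) = \mathbb{D}(u')$ and aim to produce $s \in U(S)$ and $c \in C(S)$ such that $u' = csu$. The approach is to descend to the flag manifold $\mathbb{F}$ and then lift back to $U$. By Theorem \ref{teo:controleU}(4), the hypothesis yields $\mathbb{C}(\pi(u)) = \mathbb{C}(\pi(u'))$, and the analogous parametrization of control sets on $\mathbb{F}$ by $W(S) \backslash W$ (established in \cite{smo}) gives $\pi(u')\pi(u)^{-1} \in W(S)$. Using Proposition \ref{prop:projU(S)}(2), I choose a lift $s \in U(S)$ with $\pi(s) = \pi(u')\pi(u)^{-1}$ and set $c := u'(su)^{-1}$, which lies in $\ker\pi = C$; thus $u' = csu$.

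The decisive step is to show $c \in C(S)$, for then $cs \in U(S)$ and consequently $u' \in U(S)u$. By Lemma \ref{lemma:contrclasseparab} we have $\mathbb{D}(su) = \mathbb{D}(u)$, so Theorem \ref{teo:controleU}(1) applied with $v = su$ yields
\[
\mathbb{D}(u') = \mathbb{D}(csu) = \mathbb{D}(su)\cdot (su)^{-1}c(su) = \mathbb{D}(u)\cdot u^{-1}(s^{-1}cs)u.
\]
Combining items (1) and (3) of Theorem \ref{teo:controleU} shows that the stabilizer of $\mathbb{D}(u)$ under the right $C$-action is precisely $u^{-1}C(S)u$, so the equality $\mathbb{D}(u') = \mathbb{D}(u)$ forces $s^{-1}cs \in C(S)$. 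Since $C(S)$ is normal in $U(S)$ by Proposition \ref{prop:projU(S)}(3) and $s \in U(S)$, this gives $c \in sC(S)s^{-1} = C(S) \subseteq U(S)$, completing the injectivity argument.

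The main obstacle I foresee is the computation in the last paragraph: the formulas in Theorem \ref{teo:controleU}(1) twist the right $C$-action by conjugation by $u$, so one must carefully identify the stabilizer of $\mathbb{D}(u)$ as $u^{-1}C(S)u$ via items (1) and (3), and then exploit the normality of $C(S)$ in $U(S)$ to collapse the conjugate $s^{-1}cs$ back into $C(S)$. Everything else is bookkeeping of ingredients already assembled in Section \ref{sec:preliminaries} and the preceding results of Section \ref{sec:controlsets}.
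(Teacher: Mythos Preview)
Your proof is correct, and the well-definedness and surjectivity steps match the paper exactly. The injectivity argument, however, takes a genuinely different route. The paper proves injectivity by a counting argument: since $W = U/C$ and $W(S) = U(S)/C(S)$ (Proposition \ref{prop:projU(S)}), one has $|U(S)\backslash U| = |W(S)\backslash W|\cdot|C(S)\backslash C|$, and this product is already known to equal the number of control sets on $K/M_0$ by Theorem 3.25 of \cite{patrao-santos}; injectivity then follows from surjectivity between finite sets of equal cardinality. Your approach instead constructs the witness $cs \in U(S)$ directly, by first descending to $\F$ to obtain $s \in U(S)$ with the correct Weyl projection, and then using the twisted $C$-action formulas of Theorem \ref{teo:controleU} together with normality of $C(S)$ in $U(S)$ to force $c \in C(S)$.

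The paper's argument is shorter but imports a cardinality result proved elsewhere. Your argument is more self-contained within the structural lemmas already assembled in this paper (modulo the classical $W(S)\backslash W$ parametrization on $\F$, which is in \cite{smt,smo}), and it is constructive: given $\Bbb{D}(u) = \Bbb{D}(u')$ one explicitly exhibits the element of $U(S)$ relating $u$ and $u'$. The stabilizer computation you flag as the delicate step is indeed the heart of the matter, and your handling of it is sound: Theorem \ref{teo:controleU}(1),(3) combine to give $\mathrm{Stab}_C(\Bbb{D}(u)) = u^{-1}C(S)u$, and the normality of $C(S)$ in $U(S)$ then closes the loop.
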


\begin{proof}
    The map is well-defined by Lemma \ref{lemma:contrclasseparab} and it is surjective by Theorem \ref{teo:controleU}. Since $W=U/C$ and $W(S)=U(S)/C(S)$, by Proposition \ref{prop:projU(S)}, it follows that the number of elements of $U(S)\backslash U$ is the product of the cardinalities of $W(S)\backslash W$ and $C(S)\backslash C$, which is the number of control sets of $S$ on $K/M_0$, by Theorem 3.25 of \cite{patrao-santos}, which concludes the proof.
\end{proof}

\end{document}